\theoremstyle{plain}
\newtheorem{Theorem}{Theorem}[section]
\newtheorem{Lemma}{Lemma}[section]
\newtheorem{Corollary}{Corollary}[section]
\newtheorem{Definition}{Definition}[section]
\newtheorem{Example}{Example}[section]
\theoremstyle{remark}
\newtheorem{remark}{Remark}
\numberwithin{equation}{section}
\numberwithin{figure}{section}
\numberwithin{remark}{section}
\begin{document}
\begin{sloppypar}

	\title{Improved regularity estimates for degenerate or singular fully nonlinear dead-core systems and H\'{e}non-type equations}
	
	\author{Jiangwen Wang}
	\address{School of Mathematics and Shing-Tung Yau Center of Southeast University, Southeast University, Nanjing 211189, P.R. China}
	\email{jiangwen\_wang@seu.edu.cn}	

	\author{Feida Jiang$^*$}
	\address{School of Mathematics and Shing-Tung Yau Center of Southeast University, Southeast University, Nanjing 211189, P.R. China; Shanghai Institute for Mathematics and Interdisciplinary Sciences, Shanghai 200433, P.R. China}
	\email{jiangfeida@seu.edu.cn}	
	
	\subjclass[2010]{35R35, 35J70, 35J75, 35D40}

	\date{\today}
	\thanks{*corresponding author}
	
		\keywords{dead-core systems; improved regularity; viscosity solutions; free boundary; H\'{e}non-type equations}

	\begin{abstract}
	In this paper, we study the degenerate or singular fully nonlinear dead-core systems coupled with strong absorption terms. We establish several properties, including improved regularity of viscosity solutions along the free boundary, non-degeneracy, a measure estimate of the free boundary, Liouville-type results, and the behavior of blow-up solution. We also derive sharp regularity estimates for viscosity solutions to H\'{e}non-type equations with a degenerate weight and strong absorption, governed by a degenerate fully nonlinear operator. Our results are new even for the model equations involving degenerate Laplacian operators.
\end{abstract}

	\maketitle		

\section{Introduction}
The first topic of this paper is to study the regularity properties of viscosity solutions to degenerate or singular fully nonlinear dead-core systems with strong absorption terms
\begin{equation}\tag{{\bf DCS}}
\label{DCP}
\left\{
     \begin{aligned}
     & |Du|^{p} F(D^{2}u,x) = (v_{+})^{\lambda_{1}}   \quad \text{in} \ \ B_{1}      \\
     &  |Dv|^{q} G(D^{2}v,x) =  (u_{+})^{\lambda_{2}}   \quad \text{in} \ \ B_{1},    \\
     \end{aligned}
     \right.
\end{equation}
where $ u_{+} = \max \{u,0\} $, $ v_{+} = \max \{v,0\} $, and $ F $ and $ G$ are fully nonlinear uniformly elliptic operator with uniformly continuous coefficients, namely,

\label{Section1:ass1} {\bf (A1) (Uniformly ellipticity of $ F, G$).} For any $  \mathrm{N} \geq 0 $, $ \mathrm{M} \in \mathrm{Sym}(n)$, $ x,y \in B_{1} $, there holds
\begin{equation}\label{1a}
\lambda ||\mathrm{N}|| \leq F(\mathrm{M}+\mathrm{N}, x) - F(\mathrm{M}, x) \leq \Lambda ||\mathrm{N}|| \tag{{\bf 1a}}
\end{equation}
and
\begin{equation*}
\lambda ||\mathrm{N}|| \leq G(\mathrm{M}+\mathrm{N}, x) - G(\mathrm{M}, x) \leq \Lambda ||\mathrm{N}||,
\end{equation*}
with $ 0 < \lambda \leq \Lambda $.

{\bf (A2) (Continuity on the coefficients of $ F, G $).}

--\label{Section1:ass2a} {\bf (A2a).} We shall assume
\begin{equation}\label{2a}
   | F(\mathrm{M}, x) - F(\mathrm{M}, y)  | \leq C\omega_{F}(|x-y|)(1+||\mathrm{M}||)    \tag{{\bf 2a}}
\end{equation}
and
\begin{equation*}
  | G(\mathrm{M}, x) - G(\mathrm{M}, y)  | \leq C\omega_{G}(|x-y|)(1+||\mathrm{M}||)
\end{equation*}
for all $ \mathrm{M} \in  \mathrm{Sym}(n)  $ and $ x,y \in B_{1} $, where $ C >0 $ is a constant and $\omega_{F},  \omega_{G} $ are modulus of continuity.

--\label{Section1:ass2b} {\bf (A2b).} We impose stronger assumptions on the coefficients of $ F, G $ than those in    \hyperref[Section1:ass2a]{\bf (A2a)}. Specifically, we assume
\begin{equation*}
  | F(\mathrm{M}, x) - F(\mathrm{M}, y)  | \leq C\omega_{F}(|x-y|)(1+||\mathrm{M}||)^{\tau}
\end{equation*}
and
\begin{equation*}
  | G(\mathrm{M}, x) - G(\mathrm{M}, y)  | \leq C\omega_{G}(|x-y|)(1+||\mathrm{M}||)^{\tau},
\end{equation*}
for all $ \mathrm{M} \in  \mathrm{Sym}(n)  $ and $ x,y \in B_{1} $, where $ 0 < \tau < \overline{\alpha} $ and $ C >0 $ is a constant. Here $ \omega_{F} $ and $ \omega_{G} $ denote modulus of continuity that are H\"{o}lder continuity. Namely, there exist constants $ K_{1}, K_{2}> 0 $ and $ \overline{\alpha} \in (0,1) $ such that
\begin{equation*}
  \omega_{F}(|x-y|) \leq K_{1}|x-y|^{\overline{\alpha}} \quad \mathrm{and}  \quad  \omega_{G}(|x-y|) \leq K_{2}|x-y|^{\overline{\alpha}}.
\end{equation*}

In addition, we also impose certain necessary conditions on $ p, q, \lambda_{1} $ and $ \lambda_{2} $.

\label{Section1:ass3} {\bf (A3) ($ p,q $ and order of reaction $ \lambda_{1}, \lambda_{2}$).} We suppose that
\begin{equation*}
      -1 <  p, q < \infty, \quad \lambda_{1}, \lambda_{2} \geq 0 \quad  \mathrm{and} \quad \lambda_{1}\lambda_{2} < (1+p)(1+q).
\end{equation*}

In recent years, reaction-diffusion equations characterized by significant absorption terms have attracted considerable attention due to their applications in various fields, including physics, material sciences and chemical engineering. A key aspect of these models is the presence of dead-core regions where the density of a substance (or gas) drops to zero (cf. \cite{A175, A275, BSS84, HM85} for some motivational works). When the density of one substance is affected by the density of another, the phenomenon is described as a dead-core system. An interesting example is given by
\begin{equation*}
\left\{
     \begin{aligned}
     & F(D^{2}u,x) =  f(u, v, x)   \quad      \text{in}    \ \  B_{1}          \\
     &   G(D^{2}v,x) = g(u, v, x)   \quad      \text{in}   \ \  B_{1},            \\
     \end{aligned}
     \right.
\end{equation*}
where the operators $ F, G: \mathrm{Sym}(n) \times B_{1} \rightarrow \mathbb{R} $ and $ f, g :\mathbb{R} \times \mathbb{R} \times B_{1} \rightarrow  \mathbb{R} $ represent the convection terms of the model.

The study of dead-core free boundary problems, such as \eqref{DCP}, has attracted significant interest over the past four decades. Extensive research has been conducted on various aspects of this topic, including the existence of solutions, the characterization of dead-core sets, and the asymptotic behavior of solutions (see, for instance, Pucci--Serrin \cite{PS06} and Bandle {\em et al.}\cite{BV03}). Although there exists a substantial body of literature addressing dead-core problems in the divergence form, the quantitative properties associated with nonlinear systems remain comparatively underexplored. Thereby this has served as the primary motivation for the research in the current paper.

The system \eqref{DCP} has been analyzed in three specific cases, namely,

\label{H1} {\bf (H1)}. $ p=q=0, u=v, F=G \ \text{and} \ \lambda_{1} = \lambda_{2} $;

\label{H2} {\bf (H2)}. $ -1< p=q < \infty, F=G \ \text{and} \ \lambda_{1} = \lambda_{2} $;

\label{H3} {\bf (H3)}. $ p = q =0 $.

The system \eqref{DCP} in the \hyperref[H1]{\bf (H1)} case corresponds to the single fully nonlinear equation with strong absorption term
\begin{equation}\label{Intro:eq2}
F(D^{2}u,x) = (u_{+})^{\lambda_{1}} \ \ \text{in} \ \ B_{1}.
\end{equation}
The Krylov--Safonov regularity theory in \cite{CC95} can deduce that viscosity solutions of \eqref{Intro:eq2} are locally of the class $ C^{1,\alpha} $, for some $ \alpha \in (0,1) $. Teixeira in \cite{T16}, using an improved flatness lemma, scaling techniques and iterative argument, derived the optimal $ C^{\frac{2}{1-\lambda_{1}}}$ regularity along the free boundary $ \partial \{ u >0\} $. Later, Ara\'{u}jo {\em et al.} \cite{ALT16} employed the same methods as \cite{T16} to establish the sharp $ C^{\frac{4}{3-\lambda_{1}}}$ regularity for the dead-core problem ruled by the infinity Laplacian, $
\Delta_{\infty}u(x) = (u_{+})^{\lambda_{1}} \ \ \text{in} \ \ B_{1} $, where $ \lambda_{1} \in [0, 3) $ is a constant.

For the \hyperref[H2]{\bf (H2)} case we noticed that \eqref{DCP} is related to the degenerate or singular fully nonlinear equation
\begin{equation}\label{Intro:eq3}
   |Du|^{p}F(D^{2}u,x) = (u_{+})^{\lambda_{1}} \ \ \text{in} \ \ B_{1},
\end{equation}
where $ 0 \leq \lambda_{1} < 1+p $. Based on a novel flatness improvement in \cite{SLR21}, da Silva {\em et al.} also proved a higher $ C^{\frac{2+p}{1+p-\lambda_{1}}}$ regularity for dead-core solution to \eqref{Intro:eq3} across the free boundary, and established the Liouville type theorem. Clearly, the result obtained in \cite{T16} is a special case of \cite{SLR21}. In the case when $ p \geq 0, -1 < \lambda_{1} < 1+p $ in \eqref{Intro:eq3}, Teixeira \cite{T18} obtained quantitative regularity estimates for non-negative limiting solutions to \eqref{Intro:eq3}. In the parabolic setting, we refer the readers to da Silva {\em et al.}\cite{SS18, SOS18, SO19}.

For the \hyperref[H3]{\bf (H3)} case the system \eqref{DCP} reduces to a class of fully nonlinear elliptic system without degeneracy or singularity. This model was first studied by Ara\'{u}jo {\em et al.}\cite{AT24}. Unlike the single equations \eqref{Intro:eq2}--\eqref{Intro:eq3}, when solving the system, one may have several difficulties, such as, lacking the comparison principle and being unable to use Perron's method in the framework of \cite{T16, SLR21}. Nevertheless, the authors in \cite{AT24} overcame these obstacles using Schaefer's fixed point theorem, and obtained some weak geometric results such as non-degeneracy, Liouville type results and the measure estimate of free boundary $  \partial \{|(u,v)| >0\}  $, where $ |(u,v)|:= (u_{+})^{\frac{1}{1+\lambda_{1}}} + (v_{+})^{\frac{1}{1+\lambda_{2}}}  $.

\vspace{1mm}

Motivated by these works \cite{T16, ALT16, T18, SLR21, AT24}, the primary objective of the first part of this work is to investigate geometric and analytic properties of nonnegative solutions of \eqref{DCP}. To the best of our knowledge, there are very few results for properties of viscosity solutions to \eqref{DCP}, even for the degenerate or singular Laplacian systems, i.e., $ F= G = \Delta $ in \eqref{DCP}. Moreover, our work unifies and extends the previous results from \hyperref[H1]{\bf (H1)}--\hyperref[H3]{\bf (H3)} in a broader setting.

We believe that \hyperref[H1]{\bf (H1)}--\hyperref[H3]{\bf (H3)}, along with its generalization in \eqref{DCP}, has a fundamental connection to various free boundary problems. These connections span both variational (cf. Alt-Phillips \cite{AP86}, Friedman-Phillips \cite{FP84} and De Silva-Savin \cite{1DeS23, 2DeS23, 3DeS23}) and non-variational (cf. Wu-Yu \cite{WY22}) settings. Therefore the study of \eqref{DCP} is of significant mathematical interest.

The first main result of our article concerns sharp and improved regularity of dead-core solutions to \eqref{DCP} along the free boundary $  \partial \{|(u,v)| >0\}  $, where
\begin{equation*}
  |(u,v)|:= (u_{+})^{\frac{2}{(1+q)(2+p)+\lambda_{1}(2+q)}} + (v_{+})^{\frac{2}{(1+p)(2+q)+\lambda_{2}(2+p)}}.
\end{equation*}

For simplicity, hereafter we say $ (u,v) \geq 0 $ if $ u, v \geq 0 $.

\begin{Theorem}[{\bf Improved regularity along free boundary}]\label{Thm1}
Suppose that \hyperref[Section1:ass1]{\bf (A1)}, \hyperref[Section1:ass2a]{\bf (A2a)} and \hyperref[Section1:ass1]{\bf (A3)} hold. Let $ (u,v) \geq 0 $ be a bounded viscosity solution of \eqref{DCP} and $ x_{0} \in \partial \{ |(u,v)|>0 \} \cap B_{1/2} $, then there exists a constant $ C = C(p, q, \lambda_{1}, \lambda_{2}, ||u||_{L^{\infty}(B_{1})}, ||v||_{L^{\infty}(B_{1})}) >0 $ such that
\begin{equation*}
  |(u,v)| \leq C |x-x_{0}|^{\frac{2}{(1+p)(1+q)-\lambda_{1}\lambda_{2}}}
\end{equation*}
for any $ x \in B_{1/8} $. Particularly, we have
\begin{equation*}
  |u(x)| \leq C|x-x_{0}|^{\frac{(1+q)(2+p)+\lambda_{1}(2+q)}{(1+p)(1+q)-\lambda_{1}\lambda_{2}}} \ \ \text{and} \ \ |v(x)| \leq C|x-x_{0}|^{\frac{(1+p)(2+q)+\lambda_{2}(2+p)}{(1+p)(1+q)-\lambda_{1}\lambda_{2}}}.
\end{equation*}
\end{Theorem}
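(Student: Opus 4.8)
The plan is to follow the now-standard flatness-improvement/iteration scheme of Teixeira \cite{T16} and da Silva et al.\ \cite{SLR21}, adapted to the coupled system \eqref{DCP}. First I would reduce to a normalized setting: by translating so that $x_0 = 0$ and rescaling $u$, $v$ (exploiting the precise scaling exponents built into the definition of $|(u,v)|$, which is exactly why those fractional powers appear), I may assume $u,v$ are bounded by $1$ in $B_1$, that the coefficients' moduli $\omega_1,\omega_2$ are as small as we like, and that $0$ is a free boundary point, i.e.\ $u(0)=v(0)=0$. The claim then amounts to showing there is a universal $C>0$ with $u(x)\le C|x|^{\beta_1}$ and $v(x)\le C|x|^{\beta_2}$ near $0$, where $\beta_1 = \frac{(1+q)(2+p)+\lambda_1(2+q)}{(1+p)(1+q)-\lambda_1\lambda_2}$ and $\beta_2$ is its symmetric counterpart; note $\beta_1$ and $\beta_2$ are chosen precisely so that $\tfrac{\beta_1-2}{\beta_1}\cdot\lambda_1$-type balances hold, i.e.\ the system is invariant under the corresponding anisotropic dilation.

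The core is a discrete iteration: I would prove that there exist universal $\mu\in(0,1)$ and $\rho\in(0,1)$ such that if $(u,v)\ge 0$ solves \eqref{DCP} in $B_1$ with $\sup_{B_1}u\le 1$, $\sup_{B_1}v\le 1$, $u(0)=v(0)=0$, and the oscillation data are small, then
\begin{equation*}
\sup_{B_\rho} u \le \rho^{\,\beta_1}\quad\text{and}\quad \sup_{B_\rho} v \le \rho^{\,\beta_2}.
\end{equation*}
This single step, rescaled via $u_k(x) = \rho^{-k\beta_1}u(\rho^k x)$, $v_k(x)=\rho^{-k\beta_2}v(\rho^k x)$ — which again solve a problem of the same form because of the scaling-compatibility of the exponents, with right-hand sides multiplied by powers of $\rho$ that are $\le 1$ — iterates to give $\sup_{B_{\rho^k}}u \le \rho^{k\beta_1}$, $\sup_{B_{\rho^k}}v\le\rho^{k\beta_2}$ for all $k$, and a standard interpolation over dyadic-type annuli converts this into the pointwise decay $|u(x)|\le C|x|^{\beta_1}$, $|v(x)|\le C|x|^{\beta_2}$; summing the two normalized pieces yields the stated bound on $|(u,v)|$ with exponent $\frac{2}{(1+p)(1+q)-\lambda_1\lambda_2}$.

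The flatness/smallness step itself I would establish by compactness and contradiction. Suppose it fails: there are sequences $(u_j,v_j)$ of solutions with small data but $\sup_{B_\rho}u_j > \rho^{\beta_1}$ (or the $v$-analogue). Using the uniform $C^{1,\alpha}_{\mathrm{loc}}$ estimates available from the degenerate/singular Krylov–Safonov-type theory (Imbert–Silvestre, Birindelli–Demengel) applied to each equation of \eqref{DCP} — valid because the right-hand sides $(v_{j+})^{\lambda_1}$, $(u_{j+})^{\lambda_2}$ are uniformly bounded — I extract a subsequence converging locally uniformly to a limit $(u_\infty, v_\infty)\ge 0$ which solves a \emph{homogeneous} limiting system $|Du_\infty|^p F_\infty(D^2 u_\infty)=(v_{\infty+})^{\lambda_1}$, $|Dv_\infty|^q G_\infty(D^2 v_\infty)=(u_{\infty+})^{\lambda_2}$ with constant-coefficient operators, still satisfying $u_\infty(0)=v_\infty(0)=0$. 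The key is then a Liouville/classification fact for this limiting system (the kind of Liouville-type result advertised in the abstract): its nonnegative solutions vanish to at least the expected order at a free boundary point, which contradicts $\sup_{B_\rho}u_j>\rho^{\beta_1}$ once $\rho$ is fixed small and $j$ large. I expect the main obstacle to be exactly this step, because the system is coupled and lacks a comparison principle, so one cannot directly import the single-equation arguments of \cite{T16,SLR21}; instead I would run the two estimates in tandem, feeding the decay of $v$ into the equation for $u$ and vice versa (a bootstrap on the pair of exponents), and handle the degeneracy $|Du|^p$ either by the $C^{1,\alpha}$ gradient bound away from critical points or by noting that near a free boundary point the relevant quantities are controlled by the scaling — this interplay, rather than any single estimate, is where the care is needed.
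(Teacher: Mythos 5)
Your high-level strategy (normalize, prove a one-step flatness improvement by compactness, iterate under anisotropic scaling, interpolate over dyadic scales) is the right family of arguments and matches the paper in spirit. But the compactness step as you set it up has a genuine gap, and it is exactly the gap the paper's proof is engineered to avoid.

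The issue is this. If you rescale with the critical exponents $\beta_1=\frac{(1+q)(2+p)+\lambda_1(2+q)}{(1+p)(1+q)-\lambda_1\lambda_2}$ and $\beta_2=\frac{(1+p)(2+q)+\lambda_2(2+p)}{(1+p)(1+q)-\lambda_1\lambda_2}$, the system \eqref{DCP} is \emph{exactly} scale-invariant: the right-hand sides come back with coefficient equal to $1$, not strictly less than $1$. So in your contradiction/compactness argument the subsequential limit $(u_\infty,v_\infty)$ satisfies the same fully coupled system
$|Du_\infty|^pF_\infty(D^2u_\infty)=(v_{\infty+})^{\lambda_1}$, $|Dv_\infty|^qG_\infty(D^2v_\infty)=(u_{\infty+})^{\lambda_2}$ (with frozen coefficients), and the only thing you know is $u_\infty(0)=v_\infty(0)=0$ and $0\le u_\infty,v_\infty\le 1$. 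You then propose to conclude by ``a Liouville/classification fact for this limiting system'' asserting that nonnegative solutions vanish to at least the expected order at a free boundary point. But that \emph{is} Theorem~\ref{Thm1}; the Liouville-type results of Section~\ref{Section 6} are deduced \emph{from} Theorem~\ref{Thm1}, not the other way around. As written, your plan is circular at precisely the point where the argument must close.

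The paper breaks this circularity with a small auxiliary parameter. In Lemma~\ref{Se4:lem1}, the flatness statement is proved for a \emph{modified} system in which only one of the two equations carries a small prefactor $\kappa$: $|Du|^pF(D^2u,x)=\kappa(v_+)^{\lambda_1}$, $|Dv|^qG(D^2v,x)=(u_+)^{\lambda_2}$. Taking $\kappa=1/k\to 0$ in the compactness argument kills the right-hand side of the \emph{first} equation in the limit; the cutting Lemma~\ref{Section2:lem3} and \cite[Lemma 6]{LS13} reduce it to $F_\infty(D^2u_\infty,x)=0$ (this is where the $|Du|^p$ degeneracy requires care), and the standard strong maximum principle for uniformly elliptic operators then forces $u_\infty\equiv 0$. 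This in turn annihilates the right-hand side of the second equation, yielding $v_\infty\equiv 0$ and the desired contradiction. No Liouville theorem for the coupled system is used — only the strong maximum principle for the homogeneous fully nonlinear equation, applied twice in sequence. The proof of Theorem~\ref{Thm1} then puts one into this position by an explicit preliminary rescaling $u_1=\widehat A\, u(\widehat B\,x)$, $v_1=\widehat A\, v(\widehat B\,x)$, choosing $\widehat A,\widehat B$ so that the coefficient of $(v_1^+)^{\lambda_1}$ drops below $\kappa_\mu$ while the other coefficient equals $1$; the dyadic scaling that follows is scale-invariant, so the $\kappa\le\kappa_\mu$ threshold is preserved and the iteration runs.

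So the missing idea in your proposal is the introduction of this small decoupling parameter in the flatness lemma, which lets the strong maximum principle do the classification of the blow-up limit without any a priori Liouville theorem for the coupled system. Your closing remarks about ``running the two estimates in tandem'' and ``bootstrapping on the pair of exponents'' gesture at the right difficulty but do not supply the mechanism; absent the $\kappa$-trick (or an independent Liouville theorem you'd have to prove first), the compactness step does not close.
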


We will also discuss how dead-core solutions behave away from their free boundary $  \partial \{|(u,v)| >0\}  $. Before presenting the second result of our paper, we list an extra assumption:

\label{Section1:ass4} {\bf (A4).}
\begin{equation}
\label{3a}
   F(\mathrm{O}_n,x) =  0,  \quad    \forall \ x
 \in B_{1},     \tag{{\bf 3a}}
\end{equation}
and
\begin{equation*}
  G(\mathrm{O}_n,x) = 0,  \quad    \forall \ x
 \in B_{1}.
\end{equation*}

\begin{Theorem}[{\bf Non-degeneracy}]
\label{Thm2}
Suppose that \hyperref[Section1:ass1]{\bf (A1)}, \hyperref[Section1:ass2b]{\bf (A2b)}, \hyperref[Section1:ass3]{\bf (A3)} and \hyperref[Section1:ass4]{\bf (A4)} hold. Let $ (u,v) \geq 0 $ be a bounded viscosity solution to \eqref{DCP}, and $ x_{0} \in \overline{\{|(u,v)|>0\}} \cap B_{1/2}  $, then for a constant $ c > 0 $, depending on $ n, \Lambda, p, q, \lambda_{1} $ and $ \lambda_{2}$, such that
 \begin{equation*}
   \sup_{\overline{B_{r}(x_{0})}} |(u,v)| \geq c r^{\frac{2}{(1+p)(1+q)-\lambda_{1}\lambda_{2}}}
 \end{equation*}
for any $ r \in (0,\frac{1}{2}) $.
\end{Theorem}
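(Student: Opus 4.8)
The plan is to argue by contradiction, building from the boundary point $x_0 \in \partial\{|(u,v)|>0\}$ outward, and exploiting the scaling structure that made Theorem 1.1 work. First I would reduce to the case $x_0 \in \{|(u,v)|>0\}$: if $x_0$ is on the free boundary itself, the estimate follows by approaching $x_0$ along interior points and using continuity of $u,v$. So fix $x_0$ with $u(x_0)>0$ (the case $v(x_0)>0$ is symmetric, and note $|(u,v)|(x_0)>0$ forces at least one of these). The heart of the matter is to produce a lower barrier: I would construct an explicit radial subsolution $\Phi(x) = c|x-x_0|^{\gamma}$ with $\gamma = \frac{2}{(1+p)(1+q)-\lambda_1\lambda_2}$, or rather the pair of exponents $\gamma_u = \frac{(1+q)(2+p)+\lambda_1(2+q)}{(1+p)(1+q)-\lambda_1\lambda_2}$ and $\gamma_v = \frac{(1+p)(2+q)+\lambda_2(2+p)}{(1+p)(1+q)-\lambda_1\lambda_2}$, chosen so that the coupled powers balance: one checks directly that $|D\Phi_u|^p \mathcal{M}^-(D^2\Phi_u) \le (\Phi_v)^{\lambda_1}$ and $|D\Phi_v|^q \mathcal{M}^-(D^2\Phi_v) \le (\Phi_u)^{\lambda_2}$ for a suitable small constant $c=c(n,\lambda,\Lambda,p,q,\lambda_1,\lambda_2)$, using Assumption 1.4 so that $F(0,x)=G(0,x)=0$ and hence the Pucci extremal operators control $F,G$ from below on radial functions.

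Next I would run the comparison argument on an annular region. Suppose for contradiction that $\sup_{\overline{B_r(x_0)}}|(u,v)| < c r^{\gamma}$ for some $r\in(0,1/2)$ with $c$ smaller than the barrier constant. On the ball $B_r(x_0)$ consider the functions $\phi_u(x) = c|x-x_0|^{\gamma_u}$ and $\phi_v(x) = c|x-x_0|^{\gamma_v}$. On $\partial B_r(x_0)$ the contradiction hypothesis gives $u \le c r^{\gamma_u} = \phi_u$ and $v \le c r^{\gamma_v} = \phi_v$ (after splitting $|(u,v)|$ into its two summands and matching the normalizations in the definition of $|(u,v)|$; this is where the precise exponents in the statement of the theorem are forced). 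Meanwhile $(u,v)$ solves \eqref{DCP} and $(\phi_u,\phi_v)$ is a subsolution pair. The comparison principle for the coupled system — which I expect is available because $F,G$ are proper and the right-hand sides $(v_+)^{\lambda_1}$, $(u_+)^{\lambda_2}$ are monotone in the coupling variable, so one can compare component-by-component using an iteration or a monotone-operator argument — then yields $u \ge \phi_u$ and $v \ge \phi_v$ on $B_r(x_0)$. Evaluating at $x=x_0$ gives $u(x_0) \ge \phi_u(x_0) = 0$, which is no contradiction directly; instead one should center the barrier not at $x_0$ but at a nearby free boundary point, or more cleanly: pick $x_0 \in \overline{\{|(u,v)|>0\}}$, choose an interior point $y$ close to $x_0$ with $u(y)>0$, and slide the barrier so that it forces $u$ to be at least comparable to $c|y-x_0|^{\gamma_u}$, then let $y\to x_0$ is the wrong direction — rather one fixes the scale $r$ and uses that $\sup_{B_r}$ must exceed the barrier value at radius $r/2$, namely $c(r/2)^{\gamma}$, giving the conclusion with $c$ replaced by $c 2^{-\gamma}$.

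The main obstacle I anticipate is the comparison step for the system: unlike the scalar equations \eqref{Intro:eq2}--\eqref{Intro:eq3} treated in \cite{T16,SLR21}, the paper explicitly notes (in the discussion of case \hyperref[H3]{\bf (H3)}) that \eqref{DCP} lacks a straightforward comparison principle and that Perron's method is unavailable. To get around this I would either (i) invoke whatever comparison or ordering lemma for the coupled dead-core system is established earlier in the paper for the proof of Theorem 1.1, applying it on the punctured annulus $B_r(x_0)\setminus\{x_0\}$ where the barrier is smooth and the singularity at the center is removable because $\gamma_u,\gamma_v>1$ so $\phi_u,\phi_v \in C^1$; or (ii) decouple by freezing: treat $(v_+)^{\lambda_1}$ as a known nonnegative source for the $u$-equation, apply the scalar comparison principle for $|Du|^pF(D^2u,x)=f$ to get $u\ge\phi_u$ provided $f$ dominates the barrier's right side, and close a bootstrap between the two components using the contradiction hypothesis as the a priori upper bound. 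A secondary technical point is checking that the barrier construction actually works with the $x$-dependent operators $F(\cdot,x)$, $G(\cdot,x)$: here Assumption 1.4 ($F(0,x)=G(0,x)=0$) together with uniform ellipticity (Assumption 1.1) is exactly what lets us write $F(D^2\phi_u,x)\ge \mathcal{M}^-_{\lambda,\Lambda}(D^2\phi_u)$ uniformly in $x$, so the barrier inequalities become the purely algebraic ODE computation $c^{1+q}\,\gamma_u^{\,q}\,(\gamma_u-1)\,|x-x_0|^{(\gamma_u-2)(1+q)} \lesssim c^{\lambda_1}|x-x_0|^{\gamma_v\lambda_1}$ and its counterpart, which balance precisely because $\gamma_v\lambda_1 = (\gamma_u-2)(1+q)$ by the choice of exponents — this identity is the real content and I would verify it at the outset.
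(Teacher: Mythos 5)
Your overall strategy -- an explicit radial barrier pair with exponents $\gamma_u=\alpha$, $\gamma_v=\beta$, combined with a comparison principle for the coupled system and a contradiction at $x_0$ -- is indeed the skeleton of the paper's proof. But several pieces are inverted or missing, and as written the argument does not close.

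First, the barrier must be a \emph{super-solution} pair, not a sub-solution pair. You call $(\Phi_u,\Phi_v)$ a "radial subsolution" yet write the supersolution inequalities $|D\Phi_u|^p(\cdot)\le(\Phi_v)^{\lambda_1}$; and since $\mathcal{M}^-\le F\le\mathcal{M}^+$, forcing a one-sided bound on $F$ for every admissible $F$ requires $\mathcal{M}^+$ rather than $\mathcal{M}^-$. In the paper, $c_1,c_2$ are chosen precisely so that $\Lambda(c_1\alpha)^{1+p}(n+\alpha-2)\le c_2^{\lambda_1}$ and its counterpart, which makes $(\psi_1,\psi_2)=(c_1|x-x_0|^\alpha,c_2|x-x_0|^\beta)$ a viscosity \emph{super}-solution of \eqref{DCP}.

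Second, your comparison step is logically reversed. Under the contradiction hypothesis you obtain $u\le\phi_u$ and $v\le\phi_v$ on $\partial B_r(x_0)$, and then assert "the comparison principle yields $u\ge\phi_u$ and $v\ge\phi_v$ on $B_r(x_0)$." No comparison principle produces the reverse ordering in the interior from $u\le\phi_u$ on the boundary; you yourself note that evaluating at $x_0$ gives nothing. What actually makes the argument work -- and what is absent from your proposal -- is the \emph{min-trick}: define $\widetilde\psi_1:=\min\{u,\psi_1\}$, $\widetilde\psi_2:=\min\{v,\psi_2\}$ in $B_r(x_0)$, extended by $u,v$ outside. Because the strict inequalities $u<\psi_1$, $v<\psi_2$ hold on $\partial B_r(x_0)$, these extensions are continuous, and being a minimum of two super-solutions of \eqref{DCP} in $B_r(x_0)$ agreeing with the super-solution $(u,v)$ outside, $(\widetilde\psi_1,\widetilde\psi_2)$ is a global super-solution equal to $(u,v)$ on $\partial B_1$. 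The paper's new \emph{weak comparison principle} for systems, Lemma~\ref{Sec3:le1}, then forces $\widetilde\psi_1\ge u$ or $\widetilde\psi_2\ge v$ in $B_1$. Since also $\widetilde\psi_i\le u,v$ by construction and $\widetilde\psi_1(x_0)=\widetilde\psi_2(x_0)=0$, this contradicts $|(u(x_0),v(x_0))|>0$. Your alternative (ii) of "decoupling by freezing" is circular: to control the frozen source term for the $u$-equation you would need non-degeneracy of $v$, which is what is being proved.

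Finally, a smaller point: the exponent identity you quote, $\gamma_v\lambda_1=(\gamma_u-2)(1+q)$, is incorrect. The correct balance, obtained by equating the power of $|x-x_0|$ in $|D\Phi_u|^p F(D^2\Phi_u)$ with that of $\Phi_v^{\lambda_1}$, is $(1+p)\gamma_u-(2+p)=\lambda_1\gamma_v$ (equivalently $(1+q)\gamma_v-(2+q)=\lambda_2\gamma_u$); your formula has the wrong degeneracy exponent and fails already for $p\neq q$.
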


Before proceeding further, we conclude the following observations in Remarks \ref{Intro:rk3}--\ref{Intro:rk2}.

\begin{remark}\label{Intro:rk3}
Thanks to Theorems \ref{Thm1} and \ref{Thm2}, one has
\begin{equation*}
  u \in C^{\frac{(1+q)(2+p)+\lambda_{1}(2+q)}{(1+p)(1+q)-\lambda_{1}\lambda_{2}}} \ \ \text{and} \ \ v \in C^{\frac{(1+p)(2+q)+\lambda_{2}(2+p)}{(1+p)(1+q)-\lambda_{1}\lambda_{2}}}
\end{equation*}
along the free boundary $  \partial \{|(u,v)| >0\}  $. This regularity is higher than the regularity of viscosity solutions to degenerate or singular equation. As a matter of fact, for the case $ p,q \geq 0 $, as $ |Du|^{p} F(D^{2}u,x) \in C^{0,\lambda_{1}}_{\mathrm{loc}} $ and $ |Dv|^{q} G(D^{2}v,x) \in C^{0,\lambda_{2}}_{\mathrm{loc}} $. Then Lemma \ref{Section2:lem2} assures that
\begin{equation*}
  u \in C^{1,\min\{\alpha_{0}^{-},\frac{1+\lambda_{1}}{1+p} \}}_{\mathrm{loc}} \ \ \text{and} \ \ v \in C^{1,\min\{\alpha_{0}^{-},\frac{1+\lambda_{2}}{1+q} \}}_{\mathrm{loc}}.
\end{equation*}
When the operator $ F, G $ are concave/convex and $ \lambda_{1} < p $, it was proved in \cite[Theorem 1]{T24} sharp regularity, i.e.,
\begin{equation*}
  u \in C^{1,\frac{1+\lambda_{1}}{1+p}}_{\mathrm{loc}}   \ \ \text{and} \ \ v \in C^{1,\frac{1+\lambda_{2}}{1+q}}_{\mathrm{loc}}.
\end{equation*}
Nevertheless, we must highlight that
\begin{equation*}
  \frac{(1+q)(2+p)+\lambda_{1}(2+q)}{(1+p)(1+q)-\lambda_{1}\lambda_{2}} > 1+  \frac{1+\lambda_{1}}{1+p} \ \ \text{and}   \ \ \frac{(1+p)(2+q)+\lambda_{2}(2+p)}{(1+p)(1+q)-\lambda_{1}\lambda_{2}} > 1+ \frac{1+\lambda_{2}}{1+q},
\end{equation*}
which implies that dead-core solution $ (u,v) $ is more regular, across the free boundary, than the best regularity result coming from the classical regularity theory. Analogously, for the case $ -1 < p, q < 0 $, it can be seen that $ u, v \in C^{1,\alpha}_{\mathrm{loc}} $ for any $ 0 < \alpha < 1 $ in the spirit of Remark \ref{Sec2:rk2}. Direct calculations also yield that
 \begin{equation*}
  \frac{(1+q)(2+p)+\lambda_{1}(2+q)}{(1+p)(1+q)-\lambda_{1}\lambda_{2}}, \ \frac{(1+p)(2+q)+\lambda_{2}(2+p)}{(1+p)(1+q)-\lambda_{1}\lambda_{2}} > 1+  \alpha.
\end{equation*}
\end{remark}

\begin{remark}\label{Intro:rk1}
As a direct consequence of Theorem \ref{Thm1} and \ref{Thm2}, one has $ |(u,v)| $ grows exactly with rate $ |x|^{\frac{2}{(1+p)(1+q)-\lambda_{1}\lambda_{2}}} $, see Corollary \ref{Se6:coro1}, for details.
\end{remark}

\begin{remark}\label{Intro:rk2}
The method here may also work for a much broader class of degenerate or singular fully nonlinear systems
\begin{equation*}
\left\{
     \begin{aligned}
     & F(D^{2}u,Du,x) + |Du|^{p}\langle b_{1}(x), Du\rangle= (v_{+})^{\lambda_{1}} \ \ \text{in} \ \ B_{1}   \\
     & G(D^{2}v,Dv,x) + |Dv|^{q} \langle b_{2}(x), Dv\rangle=  (u_{+})^{\lambda_{2}} \ \ \text{in} \ \ B_{1},        \\
     \end{aligned}
     \right.
\end{equation*}
where $ b_{1}, b_{2} \in C^{0}(\overline{B}_{1}, \mathbb{R}^{n}) $, and the operator $ F, G $ satisfy some particular conditions in \cite{SLR21}. A full exploration of this issue is reserved for a subsequent paper.
\end{remark}

As an application of our findings, we prove some Liouville-type results for problem \eqref{DCP}. More precisely,

\begin{Theorem}[{\bf Improved Liouville-type result}]
\label{Section7:Thm2}
Suppose that \hyperref[Section1:ass1]{\bf (A1)}, \hyperref[Section1:ass2b]{\bf (A2b)}, \hyperref[Section1:ass3]{\bf (A3)} and \hyperref[Section1:ass4]{\bf (A4)} hold. Let $ (u,v) $ be a non-negative viscosity solution of
\begin{equation}\label{Section7:eqq1}
\left\{
     \begin{aligned}
     &  |Du|^{p} F(D^{2}u,x) = (v_{+})^{\lambda_{1}}\ \ \text{in} \ \ \mathbb{R}^{n}    \\
     &  |Dv|^{q} G(D^{2}v,x) =  (u_{+})^{\lambda_{2}} \ \ \text{in} \ \ \mathbb{R}^{n},       \\
     \end{aligned}
     \right.
\end{equation}
and
\begin{equation}\label{Section7:eq5}
\limsup_{|x|\rightarrow \infty} |x|^{\frac{-2}{(1+p)(1+q)-\lambda_{1}\lambda_{2}}} |(u(x), v(x))| <  \min \bigg\{A^{\frac{2}{(1+q)(2+p)+\lambda_{1}(2+q)}}, B^{\frac{2}{(1+p)(2+q)+\lambda_{2}(2+p)}} \bigg\},
\end{equation}
where
\begin{equation*}
\left\{
     \begin{aligned}
     & \alpha := \frac{(1+q)(2+p)+\lambda_{1}(2+q)}{(1+p)(1+q)-\lambda_{1}\lambda_{2}}; \ \beta :=  \frac{(1+p)(2+q)+\lambda_{2}(2+p)}{(1+p)(1+q)-\lambda_{1}\lambda_{2}} \\
     &  A = \big[ \Lambda(n+\beta-2)\beta^{1+q} \big]^{\frac{\lambda_{1}}{\lambda_{1}\lambda_{2}-(1+p)(1+q)}}  \big[ \Lambda(n+\alpha-2)\alpha^{1+p} \big]^{\frac{1+q}{\lambda_{1}\lambda_{2}-(1+p)(1+q)}}   \\
     &  B = \big[ \Lambda(n+\alpha-2) \alpha^{1+p} \big]^{\frac{\lambda_{2}}{\lambda_{1}\lambda_{2}-(1+p)(1+q)}} \big[ \Lambda(n+\beta-2) \beta^{1+q} \big]^{\frac{1+p}{\lambda_{1}\lambda_{2}-(1+p)(1+q)}}.       \\
     \end{aligned}
     \right.
\end{equation*}
Then $ u \equiv v \equiv 0 $ in $ \mathbb{R}^{n} $.
\end{Theorem}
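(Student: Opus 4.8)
The strategy is a standard Liouville-type argument: construct an explicit radial supersolution pair that, by the growth hypothesis \eqref{Section7:eq5}, dominates $(u,v)$ everywhere, then let a scaling parameter go to infinity to force $(u,v)$ to vanish. First I would look for barriers of the form $\Psi_1(x) = A|x|^{\alpha}$ and $\Psi_2(x) = B|x|^{\beta}$ with $\alpha,\beta$ as defined in the statement. A direct computation for a radial function $w = c|x|^{\gamma}$ gives $Dw = c\gamma|x|^{\gamma-2}x$ and $D^2 w = c\gamma|x|^{\gamma-2}\big(I + (\gamma-2)\tfrac{x\otimes x}{|x|^2}\big)$, so using uniform ellipticity (Assumption \ref{Section1:ass1}) together with Assumption \ref{Section1:ass4} one bounds $|Dw|^{p}F(D^2w,x)$ above by $c^{1+p}\gamma^{1+p}\Lambda(n+\gamma-2)|x|^{(1+p)(\gamma-2) - p \cdot 0}$ — more precisely the exponent on $|x|$ is $(1+p)(\gamma-2)+p(\gamma-1)\cdot(\text{correction})$; carefully, $|Dw|^p = c^p\gamma^p|x|^{p(\gamma-1)}$ and $\|D^2w\|\le c\gamma(n+\gamma-2)|x|^{\gamma-2}$, giving exponent $p(\gamma-1)+(\gamma-2)$. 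The exponents $\alpha,\beta$ are chosen precisely so that $(1+q)(\beta-2)+q(\beta-1)$ matches $\lambda_1$ times... — rather, so that plugging $\Psi_1$ into the left side of the first equation produces a multiple of $|x|^{\lambda_1\beta/\alpha\cdot(\cdots)}$; the algebraic identities
$$ p(\alpha-1)+(\alpha-2) = \lambda_1\beta, \qquad q(\beta-1)+(\beta-2)=\lambda_2\alpha $$
are exactly what the system $\alpha(1+p)-\alpha-p = \lambda_1\beta$, $\beta(1+q)-\beta-q=\lambda_2\alpha$ encodes, and solving this linear system in $\alpha,\beta$ yields the stated formulas, with the constants $A,B$ chosen so the numerical coefficients balance: $A^{1+p}\alpha^{1+p}\Lambda(n+\alpha-2) = B^{\lambda_1}$ and symmetrically $B^{1+q}\beta^{1+q}\Lambda(n+\beta-2)=A^{\lambda_2}$. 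This is precisely the coupled system defining $A,B$ in the statement (taking logarithms turns it into a $2\times2$ linear system whose solution is the displayed product of powers).

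Next, having verified that $(\Psi_1,\Psi_2)$ is a (viscosity) supersolution of \eqref{Section7:eqq1} away from the origin, I would run the comparison. Fix $R>0$ large. On $\partial B_R$, the hypothesis \eqref{Section7:eq5} guarantees that for $R$ large enough $u \le A|x|^{\alpha}$ and $v\le B|x|^{\beta}$ on $\partial B_R$ — here one must unwind the definition $|(u,v)| = (u_+)^{2/[(1+q)(2+p)+\lambda_1(2+q)]} + (v_+)^{2/[(1+p)(2+q)+\lambda_2(2+p)]}$ and note $2/[(1+q)(2+p)+\lambda_1(2+q)] \cdot \alpha = 2/[(1+p)(1+q)-\lambda_1\lambda_2]$, so that the $\limsup$ condition with the min of $A^{\cdots}$ and $B^{\cdots}$ on the right is exactly the statement that both $u_+^{\cdots}$ and $v_+^{\cdots}$ are eventually below the corresponding barrier term. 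Then I apply the comparison principle for the degenerate/singular system on the annulus $B_R\setminus B_\rho$ for small $\rho$, send $\rho\to 0$ (the barriers vanish at the origin and $(u,v)\ge 0$, so no boundary obstruction appears there), to conclude $u\le A|x|^\alpha$ and $v\le B|x|^\beta$ on all of $B_R$, hence on $\mathbb R^n$.

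Finally, I would exploit scale invariance. The system \eqref{Section7:eqq1} is invariant under $u \mapsto \mu^{-\alpha} u(\mu \,\cdot\,)$, $v\mapsto \mu^{-\beta}v(\mu\,\cdot\,)$ for any $\mu>0$ (this is exactly why $\alpha,\beta$ were the natural exponents), so the scaled pair is again an entire solution satisfying the same growth bound; applying the already-established bound to it and undoing the scaling gives, for every $\mu$, $u(x)\le A\mu^{-\varepsilon}|x|^\alpha$-type decay — more cleanly, combine the barrier bound with the sub-critical hypothesis to get $\sup_{B_1}|(u,v)| \le \delta(R)$ with $\delta(R)\to 0$ as $R\to\infty$, forcing $u\equiv v\equiv 0$ on $B_1$ and then, by translation, everywhere. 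I expect the main obstacle to be the comparison step: as noted in the introduction, systems like \eqref{DCP} lack a straightforward comparison principle, so I would either establish a tailored comparison lemma for ordered super/sub-solution pairs of this cooperative system (the right sides $(v_+)^{\lambda_1}$, $(u_+)^{\lambda_2}$ are nondecreasing in the other unknown, which is what makes an iteration/monotonicity argument work), or reduce to the scalar degenerate case by freezing one component — the cooperative structure is essential and should be highlighted as the technical heart of the proof.
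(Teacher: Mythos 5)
There is a genuine gap here---actually two, and the second is fatal without the key idea of the paper.

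First, for this system the paper only has a \emph{weak} comparison principle: Lemma~\ref{Sec3:le1} turns ordered boundary data into the alternative ``$u_1\ge u_2$ \emph{or} $v_1\ge v_2$ inside $B_1$'', not both. Your sketch invokes a full comparison to obtain both $u\le A|x|^{\alpha}$ and $v\le B|x|^{\beta}$ simultaneously on $B_R$; that is not available. (You float ``freezing one component'', which would require a monotone iteration that you do not run and the paper does not use.) Moreover, your remark that the barriers ``vanish at the origin so no boundary obstruction appears there'' has the inequality going the wrong way: for an \emph{upper} barrier you need $u\le\Psi_1$ on the inner sphere $\partial B_\rho$, which fails as $\rho\to 0$ unless $u(0)=0$; and unlike Theorem~\ref{Section7:Thm1}, this theorem does \emph{not} assume $u(0)=v(0)=0$.

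Second, the concluding scaling step does not close. Hypothesis~\eqref{Section7:eq5} is a \emph{strict inequality} against a sharp explicit constant (the paper's remark notes it becomes equality for the self-similar pair and is then false), not an $o(\,\cdot\,)$ decay. The rescaling $u\mapsto\mu^{-\alpha}u(\mu\cdot)$, $v\mapsto\mu^{-\beta}v(\mu\cdot)$ leaves both the system and the $\limsup$ in~\eqref{Section7:eq5} unchanged, so iterating your barrier estimate produces no contraction and no ``$\delta(R)\to0$''.

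The missing ingredient is the \emph{translated} dead-core barrier. The paper compares $(u,v)$ on $B_R$ with $\widetilde u(x)=A(|x|-\rho)_+^{\alpha}$, $\widetilde v(x)=B(|x|-\rho)_+^{\beta}$, and chooses $\rho=R-\big(S_R/m\big)^{[(1+p)(1+q)-\lambda_1\lambda_2]/2}$, where $S_R=\sup_{\partial B_R}|(u,v)|$ and $m$ is the right-hand side of~\eqref{Section7:eq5}. Precisely because~\eqref{Section7:eq5} is strict, one has $\rho\ge\big(1-\theta^{[(1+p)(1+q)-\lambda_1\lambda_2]/2}\big)R$ for some fixed small $\theta<1$, so $\rho$ grows like a definite fraction of $R$. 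On $\partial B_R$, $\widetilde u\ge u$ and $\widetilde v\ge v$, and Lemma~\ref{Sec3:le1} gives $\widetilde u\ge u$ \emph{or} $\widetilde v\ge v$ on $B_R$. Since $\widetilde u=\widetilde v\equiv 0$ on $B_\rho$ and $\rho\to\infty$ with $R$, this forces $u\,v\equiv 0$ on $\mathbb{R}^{n}$; plugging back into one equation of~\eqref{Section7:eqq1} (if $u(z_0)>0$ then $v\equiv 0$ near $z_0$, hence $Dv\equiv 0$ there, hence $u_+^{\lambda_2}\equiv 0$ near $z_0$, a contradiction) then yields $u\equiv v\equiv 0$. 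That translated-barrier mechanism, which converts the strict growth hypothesis into a definite dead-core fraction, is exactly what your proposal lacks.
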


\begin{remark}
Note that Theorem \ref{Section7:Thm2} is only applicable to $ 0\leq  p,q < \infty $. We will further explain this in Section \ref{Section 6}.
\end{remark}

The second topic of this paper is to study the regularity properties of viscosity solutions of H\'{e}non-type equations. Very recently, Hardy-H\'{e}non-type equations with strong absorption term have invoked much attention. Historically, the H\'{e}non equation originated from the study of the stability of a spherical star system in equilibrium under spherical disturbances (see, for instance, \cite{H73}). To recall a classical elliptic model, consider:
 \begin{equation}\label{***}
\left\{
     \begin{aligned}
     & -\Delta u(x) = |x|^{\alpha} u^{p}(x)                 \ \ \text{in}    \ \ B_{1}, \ \ p>1,          \\
     &  u(x) >0      \qquad \qquad \qquad \ \text{in} \ \ B_{1},        \\
     &  u(x) = 0     \qquad \qquad \qquad \  \text{on} \ \ \partial B_{1}.
     \end{aligned}
     \right.
\end{equation}
This equation is referred to as a Hardy-type equation when $ \alpha < 0 $, due to its connection with the Hardy-Sobolev inequality, and as a H\'{e}non-type equation if $ \alpha >0 $, owing to its relationship with various diffusion phenomena. While the existence theory for nontrivial solutions to \eqref{***} is well-established and relatively mature, regularity estimates for viscosity solutions to Hardy-H\'{e}non-type equations with possibly singular weights and strong absorption remain largely unexplored.

The first breakthrough in this direction was achieved in the recent work of da Silva {\em et al.} \cite{SPRS24}, where they established sharp and improved regularity estimates for weak solutions of weighted quasilinear elliptic models of Hardy-H\'{e}non-type equations. Subsequently, Bezerra J\'{u}nior {\em et al.} \cite{JSNS24} obtained similar results for equations governed by the infinity Laplacian.

The preceding research (cf. \cite{SPRS24, JSNS24}) raises the important question: can we obtain improved regularity estimates for viscosity solutions to H\'{e}non-type equations with possibly degenerate weights and strong absorption, governed by degenerate fully nonlinear operators? In this paper, we provide a clear affirmative answer to this question. To begin with, let us provide a clear statement. We consider
\begin{equation}\label{Intro:eq4}
|Du|^{p} F(D^{2}u,x) = f(|x|,u(x)) \ \ \text{in} \ \ B_{1}, \tag{{\bf HHTE}}
\end{equation}
where $ f $ fulfills the following assumption:

\label{Section1:ass5} {\bf (A5).} For all $ (x,t) \in B_{1} \times I $, $ r,s \in (0,1) $, where $ I \subset \mathbb{R}$ is an interval, and assume that there exist a positive constant $ C_{n} $ and $ f_{0} \in L^{\infty}(B_{1}) $ such that
$$  |f(r|x|, st)| \leq C_{n} r^{\alpha} s^{\mu} |f_{0}(x)|  \ \  \text{for}  \ \  0 \leq \mu < 1+p, \ 0 \leq p < \infty \ \ \text{and} \ \ 0 < \alpha < \infty. $$

As we will discuss later in this section, the result is novel even for the simplest model $ |Du|^{p} \Delta u = |x|^{\alpha}(u_{+})^{\mu}  $. The fourth main result of this paper is then stated as follows.

\begin{Theorem}[{\bf Higher regularity estimate}]
\label{Thm5}
Let $ u \in C^{0}(B_{1}) $ be a bounded viscosity solution of \eqref{Intro:eq4}, and \eqref{1a}, \eqref{2a} and \hyperref[Section1:ass5]{\bf (A5)} hold.
Then for any point $ x_{0} \in B_{1/2} \cap \partial \{u>0\}  $, such that $ f(|x|,t) \simeq   |x-x_{0}|^{\alpha}t_{+}^{\mu} $, we have
\begin{equation}\label{Intro:eq6}
    \sup_{x\in B_{r}(x_{0})} u(x)  \leq C r^{\frac{2+p+\alpha}{1+p-\mu}}
\end{equation}
for $ r \in (0,1/2) $, where $ C $ is a positive constant depending only on $ n, p, \alpha $ and $ \mu $.
\end{Theorem}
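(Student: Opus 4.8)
The plan is to run the usual geometric tangential analysis scheme: a flatness-improvement lemma obtained by compactness, followed by a dyadic iteration whose scaling exponent is forced to equal $\gamma:=\frac{2+p+\alpha}{1+p-\mu}$. As a first step I would translate $x_{0}$ to the origin and carry out one preliminary rescaling $u\mapsto u_{\tau}(x):=K^{-1}u(\tau x)$, with $K:=\max\{1,\|u\|_{L^{\infty}(B_{1})}\}$ and $\tau\in(0,1)$. A direct computation shows that $u_{\tau}$ solves $|Du_{\tau}|^{p}\widetilde F(D^{2}u_{\tau},x)=f_{\tau}$ in $B_{1}$, where $\widetilde F(M,x):=(\tau^{2}/K)F((K/\tau^{2})M,\tau x)$ is again uniformly elliptic with the same constants $\lambda,\Lambda$, and, by Assumption~\ref{Section1:ass5}, the datum obeys $|f_{\tau}(x,t)|\le C_{n}K^{\mu-1-p}\tau^{\,\alpha+p+2}|x|^{\alpha}t_{+}^{\mu}\|f_{0}\|_{L^{\infty}(B_{1})}$. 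Since $\mu<1+p$ yields $K^{\mu-1-p}\le1$ and $\alpha>-\mu\frac{2+p}{1+p}$ yields $\alpha+p+2>0$, choosing $\tau$ small makes this prefactor as small as we wish. After renaming we may therefore assume $u\ge0$ in $B_{1}$, $u(0)=0$ (because $x_{0}\in\partial\{u>0\}$), $\|u\|_{L^{\infty}(B_{1})}\le1$, and $|f(x,t)|\le\varepsilon_{0}|x|^{\alpha}t_{+}^{\mu}$ with $\varepsilon_{0}>0$ to be chosen; reversing this normalization at the end only affects the constant in \eqref{Intro:eq6}.

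\emph{Step 2 (flatness improvement).} The core of the argument is the claim that there exist $\rho\in(0,\tfrac12)$ and $\varepsilon_{0}>0$, depending only on $n,\lambda,\Lambda,p,\mu,\alpha$, such that every $u$ as above satisfies $\sup_{B_{\rho}}u\le\rho^{\gamma}$. I would prove this by contradiction and compactness: if it fails, choose $\varepsilon_{0}^{(k)}\downarrow0$ and solutions $u_{k}$ (with operators $F_{k}$ and data $f_{k}$) for which $\sup_{B_{\rho}}u_{k}>\rho^{\gamma}$; the interior regularity estimates for $|Du|^{p}F(D^{2}u,\cdot)=g$ with bounded $g$ recalled in Section~2 (cf.\ \cite{SLR21,ART15}) give local equicontinuity, so along a subsequence $u_{k}\to u_{\infty}$ in $C^{0}_{\mathrm{loc}}(B_{1})$, and by stability of viscosity solutions $u_{\infty}\ge0$ solves the homogeneous problem $|Du_{\infty}|^{p}F_{\infty}(D^{2}u_{\infty},x)=0$ in $B_{1}$ with $u_{\infty}(0)=0$. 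A strong-minimum-principle argument for this degenerate/singular operator then forces $u_{\infty}\equiv0$ in a neighbourhood of the origin, whence $\sup_{B_{\rho}}u_{\infty}=0<\rho^{\gamma}$, which contradicts $\sup_{B_{\rho}}u_{k}\to\sup_{B_{\rho}}u_{\infty}$ together with $\sup_{B_{\rho}}u_{k}>\rho^{\gamma}$.

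\emph{Step 3 (iteration and conclusion).} Next I would prove $\sup_{B_{\rho^{j}}}u\le\rho^{j\gamma}$ for every $j\ge0$ by induction, the case $j=0$ being the normalization. Assuming the bound at step $j$, set $u_{j}(x):=\rho^{-j\gamma}u(\rho^{j}x)$: then $u_{j}\ge0$, $u_{j}(0)=0$, $\|u_{j}\|_{L^{\infty}(B_{1})}\le1$, and $u_{j}$ solves $|Du_{j}|^{p}F_{j}(D^{2}u_{j},x)=f_{j}$ with $F_{j}(M,x):=\rho^{j(2-\gamma)}F(\rho^{j(\gamma-2)}M,\rho^{j}x)$ uniformly elliptic with the same constants and — this is where the specific value of $\gamma$ enters — $|f_{j}(x,t)|\le\varepsilon_{0}\rho^{\,j[(2+p+\alpha)-\gamma(1+p-\mu)]}|x|^{\alpha}t_{+}^{\mu}=\varepsilon_{0}|x|^{\alpha}t_{+}^{\mu}$, since $\gamma(1+p-\mu)=2+p+\alpha$. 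Hence $u_{j}$ satisfies the hypotheses of Step~2, so $\sup_{B_{\rho}}u_{j}\le\rho^{\gamma}$, i.e.\ $\sup_{B_{\rho^{j+1}}}u\le\rho^{(j+1)\gamma}$. Finally, given $0<r<\rho$, pick $j$ with $\rho^{j+1}<r\le\rho^{j}$ and estimate $\sup_{B_{r}}u\le\sup_{B_{\rho^{j}}}u\le\rho^{j\gamma}=\rho^{-\gamma}(\rho^{j+1})^{\gamma}\le\rho^{-\gamma}r^{\gamma}$, while for $\rho\le r\le\tfrac12$ the bound is trivial; undoing the normalization of Step~1 yields \eqref{Intro:eq6} with $C=C(n,p,\alpha,\mu)$.

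\emph{Main obstacle.} I expect the delicate part to be the compactness step of Step~2. First, when the weight is singular ($\alpha<0$) one must still extract uniform interior estimates for the $u_{k}$ although $|x|^{\alpha}$ is unbounded near the origin; this should be handled by using $u_{k}(0)=0$ together with the net decay exponent $\alpha+\gamma\mu=\frac{\alpha(1+p)+\mu(2+p)}{1+p-\mu}$, which is positive precisely because of the bounds on $\alpha,\mu$ in Assumption~\ref{Section1:ass5}, so that the $u_{k}$ are locally equicontinuous. Second, one needs the rigidity statement that a nonnegative viscosity solution of $|Du_{\infty}|^{p}F_{\infty}(D^{2}u_{\infty},\cdot)=0$ with an interior zero is locally identically zero; for these degenerate/singular operators this requires some care on $\{Du_{\infty}=0\}$ and must be drawn from the strong minimum principle available for the class. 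A secondary point is to check that the rescaled operators $F_{j}$ retain equicontinuous coefficients, which follows from \eqref{2a}.
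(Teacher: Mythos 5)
Your proof is correct in broad outline but follows a genuinely different route from the paper's. You set up a flatness\nobreakdash-improvement lemma by compactness (if not, extract $u_k\to u_\infty$, show $u_\infty$ solves the homogeneous degenerate equation, invoke the cutting lemma of Imbert--Silvestre/Birindelli--Demengel plus the strong maximum principle to force $u_\infty\equiv 0$, contradiction) and then iterate dyadically. The paper instead avoids compactness entirely: in Section~\ref{Section 7} it proves the discrete decay $\theta_{j+1}\le \max\{C2^{-\beta(j+1)},\,2^{-\beta}\theta_j\}$ with $\theta_j:=\sup_{B_{2^{-j}}}u$ directly by contradiction, rescaling $v_k:=u(2^{-j_k}\cdot)/\theta_{j_k+1}$ and applying the Harnack inequality of D\'{a}vila--Felmer--Quaas (Theorem~\ref{Pre:Thm2.1}) to get $1=\sup_{B_{1/2}}v_k\le C\big(\inf_{B_{1/2}}v_k+\|\widetilde f\|_{L^\infty}^{1/(1+p)}\big)\to0$, using $v_k(0)=0$. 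The paper's introduction explicitly advertises this as a deliberate deviation from the flatness scheme used for Theorem~\ref{Thm1}. What each approach buys: yours is the standard ``geometric tangential'' machinery and generalises easily but relies on the strong maximum principle and a stable cutting lemma; the paper's Harnack route is quantitative (no compactness, constants traceable), shorter, and sidesteps the equicontinuity concerns near the origin that you rightly flag. On that last point, I would push back on the mechanism you propose in the ``Main obstacle'' paragraph: invoking $\alpha+\gamma\mu>0$ to tame the singular weight for $\alpha<0$ is circular, since that exponent is useful only once one already knows $u_k\lesssim|x|^\gamma$, which is the conclusion. The actual fix is that Assumption~\ref{Section1:ass5}, read by letting $r,s\to1^{-}$, already forces $f$ to be bounded on $B_1\times I$ for $\alpha<0$ (the factor $r^\alpha$ only {\it weakens} the bound when $\alpha<0$), so after your normalisation the rescaled data $f_k$ are uniformly bounded by $\varepsilon_k\to0$ and the interior $C^{1,\alpha}$ estimates of Lemma~\ref{Section2:lem1} give equicontinuity outright, no decay-exponent argument needed; incidentally, this uniform boundedness is also exactly what the paper needs to apply Theorem~\ref{Pre:Thm2.1}.
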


\begin{remark}
If the operator $ F $ is convex/concave, by applying Lemma \ref{Section2:lem1} from Section \ref{Section 2}, we obtain that the bounded viscosity solutions $ u $ of \eqref{Intro:eq4} are locally of class $ C^{1,\frac{1}{1+p}} $. In this setting, we still achieve improved regularity estimate along the free boundary, since
\begin{equation*}
 0 < \alpha < \infty  \ \ \Rightarrow  \ \  \frac{2+p+\alpha}{1+p-\mu} > 1+ \frac{1}{1+p} , \ \ 0 \leq p < \infty.
\end{equation*}
\end{remark}

An immediate and fundamental consequence of Theorem \ref{Thm5} is the local growth estimates for first derivative estimate of solutions. For this purpose, for any point $ z \in \{u>0\} \cap B_{1/2}  $, we define $ \gamma(z) \in \partial \{u>0\} $ such that
\begin{equation*}
  |z-\gamma(z)|= \mathrm{dist}(z,\partial\{u>0\}).
\end{equation*}

\vspace{-5pt}

\begin{Corollary}[{\bf Gradient growth near free boundary points}]
\label{Coro1}
Let $ u \in C^{0}(B_{1}) $ be a bounded viscosity solution to \eqref{Intro:eq4}, and \eqref{1a}, \eqref{2a} and \hyperref[Section1:ass5]{\bf (A5)} hold. Then $ u $ is locally Lipschitz continuous. Furthermore, for any point $ x_{0} \in \{u>0\} \cap B_{1/2}  $ such that $ f(|x|,t) \simeq |x-\gamma(x_{0})|^{\alpha}t_{+}^{\mu}  $, it holds
\begin{equation*}
  |Du(x_{0})| \leq C \mathrm{dist}(x_{0}, \partial \{u>0\})^{\frac{1+\alpha+\mu}{1+p-\mu}},
\end{equation*}
where $ C $ is a positive constant depending only on $ n, p, \alpha $ and $ \mu $.
\end{Corollary}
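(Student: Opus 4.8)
The plan is a blow-up (rescaling) argument anchored at the nearest free boundary point, powered by the sharp growth rate of Theorem~\ref{Thm5}. Fix $x_0\in\{u>0\}\cap B_{1/2}$ with $f(|x|,t)\simeq|x-\gamma(x_0)|^{\alpha}t_+^{\mu}$ and set $d:=\text{dist}(x_0,\partial\{u>0\})=|x_0-\gamma(x_0)|$; we may assume $d$ is small, since for $d$ bounded below the bound is immediate from the interior $C^{1,\gamma}$ estimates of Section~\ref{Section 2} (then $d^{(1+\alpha+\mu)/(1+p-\mu)}$ is bounded below as well). For such $d$ one has $\gamma(x_0)\in B_{1/2}$ and $B_d(x_0)\subset B_{2d}(\gamma(x_0))\subset B_1$, so Theorem~\ref{Thm5} applied at the free boundary point $\gamma(x_0)$ yields
\[
\theta\ :=\ \sup_{B_d(x_0)}u\ \le\ \sup_{B_{2d}(\gamma(x_0))}u\ \le\ C\,(2d)^{\frac{2+p+\alpha}{1+p-\mu}}\ \le\ C\,d^{\frac{2+p+\alpha}{1+p-\mu}}.
\]

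Next I would rescale: set $v(y):=\theta^{-1}u(x_0+dy)$ for $y\in B_1$, so that $0\le v\le C$ and $v>0$ on $B_1$ with $C$ universal, and $v$ is a viscosity solution of $|Dv|^{p}\,\widetilde F(D^2v,y)=\widetilde f(y)$ in $B_1$, where $\widetilde F(M,y):=\tfrac{d^{2}}{\theta}F\!\bigl(\tfrac{\theta}{d^{2}}M,x_0+dy\bigr)$ is uniformly elliptic with the same constants $\lambda,\Lambda$ and, for $d<1$, has a modulus of continuity controlled by $\omega_1$, and $\widetilde f(y):=\tfrac{d^{2+p}}{\theta^{1+p}}f\bigl(|x_0+dy|,\theta v(y)\bigr)$. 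The decisive step is that $\widetilde f$ is uniformly bounded on $B_{1/2}$: for $|y|\le\tfrac12$ we have $\tfrac d2\le|x_0+dy-\gamma(x_0)|\le\tfrac{3d}2$, whence, using $f\simeq|x-\gamma(x_0)|^{\alpha}t_+^{\mu}$, the bound $\theta v(y)=u(x_0+dy)\le C\theta$, and the identity $\theta^{1+p-\mu}\simeq d^{2+p+\alpha}$ dictated by the exponent in Theorem~\ref{Thm5},
\[
|\widetilde f(y)|\ \le\ C\,\frac{d^{2+p}}{\theta^{1+p}}\,d^{\alpha}\,\theta^{\mu}\ =\ C\,\frac{d^{2+p+\alpha}}{\theta^{1+p-\mu}}\ =\ C,\qquad C=C(n,p,\alpha,\mu,\lambda,\Lambda).
\]

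Now $v$ is a bounded viscosity solution of a uniformly elliptic degenerate/singular equation with bounded right-hand side, so the interior $C^{1,\gamma}$ estimates recalled in Section~\ref{Section 2} give $\|v\|_{C^{1,\gamma}(B_{1/4})}\le C$, hence $|Dv(0)|\le C$; since $Dv(0)=\tfrac{d}{\theta}\,Du(x_0)$,
\[
|Du(x_0)|\ \le\ C\,\frac{\theta}{d}\ \le\ C\,d^{\frac{2+p+\alpha}{1+p-\mu}-1}\ =\ C\,d^{\frac{1+\alpha+\mu}{1+p-\mu}},
\]
which is the assertion. For the first part of the statement, observe that $\alpha>-\mu\tfrac{2+p}{1+p}$ forces $1+\alpha+\mu>\tfrac{1+p-\mu}{1+p}>0$, so the exponent above is positive; combined with the interior $C^{1,\gamma}$ regularity in the positivity set and the fact that $Du\equiv0$ on the interior of $\{u=0\}$, this shows $Du$ is locally bounded, i.e.\ $u\in C^{0,1}_{\text{loc}}$.

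The crux — and the only nontrivial part — is the uniform bound on $\widetilde f$: it holds precisely because the growth exponent furnished by Theorem~\ref{Thm5} is sharp, so that the three competing scalings (the $\theta$-rescaling of $u$, the $d$-rescaling of $x$, and the weight $|x-\gamma(x_0)|^{\alpha}$) combine to a pure constant; any suboptimal rate would make $\widetilde f$ blow up or degenerate and break the argument. A minor technical point is that, to keep $|x_0+dy-\gamma(x_0)|^{\alpha}\simeq d^{\alpha}$, one must argue on $B_{1/2}$ rather than on all of $B_1$ so that $y$ stays away from the pre-image of $\gamma(x_0)$; the possible singularity of $|x|^{\alpha}$ at the origin, relevant only to the global Lipschitz claim, is absorbed by the same rescaling device applied near $0$.
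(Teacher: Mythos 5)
Your overall strategy is the same as the paper's: anchor a rescaling at the nearest free boundary point $\gamma(x_0)$, control the rescaled right-hand side, and invoke the interior $C^{1,\gamma}$ estimate (Lemma~\ref{Section2:lem1}). But there is a genuine gap in the way you normalize. You set $\theta := \sup_{B_d(x_0)} u$ and use $v = \theta^{-1} u(x_0 + d\,\cdot)$, and the boundedness of the rescaled source
\[
\widetilde f(y) = \frac{d^{2+p}}{\theta^{1+p}}\,f\bigl(|x_0+dy|,\theta v(y)\bigr) \;\lesssim\; \frac{d^{2+p+\alpha}}{\theta^{1+p-\mu}}
\]
rests on your claim that $\theta^{1+p-\mu} \simeq d^{2+p+\alpha}$, which you say is ``dictated by the exponent in Theorem~\ref{Thm5}.'' But Theorem~\ref{Thm5} provides only the one-sided bound $\theta \leq C\,d^{(2+p+\alpha)/(1+p-\mu)}$; this is the \emph{wrong} direction for what you need. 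The lower bound $\theta \gtrsim d^{(2+p+\alpha)/(1+p-\mu)}$ is a non-degeneracy statement, which under the hypotheses of Corollary~\ref{Coro1} (Assumption~\ref{Section1:ass5} only gives an upper growth on $f$) is not available — Theorem~\ref{Thm6} requires the extra lower-growth Assumption~\ref{Section1:ass51}. As written, $\theta$ could be much smaller than $d^{(2+p+\alpha)/(1+p-\mu)}$, making $\widetilde f$ unbounded and invalidating the uniform $C^{1,\gamma}$ bound for $v$.

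The gap is localized and easy to repair in either of two ways. The cleaner fix, and the one the paper chooses, is to normalize directly by the known rate: set $v(x) := u(x_0 + dx)\big/ d^{(2+p+\alpha)/(1+p-\mu)}$. Then Theorem~\ref{Thm5} gives $\sup_{B_1} v \leq C$ from above, and the analogous computation shows the rescaled source satisfies $\widetilde f \simeq |x|^\alpha v_+^\mu$, hence is uniformly bounded — no lower bound on $\theta$ is ever needed. Alternatively, you can keep your normalization by $\theta$ but then must retain the $\|\widetilde f\|_{L^\infty}^{1/(1+p)}$ contribution in Lemma~\ref{Section2:lem1}'s estimate rather than discarding it; scaling back, one checks that
\[
|Du(x_0)| \;\leq\; C\,\frac{\theta}{d}\Bigl(1 + \bigl(d^{2+p+\alpha}\theta^{-(1+p-\mu)}\bigr)^{1/(1+p)}\Bigr)
\;\leq\; C\,d^{(1+\alpha+\mu)/(1+p-\mu)},
\]
using only $\theta \leq C d^{(2+p+\alpha)/(1+p-\mu)}$, because the ``bad'' second term also collapses to the correct exponent after a short algebra. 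Either repair gives the same conclusion and the same route as the paper; what is missing from your writeup is precisely this step, and the assertion $\theta^{1+p-\mu}\simeq d^{2+p+\alpha}$ should be deleted.
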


Hereafter, we denote the critical set of solutions
\begin{equation*}
  \mathcal{S}_{u}(B_{1}):= \{ x \in B_{1}: u(x) = |Du(x)| = 0 \}.
\end{equation*}

To analyze non-degeneracy at critical points, we restrict our attention to the class of nonnegative limiting solutions $ u $, i.e., those obtained as uniform limits of solutions to the penalized problem
\begin{equation*}
\left\{
     \begin{aligned}
     & |Du_{\epsilon}|^{p} F(D^{2}u_{\epsilon},x) = f(|x|, u_{\epsilon}(x)) +  \epsilon        \quad \ \mathrm{in} \ \   B_{1}            \\
     &  u_{\epsilon} =  u   \qquad \qquad \qquad \qquad \qquad \qquad  \  \  \  \quad  \   \mathrm{on} \ \ \partial B_{1}.                     \\
     \end{aligned}
     \right.
\end{equation*}

Here we recall the definition of {\it limiting solution}, which was originally proposed in \cite{JSNS24}.  
\begin{Definition}[Limiting solution]
 Let $\mathcal{O}\subset \mathbb{R}^n$ be an open set. A nonnegative function $ u \in C^{0}(\mathcal{O})$ is said to be a limiting solution of the non-variational PDE
\begin{equation*}
|Du|^{p} F(D^{2}u,x) = f(|x|, u(x))
\quad   \text{in} \quad  \mathcal{O},
\end{equation*}
if there exists a sequence of nonnegative functions $\{u_j\}_{j\in\mathbb{N}}$ satisfying
\begin{equation*}
|Du_{j}|^{p} F(D^{2}u_{j},x) = f(|x|, u_{j}(x)) + (1/j)
\quad \text{in} \quad  \mathcal{O},
\end{equation*}
in the viscosity sense, such that $u_j$ converges locally uniformly to $u$ in $\mathcal{O}$ as $j\to\infty$.
\end{Definition}

We now establish the fifth main result, concerning non-degeneracy at critical points, which reads as follows

\begin{Theorem}[{\bf Non-degeneracy at critical points}]
\label{Thm6}
Let $ u \in C^{0}(B_{1}) $ be a bounded viscosity limiting solution to \eqref{Intro:eq4}, and \eqref{1a}, \eqref{2a} and \hyperref[Section1:ass5]{\bf (A5)} hold. Then there exists $ r^{*} >0 $ such that for any critical point $ x_{0} \in \mathcal{S}_{u}(B_{1}) $ and for all $ r \in (0, r^{*}) $ such that $ B_{r}(x_{0}) \subset B_{1} $, it holds
\begin{equation*}
  \sup_{\partial B_{r}(x_{0})} u(x)  \geq \bigg[\frac{(1+p-\mu)^{2+p}}{\Lambda(2+p+\alpha)^{1+p}[n(1+p-\mu)+(2\mu+\alpha-p)]}            \bigg]^{\frac{1}{1+p-\mu}}  r^{\frac{2+p+\alpha}{1+p-\mu}}.
\end{equation*}
\end{Theorem}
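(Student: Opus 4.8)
### Proof Plan for Theorem 1.6 (Non-degeneracy at critical points)

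The plan is to use a barrier/comparison argument at the critical point $x_{0}$. First I would normalize the situation so that without loss of generality $x_{0}$ is the center of the ball under consideration, and set $\gamma := \frac{2+p+\alpha}{1+p-\mu}$, which is precisely the exponent appearing in Theorem \ref{Thm5}. The natural candidate for a subsolution barrier is the explicit radial function
\begin{equation*}
\Phi(x) := \kappa \, |x-x_{0}|^{\gamma},
\end{equation*}
with $\kappa > 0$ a constant to be chosen as the bracketed quantity in the statement. Since $\gamma > 1$ (because $\alpha > -\mu\frac{2+p}{1+p}$ forces $2+p+\alpha > 1+p-\mu$), $\Phi$ is $C^{2}$ away from $x_{0}$ with $D\Phi(x) = \kappa\gamma |x-x_{0}|^{\gamma-2}(x-x_{0})$ and Hessian eigenvalues $\kappa\gamma(\gamma-1)|x-x_{0}|^{\gamma-2}$ (radial direction) and $\kappa\gamma |x-x_{0}|^{\gamma-2}$ (with multiplicity $n-1$, tangential). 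Using \eqref{1a}, the fully nonlinear operator acting on $\Phi$ is controlled from above by $\Lambda$ times the trace-type quantity, giving
\begin{equation*}
|D\Phi|^{p} F(D^{2}\Phi, x) \;\leq\; \Lambda \,(\kappa\gamma)^{1+p}\bigl[(n-1) + (\gamma-1)\bigr]\,|x-x_{0}|^{(1+p)(\gamma-2)+\gamma} \;=\; \Lambda (\kappa\gamma)^{1+p}(n-1+\gamma-1)\,|x-x_{0}|^{(1+p)\gamma-2-2p+\gamma-p}.
\end{equation*}
A direct exponent check shows $(1+p)(\gamma-2)+\gamma = \gamma(2+p) - 2(1+p) = \frac{(2+p+\alpha)(2+p)}{1+p-\mu} - 2(1+p)$, and one verifies this equals $\alpha + \mu\gamma$; this is the identity that makes the barrier work, and it is where the precise form of $\gamma$ is used.

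Next I would compare $\Phi$ against $u$ on the annular/ball region. Using Assumption \ref{Section1:ass51}, namely $f(|x|,u) \geq c_{0}|x|^{\alpha}u_{+}^{\mu}$ — and more carefully $\gtrsim |x-x_{0}|^{\alpha}$ near $x_{0}$, as in the hypotheses of Theorem \ref{Thm5}/Corollary \ref{Coro1} — the plan is to show that if the choice
\begin{equation*}
\kappa \;=\; \bigg[ \frac{c_{0}(1+p-\mu)^{2+p}}{\Lambda \,[(n-2)(1+p-\mu)+(2+p+\alpha)]\,(p+2+\alpha)^{1+p}} \bigg]^{\frac{1}{1+p-\mu}}
\end{equation*}
is made, then $\Phi$ is a viscosity subsolution of $|D\Phi|^{p}F(D^{2}\Phi,x) \geq c_{0}|x-x_{0}|^{\alpha}\Phi^{\mu}$ on $B_{r}(x_{0})$ (the algebra here reduces exactly to $\Lambda(\kappa\gamma)^{1+p}(n-1+\gamma-1) \leq c_{0}\kappa^{\mu}$, which with $\gamma=\frac{2+p+\alpha}{1+p-\mu}$ rearranges to the stated formula for $\kappa$). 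Then, arguing by contradiction, suppose $\sup_{\partial B_{r}(x_{0})} u < \kappa r^{\gamma}$. Since $x_{0}$ is a critical point, $u(x_{0}) = 0 = \Phi(x_{0})$ and $|Du(x_{0})| = 0$, so $u$ and $\Phi$ touch at $x_{0}$ from the structure of the problem; on $\partial B_{r}(x_{0})$ we would have $u < \Phi$. The comparison principle for this degenerate/singular operator — available once we observe $|D\Phi| \neq 0$ on $B_{r}(x_{0})\setminus\{x_{0}\}$ so no genuine singularity of the operator is encountered in the region where comparison is applied, and near $x_{0}$ one uses the criticality of $u$ to handle the degenerate point — then forces $u \leq \Phi$ throughout $B_{r}(x_{0})$, in particular $u(x) \leq \kappa|x-x_{0}|^{\gamma}$ near $x_{0}$. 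But this contradicts... actually it doesn't immediately; instead the contradiction comes from reversing the inequality: one shows $u \geq \Phi$ cannot fail, i.e. I would set it up so that $\Phi$ is a subsolution lying below $u$ on the boundary is the false hypothesis. Let me restate: assuming $\sup_{\partial B_r(x_0)} u < \kappa r^\gamma = \sup_{\partial B_r(x_0)}\Phi$, and since $u = \Phi = 0$ with vanishing gradient at $x_0$, the function $\Phi - u$ attains a positive maximum on $\partial B_r$ and vanishes at the center, so there is an interior region where $\Phi > u$; applying comparison (with $\Phi$ a subsolution and $u$ a supersolution of the reaction equation $\leq c_0|x-x_0|^\alpha w_+^\mu$, using $u$ solves $\gtrsim$ this by Assumption \ref{Section1:ass51}) on the open set $\{\Phi > u\}$, which is compactly contained away from where $\Phi-u$ attains its max, yields $\Phi \leq u$ there — a contradiction. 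Hence $\sup_{\partial B_r(x_0)} u \geq \kappa r^\gamma$, which is the claim.

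The main obstacle I anticipate is the comparison step at the critical/singular point $x_{0}$: the operator $|D w|^{p}F(D^{2}w,x)$ is degenerate (if $p>0$) or singular (if $-1<p<0$) exactly where $Dw$ vanishes, and at $x_{0}$ both $u$ and $\Phi$ have vanishing gradient, so the standard viscosity comparison principle does not apply off the shelf at that point. The resolution — which is the technically delicate part — is that $\Phi$ has nonzero gradient on $B_r(x_0)\setminus\{x_0\}$, so one performs the comparison on a punctured region or on $\{\Phi > u + \varepsilon\}$, which stays away from $x_0$, and then lets $\varepsilon \to 0$; alternatively one invokes the strong maximum principle / comparison results already used elsewhere in the paper for this class of operators. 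A secondary subtlety is passing from the hypothesis $f(|x|,u) \geq c_{0}|x|^{\alpha}u_{+}^{\mu}$ (weight centered at the origin) to a lower bound with weight $|x-x_{0}|^{\alpha}$ centered at the critical point; this requires either $x_{0}$ near the origin or the $f(|x|,t)\simeq|x-x_0|^\alpha t_+^\mu$ normalization, exactly as in Theorem \ref{Thm5}, and one should restrict $r$ to $(0,r^{*})$ precisely so that the comparison of the two weights costs only a harmless constant absorbed into the constants — this is why the radius threshold $r^{*}$ appears in the statement.
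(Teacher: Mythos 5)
Your choice of barrier $\Phi(x)=\kappa|x-x_{0}|^{\gamma}$ with $\gamma=\frac{2+p+\alpha}{1+p-\mu}$ and the stated value of $\kappa$ is exactly what the paper uses, and the key observation that the exponents $(1+p)\gamma-(p+2)$ and $\alpha+\mu\gamma$ coincide (note your displayed computation writes $(1+p)(\gamma-2)+\gamma$, which is off by $\gamma-p$; the correct product of $|D\Phi|^{p}\cdot F(D^{2}\Phi,x)$ carries $|x|^{p(\gamma-1)+(\gamma-2)}$) is indeed the identity that makes $\kappa$ drop out as claimed. Two things, however, are genuinely wrong in the way you assemble the comparison step.

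First, the sub/super roles are reversed throughout. Your computation $\Lambda(\kappa\gamma)^{1+p}(n+\gamma-2)\leq c_{0}\kappa^{\mu}$ yields $|D\Phi|^{p}F(D^{2}\Phi,x)\leq c_{0}|x|^{\alpha}\Phi_{+}^{\mu}$, i.e.\ $\Phi$ is a viscosity \emph{super}solution of the auxiliary reaction--diffusion inequality, not a subsolution; and since Assumption~\ref{Section1:ass51} gives $|Du|^{p}F(D^{2}u,x)=f(|x|,u)\geq c_{0}|x|^{\alpha}u_{+}^{\mu}$, $u$ is a \emph{sub}solution of the same, not a supersolution. The comparison principle you actually need is Lemma~\ref{lemma24} in the form: supersolution $\geq$ subsolution on $\partial B$ implies the same in $B$. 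The paper's argument uses this with $u_{1}=\Phi$, $u_{2}=u$: if, for contradiction, $u\leq\Phi$ on all of $\partial B_{r}(x_{0})$, Lemma~\ref{lemma24} forces $u\leq\Phi$ in $B_{r}(x_{0})$.

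Second, the contradiction cannot be extracted the way you attempt. You put $x_{0}$ in the critical set where $u(x_{0})=0=\Phi(x_{0})$, so after comparison you only get $u\leq\Phi$ on $B_r(x_0)$ with equality at the center; there is no contradiction from the center value alone. Your patch --- comparing on $\{\Phi>u\}$, claimed to be ``compactly contained away from where $\Phi-u$ attains its max'' --- is not correct: under your hypothesis $u<\Phi$ on $\partial B_{r}(x_{0})$, the set $\{\Phi>u\}$ includes a full neighborhood of $\partial B_{r}(x_{0})$ in $\overline{B_{r}(x_{0})}$, so it is not compactly contained, and on its boundary one does \emph{not} have $\Phi\leq u$. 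The paper avoids this by splitting into cases: it first takes $x_{0}\in\{u>0\}$, where $u(x_{0})>0=\Phi(x_{0})$ directly contradicts $u\leq\Phi$ in $B_{r}(x_{0})$; for $x_{0}\in\partial\{u>0\}$ it then approximates $x_{0}$ by points $x_{k}\in\{u>0\}$, runs the estimate on $B_{r_{k}}(x_{k})\subset B_{r}(x_{0})$ with $r_{k}=r-|x_{k}|$, and lets $k\to\infty$. You should supply this case separation and limiting argument; without it the proof does not close.
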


Next we will turn our attention to the critical case obtained as $ \mu \rightarrow 1+p $, i.e.,
\begin{equation}\label{Intro: eq7}
|Du|^{p} F(D^{2}u, x) = |x|^{\alpha} u^{1+p} \ \ \text{in} \ \ B_{1}, \ \ 0 \leq p < \infty.
\end{equation}

Utilizing the barrier argument for the critical equation \eqref{Intro: eq7}, Lemma \ref{Section2:lem1} and Remark \ref{Sec2:rk2}, we shall demonstrate that a viscosity solution to \eqref{Intro: eq7} cannot vanish at an interior point, unless it is identically zero.

\begin{Theorem}[{\bf Strong maximum principle}]
\label{Thm7}
Let $ \alpha \in (0, \infty) $ and $ u \in C^{0}(B_{1})$ be a viscosity solution to \eqref{Intro: eq7}. Suppose that there exists an interior point $ x_{0} \in B_{1} $ such that $ u(x_{0}) = 0 $. Then, $ u \equiv 0 $ in $ B_{1} $.
\end{Theorem}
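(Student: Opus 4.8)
The plan is a Hopf-type barrier argument that crucially exploits the fact that the critical equation \eqref{Intro: eq7} is homogeneous of degree $1+p$ in the unknown $u$. Suppose, towards a contradiction, that $u\not\equiv 0$ in $B_1$ while $u(x_0)=0$ for some $x_0\in B_1$. Since $u$ is continuous and nonnegative, the set $\Omega:=\{u>0\}$ is open, nonempty, and a proper subset of the connected ball $B_1$; hence $\partial\Omega\cap B_1\neq\emptyset$. Choosing $z_0\in\Omega$ close enough to $\partial\Omega\cap B_1$, one obtains a ball $B_R(z_0)$, with $R:=\text{dist}(z_0,\partial\Omega)>0$, such that $\overline{B_R(z_0)}\subset B_1$, $B_R(z_0)\subset\Omega$, and some $y_0\in\partial B_R(z_0)\cap\partial\Omega$, so that $u(y_0)=0$. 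Since $\overline{B_{R/2}(z_0)}$ is a compact subset of $\Omega$, we also put $m:=\min_{\partial B_{R/2}(z_0)}u>0$.

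Next I would build an explicit subsolution on the annulus $A:=B_R(z_0)\setminus\overline{B_{R/2}(z_0)}$. With $r:=|x-z_0|$, set $w(x):=c\,\bigl(e^{-\gamma r^2}-e^{-\gamma R^2}\bigr)$, where the constants $\gamma,c>0$ are chosen in that order. Computing the eigenvalues of $D^2w$ one checks that, for $\gamma$ large (depending only on $n,p,\lambda,\Lambda,R$, and \emph{not} on $c$), $\lambda w''+\Lambda(n-1)\tfrac{w'}{r}>0$ on $[R/2,R]$; hence, by \eqref{1a} (normalizing $F(0,x)\equiv 0$ for the critical operator),
\[
|Dw|^{p}F(D^2w,x)\;\geq\;|w'|^{p}\Bigl(\lambda w''+\Lambda(n-1)\tfrac{w'}{r}\Bigr)\;=\;c^{1+p}e^{-(1+p)\gamma r^2}\,\Phi_\gamma(r),
\]
with $\Phi_\gamma>0$ explicit and $\inf_{[R/2,R]}\Phi_\gamma\to\infty$ as $\gamma\to\infty$, whereas $|x|^{\alpha}w^{1+p}\leq c^{1+p}e^{-(1+p)\gamma r^2}$ on $\overline A$ because $\alpha>0$ forces $|x|^{\alpha}\leq 1$ on $\overline A\subset\overline{B_1}$. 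The common factor $c^{1+p}e^{-(1+p)\gamma r^2}$ cancels --- this is exactly where criticality $\mu=1+p$ is used --- so, fixing $\gamma$ large, $w$ satisfies $|Dw|^{p}F(D^2w,x)\geq|x|^{\alpha}w^{1+p}$ in $A$ in the classical, hence viscosity, sense. Then I would pick $c>0$ so small that $w\leq m$ on $\partial B_{R/2}(z_0)$; since $w\equiv 0$ on $\partial B_R(z_0)$, this gives $w\leq u$ on $\partial A$. As $|Dw|\neq 0$ on $\overline A$, the operator is nondegenerate there, and the comparison principle for \eqref{Intro: eq7} (the zeroth-order term $|x|^{\alpha}t^{1+p}$ being nondecreasing in $t\geq 0$) yields $w\leq u$ on $\overline A$.

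Finally I would derive the contradiction through a Hopf-type boundary-point computation at $y_0$. By the regularity theory of Section \ref{Section 2} (Lemma \ref{Section2:lem1} and Remark \ref{Sec2:rk2}), $u\in C^1_{\mathrm{loc}}(B_1)$; since $u\geq 0$ attains the value $0$ at the interior point $y_0$, necessarily $Du(y_0)=0$. On the other hand, $u-w\geq 0$ on $\overline A$ with $(u-w)(y_0)=0$, so, setting $\nu:=(y_0-z_0)/R$ and differentiating $t\mapsto(u-w)(y_0-t\nu)$ at $t=0^{+}$ (note $y_0-t\nu\in A$ for small $t>0$), the inner one-sided derivative is nonnegative, i.e. $Dw(y_0)\cdot\nu\geq Du(y_0)\cdot\nu=0$. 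But $Dw(y_0)\cdot\nu=-2c\gamma Re^{-\gamma R^2}<0$, a contradiction. Therefore $u$ cannot vanish at any interior point of $B_1$ unless $u\equiv 0$, which proves the theorem.

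\smallskip
\noindent\emph{Expected main obstacle.} The crux is the barrier construction: one must produce a subsolution whose differential inequality is insensitive to multiplication by positive constants --- possible only because of the $(1+p)$-homogeneity of the critical equation --- while keeping the barrier confined to an annular region where $|Dw|\neq 0$, so that the comparison principle for the degenerate/singular operator $|D\cdot|^{p}F(D^2\cdot,x)$ can be applied up to the boundary, comparing the nondegenerate barrier with a solution whose gradient may vanish. Securing that comparison principle in the present setting and the $C^1$ regularity of $u$ needed for the Hopf step are the two supporting technical ingredients; the hypothesis $\alpha>0$ is used only to keep the weight $|x|^{\alpha}$ bounded on the annulus --- for $\alpha\leq 0$ the weight may be unbounded there and both the construction and the conclusion can fail.
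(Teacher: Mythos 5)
Your proposal is correct, but it follows a genuinely different route from the paper. The paper's proof is a regularization argument: it replaces the weight $|x|^\alpha$ by $v_\epsilon(x)=(|x|+\epsilon)^\alpha$, solves the regularized Dirichlet problem by Perron's method, invokes the strong maximum principle for the critical equation with \emph{nondegenerate} weight (quoted from \cite{SLR21}), squeezes $0\le u_\epsilon\le u$ and $u_\infty=u$ via Lemma~\ref{lemma24}, and passes to the limit $\epsilon\to 0$. You instead run a direct Hopf-type argument on the annulus $B_R(z_0)\setminus\overline{B_{R/2}(z_0)}$ touching $\partial\{u>0\}$ from inside, exploiting the $(1+p)$-homogeneity of the critical nonlinearity so that the exponential barrier's scaling constant cancels; this is self-contained (no appeal to an external strong maximum principle) and produces a Hopf boundary-point inequality at $\partial\{u>0\}$ as a byproduct. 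Two technical remarks worth flagging in your writeup: (i) the comparison principle you invoke is Lemma~\ref{lemma24}, which is stated on balls, so you should note (or re-prove) that it extends to the annulus $A$ where you apply it --- this is routine since the proof in \cite{SLR21} is domain-agnostic, but it is a genuine dependence; (ii) your barrier computation normalizes $F(0,x)\equiv 0$ (Assumption~\ref{Section1:ass4}), which the theorem statement does not list explicitly but which is clearly implicit throughout the paper's Section~\ref{Section 8}; making this explicit would tighten the write-up. With those caveats acknowledged, the contradiction $0\le Dw(y_0)\cdot\nu=w'(R)<0$ is correctly derived from $Du(y_0)=0$ (which holds because $u\in C^{1}_{\mathrm{loc}}$ by Lemma~\ref{Section2:lem1} and Remark~\ref{Sec2:rk2}, and an interior minimum at height $0$ forces a vanishing gradient), and the proof closes.
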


{\bf Difficulties, strategy and novelty of this paper}. Here, we will briefly outline the novel aspects of this paper.

\begin{enumerate}[(1)]

\item It is noteworthy that the approach leading to our findings in \eqref{DCP} is new in the current literature. In contrast to studies of dead-core problems for single equations \cite{T16, SLR21}, one may encounter several challenges. One such difficulty is the absence of a comparison principle, which restricts the construction of suitable barrier functions to examine the non-degeneracy of the solution. However, in this article we provide a version of weak comparison principle, involving with general strong absorption terms, see Lemma \ref{Sec3:le1}. This conclusion generalizes the results in \cite[Lemma 2.1]{AT24}, and leads to non-degeneracy of solution, geometry properties of free boundary and some Liouville type results, as discussed in Sections \ref{Section 5}--\ref{Section 6}.

    \vspace{1mm}

\item Unlike the dead-core problem for fully nonlinear systems \cite{AT24}, our model \eqref{DCP} becomes significantly degenerate or singular along the free boundary $  \partial \{|(u,v)| >0\}  $ when $ |\nabla u| $ and $ |\nabla v| $ vanish at free boundary points. For this reason, we fully leverage the $ C^{1,\alpha'}_{\mathrm{loc}} (0<\alpha'<1) $ regularity of solutions to degenerate or singular fully nonlinear equation, and develop a refined analysis to solve this non-trivial task, see Section \ref{Section 3}. The findings of our study are applicable to systems comprising more than two equations.

    \vspace{1mm}

\item We emphasize that our results for problem \eqref{Intro:eq4} recover previous work (cf. Teixeira \cite{T16}, $ \alpha=0, p=0 $) and complement the result for degenerate fully nonlinear equation (cf. da Silva \cite{SLR21}, $ 0 \leq  p < \infty, 0 \leq \mu < 1+p, \alpha =0 $). To some extent, we obtain sharp and improved regularity estimate along the free boundary via alternative strategies and techniques. In effect, we used the Harnack inequality, Theorem \ref{Pre:Thm2.1} to show Theorem \ref{Thm5}, rather than employing the flatness estimate in Theorem \ref{Thm1}, see Section \ref{Section 7}. Apart from this, compared with \cite{T16} and \cite{SLR21}, Theorem \ref{Thm5} reveals an insightful and deep significance: $ \alpha $ and $ \mu $ can jointly    influence the regularity estimate of viscosity solution to \eqref{Intro:eq4} around the set of critical points, see Example \ref{example1} and Remark \ref{rk91} below.

    \vspace{1mm}

\item Finally, our results are novel even for the simpler prototype given by the operator $ F = G = \Delta $ in \eqref{DCP}, and $ p=0, f(|x|, u) = |x|^{\alpha}(u)_{+}^{\mu} $ in \eqref{Intro:eq4}. To summarise, the following Table \ref{table1} can explain how our result of this paper generalize the results of the previous works \cite{T16, SLR21, AT24}.

    \vspace{-5pt}

\begin{table}[!htp]
\newcommand{\tabincell}[2]{\begin{tabular}{@{}#1@{}}#2\end{tabular}}
   \centering
   {
    \linespread{0.27}  \selectfont
    \resizebox{\textwidth}{!}{
   \begin{tabular}{||c|c|c|c||}
        \hline
        Model PDEs & Compatibility condition & \tabincell{c}{Improved and sharp  \\ regularity estimates} & References   \\
        \hline
         $ F(D^{2}u,x) =  u_{+}^{\lambda_{1}} $   &  $ 0 \leq \lambda_{1} < 1 $    &  $  u \in C^{\frac{2}{1-\lambda_{1}}}_{\text{loc}} $  & \cite{T16}       \\
        \hline
          $ |Du|^{p} F(D^{2}u,x) = u_{+}^{\lambda_{1}} $  &  $  0 \leq p < \infty \ \text{and} \ 0 \leq \lambda_{1} < 1+p $      &      $ u \in C^{\frac{2+p}{1+p-\lambda_{1}}}_{\text{loc}}    $  &  \cite{SLR21}              \\
         \hline
         $ |Du|^{p} F(D^{2}u,x) = |x|^{\alpha} u_{+}^{\mu} $   &  \tabincell{c}{$ 0 \leq  p < \infty, 0 \leq \mu < 1+p \ \text{and} $ \\ \vspace{1mm}  $  0 \leq  \alpha < \infty $         } & $ u \in C^{\frac{2+p+\alpha}{1+p-\mu}}_{\text{loc}} $ & \text{Theorem} \ref{Thm5}      \\
        \hline
    $  (|Du|^{p} + a(x) |Du|^{q}) F(D^{2}u,x) = u_{+}^{\mu} $  &  \tabincell{c}{$ 0 < p \leq q < \infty, 0 < \mu < 1+p \ \text{and} \ $  \\ \vspace{1mm}  $ 0 \leq a(x) \in C^{0}(B_{1}) $  }    &    $ u \in C_{\text{loc}}^{\frac{2+p}{1+p-\mu}} $  &  \cite{SR20}      \\
     \hline
   $ \Delta_{\infty} u = |x|^{\alpha} u_{+}^{\mu} $  &  $ 0 < \alpha < \infty \ \text{and} \ 0 \leq \mu < 3 $ &   $ u \in C_{\text{loc}}^{\frac{4+\alpha}{3-m}} $  &   \cite{JSNS24}           \\
    \hline
 $ \text{div}(|Du|^{p-2}\mathscr{A}(x)Du) = |x|^{\alpha}u_{+}^{\mu}  $  &  \tabincell{c}{$ \alpha > -1-\mu, p>1 \ \text{and} $   \\ \vspace{1mm}   $   0 \leq \mu < p-1 $}  &   $  u  \in  C_{\text{loc}}^{\frac{p+\alpha}{p-1-\mu}}   $ &  \cite{SPRS24}      \\
    \hline
     $
  \begin{cases}
  F(D^{2}u,x) = (v_{+})^{\lambda_{1}}      \\
  G(D^{2}v,x) =  (u_{+})^{\lambda_{2}}         \\
  \end{cases} $  &   $ \lambda_{1}, \lambda_{2} \geq 0 \ \text{and} \ \lambda_{1} \lambda_{2} <1      $          &
  \tabincell{c}{$ u  \in C^{\frac{2(1+\lambda_{1})}{1-\lambda_{1}\lambda_{2}}}_{\text{loc}} $   \\  \vspace{1mm}
    $ v \in  C^{\frac{2(1+\lambda_{2})}{1-\lambda_{1}\lambda_{2}}}_{\text{loc}} $ }
       &      \cite{AT24}           \\
         \hline
          $
     \begin{cases}
    |Du|^{p} F(D^{2}u,x) = (v_{+})^{\lambda_{1}}      \\
    |Dv|^{q} G(D^{2}v,x) =  (u_{+})^{\lambda_{2}}         \\
     \end{cases}
     $  &  \tabincell{c}{ $ -1< p,q < \infty, \lambda_{1}, \lambda_{2} \geq 0 \  \text{and} \ $
     \\ $ \lambda_{1}\lambda_{2} < (1+p)(1+q) $}  &
  \tabincell{c}{$ u \in C^{\frac{(1+q)(2+p)+\lambda_{1}(2+q)}{(1+p)(1+q)-\lambda_{1}\lambda_{2}}}_{\text{loc}} $  \\  \vspace{1.5mm}
    $ v \in C^{\frac{(1+p)(2+q)+\lambda_{2}(2+p)}{(1+p)(1+q)-\lambda_{1}\lambda_{2}}}_{\text{loc}} $}
         &   \text{Theorem}  \ref{Thm1}           \\
         \hline
   \end{tabular}
   }
\caption{Improved regularity estimates across the free boundary for different PDEs/Systems. Here $ 0< \lambda \textbf{I}\rm{d}_{n}  \leq \mathscr{A}(\cdot) \leq \Lambda \textbf{I}\rm{d}_{n} $ is a symmetric matrix with sufficiently smooth entries (cf. \cite{PS07}).}
\label{table1}
  }
\end{table}
\end{enumerate}

\vspace{-15pt}

{\bf{State-of-the-art.}} In recent years, there has been growing research interest in degenerate or singular operator-driven equations.

Regarding the eigenvalue problem, Birindelli-Demengel in \cite{BD06} studied degenerate or singular equations of the form
\begin{equation*}
  |Du|^{\gamma} F(D^{2}u) = f(x,u) \lesssim \lambda |u|^{\gamma}u,
\end{equation*}
where $ F $ is fully nonlinear uniformly elliptic, $ \gamma > -1 $, and $ \lambda \in \mathbb{R}$. Lipschitz regularity results were also established in \cite{BD06} for bounded $ f $. Subsequently, for the case where $ \gamma > 0 $ and $ f $ is independent of $ u $, Imbert and Silvestre \cite{LS13} derived the interior $ C^{1,\alpha} $ regularity result. These findings have been extended to scenarios where $ f $ depends on the gradient with growth conditions of the form $ 1+ \gamma $, as well as to boundary $ C^{1,\alpha} $ regularity results under the assumption of sufficiently regular boundary data, in the works of \cite{BD15, BD14, BDL19}.

Parallel progress includes the contributions of Baasandorj {\em et al.} \cite{BBLL24, BBO23}, who established optimal $ C^{1,\alpha} $-regularity for viscosity solutions governed by a general class of degenerate or singular operators, along with $ C^{1}$-regularity for certain degenerate or singular fully nonlinear elliptic equations under minimal structural assumptions. Notably, Bezerra J\'{u}nior {\em et al.} \cite{BdaRR23} recently proved global gradient estimates for a class of fully nonlinear PDEs with spatially variable degeneracy. For a comprehensive survey of recent advances in fully nonlinear PDEs with unbalanced degeneracy laws, we refer readers to \cite{BdaRRV22}.

The field has also seen highly influential progress in dead-core problems. Building on preliminary results \cite{T16, ALT16, T18, SLR21, AT24}, two particularly interesting and important results have emerged in this line of research. da Silva {\em et al.} \cite{daRS19} provides regularity estimates for $ p$-dead-core problems and analyzes their asymptotic
behavior as $ p \rightarrow \infty $. More recently, Alcantara {\em et al.} \cite{AdaS25} presents geometric regularity estimates for quasilinear elliptic models in non-divergence form with strong absorption and related H\'{e}non-type equations.

{\bf Organization of the paper}. We organise the remaining content of this paper as follows.

In Section \ref{Section 2}, we recall the definition of viscosity solution to \eqref{DCP} and \eqref{Intro:eq4}, and provide some auxiliary results, which will be frequently used in this article. Particularly, we shall establish a new weak comparison principle for degenerate or singular fully nonlinear elliptic systems in a general setting.

In Section \ref{Section 3}, using a completely new flatness estimate, scaling techniques and an iterative argument, we derive a higher regularity of solutions to \eqref{DCP} at the free boundary points. Subsequently, we provide two consequences of Theorem \ref{Thm1}, which may be of own interest, see Corollary \ref{Section4:coro1} and \ref{Section4:coro2}.

In Sections \ref{Section 4} and \ref{Section 5}, we are devoted to analysis the radial solutions for \eqref{DCP}, and further using the radial solutions we construct and weak comparison principle, see Lemma \ref{Sec3:le1}, to obtain non-degeneracy of solutions. As a result, density estimate and finer geometric measure property are also discussed.

In Section \ref{Section 6}, we shall provide two Lioville-type results for entire solutions and study the limiting profile of blow-up solutions.

In Sections \ref{Section 7} and \ref{Section 8}, we prove the higher regularity along free boundary for equation \eqref{Intro:eq4}, non-degeneracy at critical points and strong maximum principle. Additionally, we also provide some applications, see Corollary \ref{Coro1}, and Theorems \ref{Section8:thm1}--\ref{Section8:thm2}.

In Section \ref{Section9}, we deliver several interesting examples to describe new features. Finally, \hyperref[Appendix A]{Appendix A} contains the proof of Theorem \ref{Se2:Thm2.2}.

{\bf Notations}. The following notations are also used in this article.
\begin{itemize}

\item For $ r >0 $, $ B_{r}(x) $ denotes the open ball of radius $ r $ centered at $ x $. We simply use $ B_{r} $ to denote the open ball $ B_{r}(0)$.

\item  $ \textbf{I}\mathrm{d_{n}} $ denotes the $n\times n$ identity matrix.

\item $ \mathscr{H}^{n}(E) $ denotes the $ n $-dimensional Hausdorff measure of a measurable set $ E $.

\item $ \text{int}(E) $ denotes the set of interior point of $ E $.

\item $ \text{Sym}(n) $ denotes the space of all $ n \times n $ symmetric matrices in $ \mathbb{R}^{n} $.

\item $ f \simeq g $ denotes there exists a constant $ C > 0 $ such that $ \frac{1}{C}g \leq f \leq Cg $.

\item $ C $ shall denote a generic positive constant which may vary in different inequalities.

\item ``a.e." denotes ``almost everywhere".
\end{itemize}

\vspace{3mm}

\section{Preliminaries}\label{Section 2}
In this section, we first review the definition of viscosity solution to \eqref{DCP} and \eqref{Intro:eq4}. Afterwards, we recall several lemmas concerning the interior regularity, Harnack inequality and comparison principle of viscosity solutions for degenerate fully nonlinear equations.

\begin{Definition}
A couple of functions $ u, v \in C^{0}(\overline{B}_{1}) $ is called a viscosity sub-solution [resp. super-solution] of \eqref{DCP}, if for every $ x_{0} \in B_{1} $ we have the following

$\mathrm{i)}$ either for all $ \varphi, \psi \in C^{2}(B_{1}) $ and $ u - \varphi, v- \psi $ attain local maximum [resp. minimum] at $ x_{0} $ and $ D\varphi(x_{0}), D \psi(x_{0}) \neq 0 $, we have
\begin{equation*}
  \left\{
     \begin{aligned}
     & |D\varphi(x_{0})|^{p} F(D^{2}\varphi(x_{0}),x_{0}) \geq v_{+}^{\lambda_{1}}(x_{0})    \\
     &  |D\psi(x_{0})|^{q} G(D^{2}\psi(x_{0}),x_{0}) \geq   u_{+}^{\lambda_{2}}(x_{0}) ,       \\
     \end{aligned}
     \right.
  \Bigg[resp.
  \left\{
     \begin{aligned}
     & |D\varphi(x_{0})|^{p} F(D^{2}\varphi(x_{0}),x_{0}) \leq v_{+}^{\lambda_{1}}(x_{0})    \\
     &  |D\psi(x_{0})|^{q} G(D^{2}\psi(x_{0}),x_{0}) \leq   u_{+}^{\lambda_{2}}(x_{0}) .        \\
     \end{aligned}
     \right. \Bigg]
\end{equation*}

$\mathrm{ii)}$ or there is an open ball $ B_{\delta}(x_{0}) \subset B_{1}$, $ \delta > 0 $ such that $ u \equiv a_{1} $ and $ v \equiv a_{2} $ in $ B_{\delta}(x_{0}) $ and
\begin{equation*}
 (a_{1})_{+}^{\lambda_{2}}, (a_{2})_{+}^{\lambda_{1}}  \leq 0 \ [resp. \ (a_{1})_{+}^{\lambda_{2}}, (a_{2})_{+}^{\lambda_{1}}  \geq 0 ]
\end{equation*}

A couple of functions $ u, v \in C^{0}(\overline{B}_{1}) $ is called a viscosity solution of \eqref{DCP}, if it is both a viscosity sub-solution and super-solution of \eqref{DCP}.
\end{Definition}

\begin{Definition}
Let $ f:B_{1} \times \mathbb{R} \rightarrow  \mathbb{R}    $ be a continuous and bounded function. We say that a function $ u \in C^{0}(B_{1}) $ is a viscosity sub-solution [resp. super-solution] of \eqref{Intro:eq4}, if for every $ x_{0} \in B_{1} $ we have the following

$\mathrm{i)}$ either for all $ \varphi \in C^{2}(B_{1}) $ such that $ u- \varphi $ has a local maximum [resp. minimum] at $ x_{0} $ and $ D\varphi(x_{0}) \neq 0 $, we have
\begin{equation*}
  |D\varphi(x_{0})|^{p} F(D^{2}\varphi(x_{0}),x_{0}) \geq f(|x_{0}|, u(x_{0})) \ \ [\text{resp}. \ |D\varphi(x_{0})|^{p} F(D^{2}\varphi(x_{0}),x_{0}) \leq f(|x_{0}|, u(x_{0}))].
\end{equation*}

$\mathrm{ii)}$ or there is an open ball $ B_{\delta}(x_{0}) \subset B_{1}$, $ \delta > 0 $ such that $ u \equiv c $ in $ B_{\delta}(x_{0}) $ and
$$ f(|x|, c) \leq 0 \ \ [resp. \ f(|x|, c) \geq 0] \ \ \text{for all} \ \ x \in B_{\delta}(x_{0}).       $$

A function $ u \in C^{0}(B_{1}) $ is a viscosity solution of \eqref{Intro:eq4}, if it is both a viscosity sub-solution and super-solution of \eqref{Intro:eq4}.
\end{Definition}

\vspace{1mm}

The following interior gradient H\"{o}lder regularity will be an important tool for our arguments.

\begin{Lemma}[{\bf \cite[Theorem 3.1]{ART15}}]\label{Section2:lem1}
Assume $ u \in C^{0}(B_{1}) $ is a viscosity solution to
\begin{equation*}
  |Du|^{p} F(D^{2}u,x) = f(x)  \ \ \text{in} \ \ B_{1},
\end{equation*}
where $ p > 0 $, $ f\in C^{0}(B_{1}) \cap L^{\infty}(B_{1}) $, and $ F $ satisfies \eqref{1a} and \eqref{2a}. Then $ u \in C^{1,\alpha}_{\mathrm{loc}}(B_{1}) $ for some $ \alpha \in (0,1) $ with the estimate
$$ ||u||_{C^{1,\alpha}(B_{1/2})} \leq C \bigg( ||u||_{L^{\infty}(B_{1})} + ||f||_{L^{\infty}(B_{1})}^{\frac{1}{1+p}}    \bigg),      $$
where $ C= C(p, \lambda, \Lambda, n)   $.
\end{Lemma}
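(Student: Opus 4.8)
The plan is to obtain the estimate through a compactness-based improvement-of-flatness scheme, adapting Caffarelli's $C^{1,\alpha}$ theory to the degeneracy carried by the factor $|Du|^{p}$, along the lines of \cite{ART15}.

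First I would normalize. Replacing $u$ by $u/K$ with $K:=\|u\|_{L^{\infty}(B_{1})}+\varepsilon_{0}^{-1}\|f\|_{L^{\infty}(B_{1})}^{1/(1+p)}$ and $F(M,x)$ by the operator $\widetilde F(M,x):=K^{-1}F(KM,x)$, which still satisfies Assumption \ref{Section1:ass1}--\ref{Section1:ass2} with the \emph{same} ellipticity constants $\lambda,\Lambda$, it suffices to prove $\|u\|_{C^{1,\alpha}(B_{1/2})}\le C$ under the normalization $\|u\|_{L^{\infty}(B_{1})}\le1$ and $\|f\|_{L^{\infty}(B_{1})}\le\varepsilon_{0}$, for a small universal $\varepsilon_{0}>0$ to be fixed below. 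I would also record as a preliminary input the interior $C^{0,\gamma}$ (indeed Lipschitz) estimate for viscosity solutions of $|Du|^{p}F(D^{2}u,x)=f$, proved via an Ishii--Lions type doubling argument tailored to the degeneracy; this supplies the compactness used repeatedly.

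The core of the argument is a one-step flatness improvement: there exist universal constants $\rho\in(0,1/2)$, $\bar\alpha\in(0,1)$, $\varepsilon_{0}>0$ and $C_{0}>0$ such that if $u$ is a normalized solution whose coefficients are sufficiently close to constant on $B_{1}$ and which satisfies $\sup_{B_{1}}|u-\ell|\le1$ for some affine $\ell$ with $|D\ell|\le C_{0}$, then there is an affine $\ell'$ with $|D\ell'-D\ell|\le C_{0}\rho^{\bar\alpha}$ and $\sup_{B_{\rho}}|u-\ell'|\le\rho^{1+\bar\alpha}$. I would prove this by contradiction and compactness: along a sequence violating the claim, with right-hand sides $f_{k}\to0$ and coefficient oscillations tending to $0$, the functions $w_{k}:=u_{k}-\ell_{k}$ solve equations of the form $|Dw_{k}+b_{k}|^{p}F_{k}(D^{2}w_{k},x)=f_{k}$ with $|b_{k}|\le C_{0}$; by the H\"older input and Arzel\`{a}--Ascoli, a subsequence $w_{k}\to w_{\infty}$ locally uniformly, and by stability of viscosity solutions under uniform limits --- where one must check carefully the case of test functions whose gradient equals $-b_{\infty}$ at the contact point, using the second alternative in the definition of viscosity solution and the continuity of $\xi\mapsto|\xi|^{p}$ --- the limit solves a constant-coefficient homogeneous equation $|Dw_{\infty}+b_{\infty}|^{p}F_{\infty}(D^{2}w_{\infty})=0$ in $B_{3/4}$. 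Such solutions are $C^{1,\bar\alpha_{0}}_{loc}$ with universal bounds (the equation is uniformly elliptic on $\{Dw_{\infty}\ne-b_{\infty}\}$, where Caffarelli's $C^{1,\bar\alpha_{0}}$ theory applies, and a complementary analysis controls the critical set), so approximating $w_{\infty}$ near the origin by its first-order Taylor polynomial and choosing $\bar\alpha<\bar\alpha_{0}$ and $\rho$ small enough produces the desired $\ell'$, contradicting the failure of the claim for large $k$.

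Finally I would iterate. Setting $\ell_{0}:=0$ and, inductively, rescaling $u_{k}(x):=\rho^{-(1+\bar\alpha)}\big(u_{k-1}(\rho x)-\ell'_{k-1}(\rho x)\big)$, one checks that $u_{k}$ is again a normalized viscosity solution of an equation in the same structural class (of the form $|Dw+\eta|^{p}F(D^{2}w,x)=g$ with $|\eta|\le C_{0}$), with coefficient oscillation no larger than before, and with right-hand side still bounded by $\varepsilon_{0}$ provided $\bar\alpha$ is taken small enough that the scaling exponent $1+\bar\alpha(1-p)$ of the right-hand side is nonnegative (automatic when $0<p\le1$, and requiring $\bar\alpha<1/(p-1)$ when $p>1$). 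Applying the one-step claim at each stage yields affine functions $\ell_{k}$ with $|D\ell_{k+1}-D\ell_{k}|\le C\rho^{k\bar\alpha}$ --- a geometric series, so $D\ell_{k}$ converges --- and $\sup_{B_{\rho^{k}}}|u-\ell_{k}|\le\rho^{k(1+\bar\alpha)}$; interpolating between consecutive dyadic scales gives $|u(x)-\ell_{\infty}(x)|\le C|x|^{1+\bar\alpha}$ for $x$ near $0$, where $\ell_{\infty}$ is the first-order Taylor polynomial of $u$ at $0$. Since the normalized hypotheses are invariant under translation within $B_{1/2}$, the same pointwise estimate holds, with a uniform constant, at every point of $B_{1/2}$, which gives $u\in C^{1,\bar\alpha}(B_{1/2})$ with the asserted bound after undoing the normalization; taking $\alpha=\bar\alpha$ completes the proof. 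I expect the main obstacles to lie entirely in the compactness step: the a priori $C^{0,\gamma}$ (or Lipschitz) bound, whose proof needs a doubling argument adjusted to the $|Du|^{p}$ degeneracy, and the stability passage to the limit, where test functions with critical contact points must be treated with care.
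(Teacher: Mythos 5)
This Lemma is quoted from \cite{ART15} (Theorem 3.1); the paper itself offers no proof, so the comparison is against the standard argument of Imbert--Silvestre and Ara\'{u}jo--Ricarte--Teixeira. Your overall strategy --- normalize, establish a priori H\"older/Lipschitz estimates for compactness, prove a one-step improvement-of-flatness by contradiction and stability of viscosity solutions, then iterate --- is indeed the approach those works take, and the structure you lay out is essentially correct.

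However, there is a concrete scaling error in the iteration step that affects the allowed exponent. Tracking the dilation $u_{k}(x)=\rho^{-k(1+\bar\alpha)}\bigl(u(\rho^{k}x)-\ell_{k}(\rho^{k}x)\bigr)$ through the operator $|Dw|^{p}F(D^{2}w,x)$: the gradient picks up a factor $\rho^{-k\bar\alpha}$, while the Hessian picks up $\rho^{k(1-\bar\alpha)}$, so after renormalizing $F$ to keep the same ellipticity constants the right-hand side at stage $k$ scales like $\rho^{k(1-\bar\alpha(1+p))}\,f(\rho^{k}x)$, not like $\rho^{k(1+\bar\alpha(1-p))}$ as you write. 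The consistency requirement is therefore $\bar\alpha\le\tfrac{1}{1+p}$ --- a genuine constraint for \emph{every} $p>0$, not ``automatic when $0<p\le1$''. This matches exactly the remark immediately after the Lemma, where the sharp exponent is recorded as $\alpha=\min\{\alpha_{0}^{-},\tfrac{1}{1+p}\}$; your condition $\bar\alpha<\tfrac{1}{p-1}$ (for $p>1$) is strictly weaker and would not keep the right-hand side controlled along the iteration. One more point worth sharpening: the passage to the limit in the compactness step hinges on the \emph{cutting lemma} of Imbert--Silvestre (cited in this paper as \cite[Lemma 6]{LS13}), which upgrades $|Dw_{\infty}+b_{\infty}|^{p}F_{\infty}(D^{2}w_{\infty})=0$ to $F_{\infty}(D^{2}w_{\infty})=0$ outright, removing the degeneracy at the critical set; your phrase ``a complementary analysis controls the critical set'' gestures at this but leaves the key mechanism unstated.
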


\vspace{1mm}

\begin{remark}
We stress that the H\"{o}lder exponent $ \alpha $ is optimal, that is, $$ \alpha = \min \bigg\{\alpha_{0}^{-}, \frac{1}{1+p}\bigg\}, $$
where $ \alpha_{0} $ is the optimal H\"{o}lder exponent for solutions to constant coefficient, homogeneous equation $ F(D^{2}u) = 0 $.
\end{remark}

\vspace{1mm}

\begin{remark}\label{Sec2:rk2}
Lemma \ref{Section2:lem1} is still true for case $ -1 <p <0 $, see \cite[Theorem 2.1]{BPRT20}, or \cite[Theorem 1.1]{BBLL24}.
\end{remark}

\vspace{1mm}

In order to access improved regularity estimate, we will also assume the following conditions on $ f $.

\vspace{1mm}

\label{Section2:ass1}  {\bf (A6).} We suppose that $ f $ is H\"{o}lder continuous and $ f(0) = 0 $, namely, $$ |f(x)| \leq K|x|^{\widetilde{\lambda}}  $$ holds for some $ K > 0 $ and $ \widetilde{\lambda} \in (0,1) $.

\vspace{1mm}

An improved gradient H\"{o}lder estimate is formulated as follows.

\begin{Lemma}[{\bf \cite[Theorem 1]{T24}}]\label{Section2:lem2}
Let $ u \in C^{0}(B_{1}) $ be a viscosity solution to
\begin{equation*}
  |Du|^{p} F(D^{2}u,x) = f(x)  \ \ \text{in} \ \ B_{1},
\end{equation*}
where $ p > 0 $, and $ F $ satisfies \eqref{1a} and \hyperref[Section1:ass2b]{\bf (A2b)}. Assume \hyperref[Section2:ass1]{\bf (A6)} also holds, then $ u $ is of class $ C^{1,\min\{\alpha_{0}^{-},\frac{1+\widetilde{\lambda}}{1+p} \}} $ at the origin, with the estimate
\begin{equation*}
  |u(x)-u(0)-\nabla u(0)\cdot x| \leq C_{\beta}|x|^{1+\beta}
\end{equation*}
for all $ x \in B_{\frac{1}{4}}(0) $, where
\begin{equation*}
  \beta = \min\bigg\{\alpha_{0}^{-},\frac{1+\widetilde{\lambda}}{1+p} \bigg\}.
\end{equation*}
\end{Lemma}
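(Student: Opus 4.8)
The plan is to prove the pointwise estimate by a compactness-based \emph{flatness improvement} combined with a geometric iteration, in the spirit of the proof of Lemma \ref{Section2:lem1} and of Teixeira's work; the genuinely new point is the careful bookkeeping of the degenerate rescaling that produces the exponent $\tfrac{1+\lambda}{1+p}$. By Lemma \ref{Section2:lem1}, $u\in C^{1,\alpha}_{\mathrm{loc}}$, so $\nabla u(0)$ is well defined, and I would split into two regimes. If $\nabla u(0)\neq0$, continuity of $\nabla u$ gives $|\nabla u|\geq c>0$ on some $B_{r_0}(0)$; there $u$ solves the uniformly elliptic equation $F(D^2u,x)=g(x)$ with $g=|Du|^{-p}f$, $g(0)=0$ and $|g(x)|\leq c^{-p}K|x|^\lambda$, and the classical pointwise Schauder-type estimates for uniformly elliptic equations with continuous coefficients give that $u$ is $C^{1,\gamma}$ at the origin for every $\gamma<\alpha_0$, which already exceeds $\beta$. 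So the substance of the proof lies in the degenerate regime $\nabla u(0)=0$ (then, WLOG, also $u(0)=0$), where the affine approximant is $\ell\equiv0$ and the gain of regularity occurs.

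Fix $\beta<\min\{\alpha_0,\tfrac{1+\lambda}{1+p}\}$. The core step is a \textbf{flatness improvement lemma}: there exist universal $\rho\in(0,\tfrac12)$ and $\varepsilon_0>0$ such that any viscosity solution of $|Du|^pF(D^2u,x)=f$ in $B_1$ with $\|u\|_{L^\infty(B_1)}\le1$, $u(0)=|\nabla u(0)|=0$, $\|f\|_{L^\infty(B_1)}\le\varepsilon_0$ and coefficient oscillation $\le\varepsilon_0$ satisfies $\sup_{B_\rho}|u|\le\rho^{1+\beta}$. I would prove this by contradiction and compactness: a violating sequence $(u_j,f_j,F_j)$ is precompact in $C^1_{\mathrm{loc}}(B_1)$ by the uniform estimate of Lemma \ref{Section2:lem1}; passing to limits, $F_j\to F_\infty$ (a constant-coefficient, uniformly elliptic operator) and, by stability of viscosity solutions with $f_j\to0$, the limit $u_\infty$ solves the homogeneous degenerate equation $|Du_\infty|^pF_\infty(D^2u_\infty)=0$. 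Since a $C^1$ viscosity solution of this \emph{homogeneous} equation is a viscosity solution of $F_\infty(D^2u_\infty)=0$ (so the inhomogeneous $\tfrac1{1+p}$-cap does not appear), the definition of $\alpha_0$ yields $\|u_\infty\|_{C^{1,\alpha_0}(B_{1/2})}\le C$, and with $u_\infty(0)=\nabla u_\infty(0)=0$ this forces $\sup_{B_\rho}|u_\infty|\le C\rho^{1+\alpha_0}<\rho^{1+\beta}$ for $\rho$ small (as $\beta<\alpha_0$), contradicting $\sup_{B_\rho}|u_\infty|\ge\rho^{1+\beta}$.

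Granting the flatness improvement, I would first normalize via the scaling $\tilde u(x)=u(\tau x)/\kappa$, $\tilde F(M,x)=\tfrac{\tau^2}{\kappa}F(\tfrac{\kappa}{\tau^2}M,\tau x)$, $\tilde f(x)=\tfrac{\tau^{2+p}}{\kappa^{1+p}}f(\tau x)$, choosing $\kappa,\tau$ so that $\|\tilde u\|_{L^\infty(B_1)}\le1$ and both $\|\tilde f\|_\infty$ and the coefficient oscillation are $\le\varepsilon_0$. Then one iterates: with $u_k(x)=u(\rho^kx)/\rho^{k(1+\beta)}$, a direct computation shows $u_k$ solves $|Du_k|^pF_k(D^2u_k,x)=f_k$, where $F_k$ is uniformly elliptic with the same constants, $\omega_{F_k}(x,y)\le K_1\rho^{k\bar\alpha}|x-y|^{\bar\alpha}$, and $f_k(x)=\rho^{k(1-\beta(1+p))}f(\rho^kx)$, so $|f_k(x)|\le K\rho^{k(1+\lambda-\beta(1+p))}|x|^\lambda$; since $1+\lambda-\beta(1+p)\ge0$ by Assumption \ref{Section2:ass1} and $\beta\le\tfrac{1+\lambda}{1+p}$, all hypotheses of the flatness improvement persist along the iteration, and an induction gives $\sup_{B_{\rho^k}}|u|\le\rho^{k(1+\beta)}$ for every $k$. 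For general $r\in(0,\tfrac14)$, choosing $k$ with $\rho^{k+1}<r\le\rho^k$ yields $\sup_{B_r}|u|\le\rho^{-(1+\beta)}r^{1+\beta}$, i.e.\ $|u(x)-u(0)-\nabla u(0)\cdot x|\le C_\beta|x|^{1+\beta}$ after undoing the normalization; the endpoint $\beta=\tfrac{1+\lambda}{1+p}$ (when it is $<\alpha_0$) is recovered by the usual additional small-perturbation approximation, the rescaled data $f_k$ then being merely uniformly bounded.

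The step I expect to be the main obstacle is the identification and regularity of the blow-up limit: one must verify that the $C^1_{\mathrm{loc}}$-limit of solutions of $|Du_j|^pF_j(D^2u_j,x)=f_j$ with $f_j\to0$ is genuinely a viscosity solution of the \emph{constant-coefficient, non-degenerate} equation $F_\infty(D^2u_\infty)=0$, so that the Krylov--Safonov exponent $\alpha_0$, rather than the smaller $\min\{\alpha_0^-,\tfrac1{1+p}\}$ coming from Lemma \ref{Section2:lem1} applied directly to the limit, drives the gain; this is precisely what makes the final exponent $\tfrac{1+\lambda}{1+p}$ instead of $\tfrac1{1+p}$. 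The remaining, more routine, point is keeping the sign $1+\lambda-\beta(1+p)\ge0$ along the iteration so that the rescaled right-hand side does not blow up, which is exactly where the compatibility hypothesis $\lambda<1$ (hence $\lambda<1+p$, so $\tfrac{1+\lambda}{1+p}<\tfrac{2}{1+p}$) of Assumption \ref{Section2:ass1} is used.
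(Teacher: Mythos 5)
The paper cites this as \cite[Theorem~1]{T24} without proof, so there is no internal argument to compare against; here is an assessment of your sketch on its own terms. Your plan is the standard compactness/flatness-improvement strategy for pointwise Schauder-type estimates, and the bookkeeping is sound: under $u_k(x)=u(\rho^k x)/\rho^{k(1+\beta)}$ the ellipticity constants of $F_k(M,x)=\rho^{k(1-\beta)}F(\rho^{k(\beta-1)}M,\rho^k x)$ are unchanged, the coefficient oscillation decays like $\rho^{k\bar\alpha}$, and the right-hand side picks up $\rho^{k(1+\lambda-\beta(1+p))}$, so the iteration survives exactly when $\beta\le\tfrac{1+\lambda}{1+p}$. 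You also correctly isolate the genuine technical content: in the compactness step one must upgrade ``viscosity solution of $|Du_\infty|^p F_\infty(D^2 u_\infty)=0$'' to ``viscosity solution of $F_\infty(D^2 u_\infty)=0$,'' so that the gain is driven by $\alpha_0$ rather than by the weaker $\min\{\alpha_0^-,\tfrac1{1+p}\}$. This is the cutting lemma of Imbert--Silvestre, \cite[Lemma~6]{LS13} (see also \cite[Lemma~1.7]{SV21}), which is precisely what the present paper itself invokes in the compactness argument for Lemma~\ref{Se4:lem1}; citing it would close that step of your outline. One small miscalibration: the inequality $1+\lambda-\beta(1+p)\ge0$ is simply the choice $\beta\le\tfrac{1+\lambda}{1+p}$, not a consequence of $\lambda<1$; the hypothesis $\lambda\in(0,1)$ in Assumption~\ref{Section2:ass1} just ensures $\lambda$ is an admissible H\"older exponent. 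Also, in the non-degenerate branch $\nabla u(0)\neq0$, full Schauder ($C^{2,\gamma}$) improvement requires convexity or concavity of $F$; the $C^{1,\gamma}$ bound for $\gamma<\alpha_0$ that you actually use does follow without it from Caffarelli's interior estimate, so that branch is fine as stated.
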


\begin{remark}
Lemma \ref{Section2:lem2} remains valid for any ball $ B' \Subset B_{1} $. Moreover, it demonstrates that the H\"{o}lder continuity of $ f $ quantitatively influences the regularity of viscosity solutions to degenerate equation, see \cite{T24} for further details.
\end{remark}

The next assertion will be appeared in the analysis of Lemma \ref{Se4:lem1} in Section \ref{Section 3}.
\begin{Lemma}[{\bf \cite[Proposition 1.1]{BD15}}]
\label{Section2:lem3}
  Suppose that, for $ p \in (-1, 0) $, $ u $ is a viscosity solution of
 \begin{equation*}
  |Du|^{p} F(D^{2}u,x) = f(x)  \ \ \text{in} \ \ B_{1},
\end{equation*}
then $ u $ is a viscosity solution of
\begin{equation*}
  F(D^{2}u,x) = f(x) |Du|^{-p}  \ \ \text{in} \ \ B_{1}.
\end{equation*}
\end{Lemma}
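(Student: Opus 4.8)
The plan is to establish the equivalence by a direct comparison of the two viscosity formulations, exploiting that for $-1<p<0$ the exponent $-p$ lies in $(0,1)$. Consequently the map $\xi\mapsto|\xi|^{-p}$ is continuous on $\mathbb{R}^{n}$ and vanishes at the origin, so that $g(x,\xi):=f(x)|\xi|^{-p}$ is a genuinely continuous right-hand side; in particular, viscosity solutions of $F(D^{2}u,x)=f(x)|Du|^{-p}$ are understood in the standard sense, testing against all $\varphi\in C^{2}$ with no restriction on $D\varphi$.

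First I would dispatch the test functions with non-vanishing gradient. If $\varphi\in C^{2}(B_{1})$ is such that $u-\varphi$ has a local maximum [resp. minimum] at $x_{0}$ with $D\varphi(x_{0})\neq 0$, then $|D\varphi(x_{0})|^{-p}$ is a strictly positive real number, so $|D\varphi(x_{0})|^{p}F(D^{2}\varphi(x_{0}),x_{0})\geq f(x_{0})$ holds if and only if $F(D^{2}\varphi(x_{0}),x_{0})\geq f(x_{0})|D\varphi(x_{0})|^{-p}$, and likewise with the reversed inequalities. Hence on this class of test functions the sub-/super-solution inequalities for $|Du|^{p}F(D^{2}u,x)=f(x)$ and for $F(D^{2}u,x)=f(x)|Du|^{-p}$ are literally the same.

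It remains to reconcile the two definitions at test functions with vanishing gradient and on balls where $u$ is locally constant. If $u\equiv c$ on some $B_{\delta}(x_{0})\subset B_{1}$, then $g(\cdot,0)=f(\cdot)|0|^{-p}\equiv 0$ there; any $\varphi\in C^{2}$ touching $u$ from above [resp. below] at a point of $B_{\delta}(x_{0})$ necessarily has vanishing gradient and, by the interior extremum, $D^{2}\varphi\geq 0$ [resp. $\leq 0$] at that point, so degenerate ellipticity of $F$ (together with $F(0,\cdot)=0$, Assumption \ref{Section1:ass4}, where it is in force) yields the correct sign of $F(D^{2}\varphi,\cdot)$, which is exactly the constancy clause in the definition of a viscosity solution of $|Du|^{p}F(D^{2}u,x)=f(x)$. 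For a non-constant $\varphi$ touching $u$ from above at $x_{0}$ with $D\varphi(x_{0})=0$ — which, after adding $|x-x_{0}|^{4}$ to $\varphi$, we may assume realizes a strict local maximum — I would perturb $\varphi$ by small lower-order smooth terms so that $u$ minus the perturbed test function is touched from above at points $x_{\sigma}\to x_{0}$ at which the gradient of the perturbation is non-zero; applying at $x_{\sigma}$ the inequality already established and letting $\sigma\to 0$, the continuity of $F$, of $x\mapsto D^{2}\varphi(x)$, and of $\xi\mapsto|\xi|^{-p}$ forces $F(D^{2}\varphi(x_{0}),x_{0})\geq 0=f(x_{0})|D\varphi(x_{0})|^{-p}$; the supersolution case is symmetric.

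The delicate point is this last step: arranging the perturbation so that the shifted contact point genuinely carries a non-vanishing gradient while still converging to $x_{0}$ once the perturbation is switched off. This is precisely where $-1<p<0$ is used essentially — it guarantees $|\xi|^{-p}\to 0$ as $\xi\to 0$, which fails for $p>0$ — and the construction is carried out in \cite[Proposition 1.1]{BD15}, whose argument I would follow.
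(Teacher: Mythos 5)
The paper itself does not prove Lemma \ref{Section2:lem3}: it is imported directly as \cite[Proposition 1.1]{BD15}, so there is no in-paper argument to compare against. Your sketch is a faithful outline of the Birindelli--Demengel mechanism: on test functions with $D\varphi(x_0)\neq 0$ the two viscosity formulations coincide trivially (divide by the strictly positive number $|D\varphi(x_0)|^{-p}$), and the entire content lies in the zero-gradient contact case, where $-1<p<0$ enters exactly as you say, namely through the continuity and vanishing of $\xi\mapsto|\xi|^{-p}$ at $\xi=0$, so that the inequalities at perturbed contact points degenerate to $F(D^2\varphi(x_0),x_0)\geq 0$ (resp.\ $\leq 0$) in the limit.

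Two caveats. First, the paragraph on locally constant $u$ quietly imports Assumption \ref{Section1:ass4} ($F(0,\cdot)=0$), which is \emph{not} among the hypotheses of Lemma \ref{Section2:lem3}; without some normalization of this kind the equivalence genuinely fails on dead-core components --- a constant can satisfy the restricted definition while failing the standard one if $F(0,x)\neq 0$ --- so this should be surfaced as an extra hypothesis (or absorbed into $F$ and $f$ by replacing $F$ with $F-F(0,\cdot)$), not treated as automatically ``in force.'' Second, and more substantively, the decisive step --- constructing a perturbation of $\varphi$ whose contact point tends to $x_0$ \emph{and} at which the gradient of the perturbed test function (not merely of the perturbation, as written) is non-zero --- is asserted and then delegated to \cite{BD15} rather than executed. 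This is where the proof actually lives and it requires a case analysis (essentially: either $u$ coincides with $u(x_0)$ on a whole neighborhood, handled separately, or one finds a direction along which $u$ departs from that level so that a small linear tilt yields a non-degenerate contact). As an outline the proposal is on target and identifies the right ingredients, but as written it does not yet constitute a self-contained proof.
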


We proceed to establish a Harnack inequality for a class of degenerate fully nonlinear equations, which can be readily adapted to the proof of Theorem \ref{Thm5}.

\begin{Theorem}[{\bf \cite[Theorem 1.1]{DFQ10}}]
\label{Pre:Thm2.1}
Let $ u $ be a non-negative viscosity solution to
  \begin{equation*}
  |Du|^{p} F(D^{2}u,x) = f \in C^{0}(B_{1})  \cap L^{\infty}(B_{1}),
  \end{equation*}
where $ 0 \leq p < \infty  $. Then

\begin{equation*}
  \sup_{B_{1/2}} u(x) \leq  C \bigg( \inf_{B_{1/2}} u(x) + ||f||_{L^{\infty}(B_{1})}^{\frac{1}{1+p}}  \bigg).
\end{equation*}
\end{Theorem}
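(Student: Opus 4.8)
This is cited as \cite[Theorem 1.1]{DFQ10}, so the plan is only to recall the strategy of its proof, which is the Krylov--Safonov/Caffarelli--Cabr\'e iteration adapted to the degenerate--singular operator $u\mapsto|Du|^{p}F(D^{2}u,x)$. First I would normalize: since replacing $u$ by $u/k$ turns $f$ into $k^{-(1+p)}f$, the quantity $\|f\|_{L^{\infty}}^{1/(1+p)}$ scales like $u$, and it is enough to produce constants $\varepsilon_{0},C_{0}>0$, depending only on $n,p,\lambda,\Lambda$, such that every non-negative viscosity solution $u$ in $B_{1}$ with $\|f\|_{L^{\infty}(B_{1})}\le\varepsilon_{0}$ and $\inf_{B_{1/4}}u\le 1$ satisfies $\sup_{B_{1/4}}u\le C_{0}$; a Harnack chain then upgrades this to the claimed inequality on $B_{1/2}$. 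I would also record that whenever a smooth function touches $u$ from below (resp. above) at a point with non-vanishing gradient, Assumption \ref{Section1:ass1} together with boundedness of $x\mapsto F(0,x)$ (which follows from Assumption \ref{Section1:ass2}) yields a one-sided Pucci inequality $|Du|^{p}\mathcal{M}^{-}_{\lambda,\Lambda}(D^{2}u)\le |f|+\|F(0,\cdot)\|_{\infty}$ (resp. with $\mathcal{M}^{+}$ and a lower bound), so the $x$-dependence of $F$ is absorbed into the right-hand side.

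The analytic core is an Alexandrov--Bakelman--Pucci estimate for the degenerate/singular Pucci operator: if $w$ is a viscosity supersolution of $|Dw|^{p}\mathcal{M}^{-}_{\lambda,\Lambda}(D^{2}w)\le g$ in a ball $B$ with $w\ge 0$ on $\partial B$, then $\sup_{B}w^{-}$ is controlled by an integral of $|g|$, against a suitable weight, over the lower contact set $\{w=\Gamma_{w}\}$ of the convex envelope $\Gamma_{w}$ of $-w^{-}$. The key observation making this work is that on $\{w=\Gamma_{w}\}$, away from the single minimizing point, the slopes $|D\Gamma_{w}|$ are bounded below by $c\,\sup_{B}w^{-}/\mathrm{diam}(B)$, so the weight $|D\Gamma_{w}|^{p}$ is there comparable to a constant and the estimate is effectively uniformly elliptic. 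Feeding this into the Caffarelli--Cabr\'e scheme --- a fixed auxiliary subsolution vanishing on $\partial B_{1}$ and negative on a small inner ball, plus the Calder\'on--Zygmund cube decomposition --- I would obtain the weak Harnack (power-decay) estimate
\[
\big|\{x\in B_{1/4}:u(x)>t\}\big|\le C\,t^{-\varepsilon}\qquad(t>0),
\]
equivalently $\left(\int_{B_{1/4}}u^{\varepsilon}\,dx\right)^{1/\varepsilon}\le C\big(\inf_{B_{1/4}}u+\|f\|_{L^{\infty}(B_{1})}^{1/(1+p)}\big)$. The degenerate set causes no trouble: where $u$ is not touched by smooth functions with non-zero gradient it is locally constant by the second alternative of the viscosity definition, and there the measure estimate is immediate.

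Dually, a symmetric ABP-plus-covering argument applied to $u$ as a subsolution gives the local maximum principle $\sup_{B_{1/4}}u\le C\big(\|u^{+}\|_{L^{\varepsilon}(B_{1/2})}+\|f\|_{L^{\infty}(B_{1})}^{1/(1+p)}\big)$ with the same exponent $\varepsilon$. Since the $u$ in the statement is a two-sided solution, combining the weak Harnack estimate with this bound yields the Harnack inequality on small concentric balls, and covering $B_{1/2}$ by finitely many such balls and iterating the constant produces the stated estimate $\sup_{B_{1/2}}u\le C\big(\inf_{B_{1/2}}u+\|f\|_{L^{\infty}(B_{1})}^{1/(1+p)}\big)$.

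The main obstacle throughout is the gradient weight $|Du|^{p}$, which is degenerate when $p>0$ and singular when $-1<p<0$ precisely where $|Du|$ is small; the classical ABP estimate, barriers and cube decompositions are not uniformly elliptic there. The resolution, which is the substance of \cite{DFQ10} (building on Imbert's ABP estimate for such operators), is that in each contact or touching argument the competing function has gradient bounded away from $0$ on the part of the contact set that matters, so only a uniformly elliptic estimate is ever needed, while the remaining region is absorbed by the locally-constant alternative built into the notion of viscosity solution.
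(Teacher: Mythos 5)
The paper offers no proof of this statement---it is quoted directly from \cite{DFQ10}---so there is nothing internal to compare against; your sketch correctly reproduces the strategy of the cited literature: reduction to Pucci-type differential inequalities (absorbing the $x$-dependence via boundedness of $F(0,\cdot)$), the ABP estimate for the weighted operator with the weight $|D\Gamma_{w}|^{p}$ controlled on the contact set by the lower bound on the slopes of the convex envelope, the Calder\'on--Zygmund cube-decomposition weak Harnack estimate, the dual local maximum principle, and a covering/Harnack-chain argument. Two minor points worth recording: \cite{DFQ10} itself treats the singular range $-1<p<0$ while the degenerate range $p\geq 0$ rests on Imbert's ABP/Harnack results (the scheme is the one you describe in both cases), and the normalization $u\mapsto u/k$ must be understood at the level of the ellipticity class (the rescaled operator $k^{-1}F(kM,x)$, or directly the Pucci inequalities, since $F$ need not be homogeneous), which your reduction to one-sided Pucci inequalities implicitly accomplishes.
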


To show Theorem \ref{Thm6}, we need to present several important facts. We first consider a prior estimate to solutions of the Dirichlet problem
\begin{equation}\tag{{\bf D-BVP}}
\label{Se2:D-BVP}
\left\{
     \begin{aligned}
     & |Du|^{p} F(D^{2}u,x) = f(x,u)   \quad \mathrm{in} \ \ B_{1}          \\
     &  u = g   \quad \quad \quad \quad \quad \quad \quad \quad \quad \ \ \mathrm{on}   \ \ \partial B_{1},        \\
     \end{aligned}
     \right.
\end{equation}
where $ 0 \leq p < \infty  $, $ g \in C^{0}(\partial B_{1}) $ and $ f $ satisfies the following condition: for every interval $ K \Subset \mathbb{R} $,
\begin{equation}\label{Se2:eqf1}
    \sup_{B_{1}\times K} |f(x,t)| < \infty.
\end{equation}

Now we establish a priori estimate to solutions of Dirichlet problem \eqref{Se2:D-BVP} when the inhomogeneous term $ f(x,t) $ is subject to appropriate growth conditions as $ |t| \rightarrow \infty $, drawing on ideas from \cite[Lemma 4.1]{BM11} and \cite[Theorem 5.3]{BM12}.  

\begin{Theorem}[{\bf $ L^{\infty}$-bounds}]
\label{Se2:Thm2.2}
Assume that $ F $ fulfills \eqref{1a} and \eqref{2a}. Let $ g \in C^{0}(\partial B_{1}) $ and $ f \in C^{0}(B_{1} \times \mathbb{R}, \mathbb{R}) $ such that \eqref{Se2:eqf1} holds. Suppose that

\vspace{2mm}

$ (\mathrm{i}) $ $ \liminf_{t \rightarrow \infty} \frac{\inf_{B_{1}}f(x,t)}{t^{1+p}}: = \alpha $; \quad $ (\mathrm{ii}) $  $ \liminf_{t \rightarrow -\infty} \frac{\sup_{B_{1}}f(x,t)}{t^{1+p}}: = \beta $

\vspace{2mm}

\noindent for some $ \alpha, \beta \in [0, \infty] $. Then there exists a constant $ C > 0 $, depending on $ f $ and $ g $, such that
\begin{equation*}
  ||u||_{L^{\infty}(B_{1})}  \leq C
\end{equation*}
for any solution $ u \in C^{0}(B_{1}) $ of \eqref{Se2:D-BVP}.
\end{Theorem}

To keep the paper easy to read, we postpone its proof to \hyperref[Appendix A]{Appendix A}.

\vspace{1mm}

The next result is pivotal in establishing $ L^{\infty}$-bounds for viscosity solutions of \eqref{Se2:D-BVP}. We refer the readers to \cite[Lemma 2.3]{1DV21} for the details.

\begin{Lemma}[{\bf Comparison principle}]
\label{Sec2:lemma4}
Suppose that $ F $ satisfies \eqref{1a} and \eqref{2a}. Let $ u_{1} $ and $ u_{2} $ be continuous functions in $ B_{1} $ and let $ f \in C^{0}(\overline{B_{1}})$ fulfilling
\begin{equation*}
  |Du_{1}|^{p} F(D^{2}u_{1},x)  \leq f(x)  \leq |Du_{2}|^{p} F(D^{2}u_{2},x)  \quad    \mathrm{in}  \quad  B_{1}
\end{equation*}
in the viscosity sense, $ 0< p < \infty $, and $ \inf_{B_{1}} f > 0 $ or $ \sup_{B_{1}} f <0 $. If $ u_{1} \geq u_{2} $ on $ \partial B_{1} $, then $ u_{1} \geq u_{2} $ in $ B_{1} $.
\end{Lemma}

The following result is the ``Cutting Lemma" from \cite[Lemma 6]{LS13}(cf. \cite[Lemma 1.7]{SV21}), and it is concerned with the homogeneous degenerate problem:

\begin{Lemma}[{\bf Cutting Lemma}]
\label{lemma25}
Let $ F $ be an operator satisfying \eqref{1a} and $ u $ be a viscosity solution of
\begin{equation*}
  |Du|^{p} F(D^{2}u,x) = 0 \ \ \text{in} \ \ B_{1}
\end{equation*}
with $ p \geq 0 $. Then $ u $ is a viscosity solution of
\begin{equation*}
  F(D^{2}u,x) = 0  \ \ \text{in} \ \ B_{1}.
\end{equation*}
\end{Lemma}

We now turn to establishing a weak comparison principle for degenerate or singular fully nonlinear elliptic systems in a general setting
\begin{equation}\label{Section3:eq9}
\left\{
     \begin{aligned}
     & |Du|^{p} F(D^{2}u,x) - c_{1}(x) Q_{1}(v(x)) =0 \ \ \text{in} \ \ B_{1}   \\
     &  |Dv|^{q} G(D^{2}v,x) - c_{2}(x) Q_{2}(u(x)) =0      \ \ \text{in} \ \ B_{1},        \\
     \end{aligned}
     \right.
\end{equation}
where $ c_{i} $ and $ Q_{i} $, $ i=1,2 $, satisfy the following assumptions:

\vspace{1mm}

\label{Section2:ass2} {\bf (A7) (The continuity and positivity of $ c_{i} $).} $ 0< c_{i}  \in C^{0}(\overline{B}_{1}), i=1,2 $.

\vspace{1mm}

\label{Section2:ass3} {\bf (A8) (The continuity and monotonicity of $ Q_{i} $).} $ Q_{i}: \mathbb{R} \rightarrow  \mathbb{R}, i=1,2 $ are continuous, increasing function, and $ Q_{i}(0) = 0 $.

\begin{Lemma}[{\bf Weak comparison principle}]\label{Sec3:le1}
Suppose that \hyperref[Section1:ass1]{\bf (A1)}, \hyperref[Section1:ass2b]{\bf (A2b)}, \hyperref[Section1:ass3]{\bf (A3)}, \hyperref[Section2:ass2]{\bf (A7)} and \hyperref[Section2:ass3]{\bf (A8)} hold. If $ (u_{1}, v_{1}) $ is a viscosity super-solution and $ (u_{2}, v_{2}) $ is a viscosity sub-solution to \eqref{Section3:eq9} such that $ u_{1} \geq u_{2} $ and $ v_{1} \geq v_{2} $ on $ \partial B_{1}$. Then $ u_{1} \geq u_{2} $ or $ v_{1} \geq v_{2} $ inside $ B_{1} $.
\end{Lemma}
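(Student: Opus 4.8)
The statement is a weak comparison principle: from an ordering of $(u_1,v_1)$ and $(u_2,v_2)$ on $\partial B_1$, conclude that \emph{at least one} of the two componentwise inequalities holds throughout $B_1$. The natural strategy is proof by contradiction: suppose both $u_1 \geq u_2$ \emph{fails} and $v_1 \geq v_2$ \emph{fails} somewhere inside $B_1$. Then the open sets $\Omega_u := \{u_2 > u_1\}$ and $\Omega_v := \{v_2 > v_1\}$ are both nonempty, and by the boundary hypothesis both are compactly contained in $B_1$. The idea is to localize to one of these sets (say $\Omega_u$) and derive a contradiction from the maximum principle for the scalar degenerate/singular operator, exploiting the sign of the coupling term. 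On $\Omega_u$ the function $u_2 - u_1$ is a positive subsolution-minus-supersolution of the first equation, so $|Du_2|^p F(D^2 u_2,x) - c_1 Q_1(v_2) \geq 0 \geq |Du_1|^p F(D^2 u_1,x) - c_1 Q_1(v_1)$ in the viscosity sense; I would like to say this forces $Q_1(v_2) \geq Q_1(v_1)$, i.e. $v_2 \geq v_1$, on $\Omega_u$ — but that is not immediate because the scalar operator is degenerate where the gradient vanishes and one cannot simply "subtract" two viscosity inequalities.

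\textbf{Reduction to the scalar case.} The cleanest route is to first treat the case where $\Omega_u \cap \Omega_v = \emptyset$ or, better, to consider the set $\Omega := \Omega_u \cup \Omega_v \Subset B_1$ and argue on each component. On $\Omega_u$, since $u_2 > u_1$ there and $u_1 \geq u_2$ on $\partial\Omega_u$, if we also had $v_1 \geq v_2$ on $\Omega_u$ then $Q_1(v_1) \geq Q_1(v_2)$, so $u_2$ is a viscosity subsolution of $|Du_2|^p F(D^2 u_2,x) = c_1 Q_1(v_2) \leq c_1 Q_1(v_1)$ and $u_1$ is a viscosity supersolution of $|Du_1|^p F(D^2 u_1,x) = c_1 Q_1(v_1)$, so that $u_1, u_2$ are respectively a super- and a subsolution of the \emph{same} scalar equation $|Dw|^p F(D^2 w,x) = g(x)$ with $g := c_1 Q_1(v_1) \geq 0$ bounded and continuous. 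The comparison principle for such degenerate/singular fully nonlinear equations (the scalar analogue, which is the content of the maximum principle behind Lemma~\ref{lemma24} and the references in Section~\ref{Section 2}, e.g. via \cite{BD15,BPRT20}) then forces $u_1 \geq u_2$ on $\overline{\Omega_u}$, contradicting $u_2 > u_1$ inside. Hence $v_1 \geq v_2$ must \emph{fail} somewhere on $\Omega_u$, i.e. $\Omega_v \cap \Omega_u \neq \emptyset$; symmetrically $\Omega_u \cap \Omega_v \neq \emptyset$ from the $v$-equation. So the contradiction must be extracted from the \emph{overlap} $\Omega_u \cap \Omega_v$, and this is where the structure of the system enters.

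\textbf{The overlap argument.} On the open set $W := \Omega_u \cap \Omega_v$ we have simultaneously $u_2 > u_1$ and $v_2 > v_1$. By monotonicity of $Q_1,Q_2$ and positivity of $c_1,c_2$, the relevant comparisons become one-sided in a compatible way: $u_2$ subsolves $|Du_2|^p F(D^2 u_2,x) = c_1 Q_1(v_2) \geq c_1 Q_1(v_1)$ which is $\geq$ the right-hand side for $u_1$, and likewise for the $v$-equation. The plan is to consider the point(s) where $\max\{(u_2-u_1)_+, (v_2-v_1)_+\}$ — or a suitable weighted max — attains its maximum over $\overline{W}$ (interior, by the boundary data), and apply the theorem on sums / the doubling-of-variables technique for viscosity solutions of degenerate equations to the pair, handling the gradient-degeneracy exactly as in the proof of comparison for $|Du|^p F(D^2u,x) = f$ with $f$ continuous; the coupling terms $c_i Q_i$ have the right monotonicity so they contribute a favourable sign and do not obstruct the argument. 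I would follow \cite[Lemma 2.1]{AT24} closely — our statement is advertised as a generalization of it — replacing the model operators there by $F,G$ using Assumption~\ref{Section1:ass1}, and replacing the pure powers by the general $Q_i$ using Assumption~\ref{Section2:ass3}; the degeneracy/singularity in $|Du|^p,|Dv|^q$ is absorbed exactly as in the cited scalar results \cite{BD15,BPRT20,DFQ10}. The main obstacle is precisely this last point: making the doubling-of-variables comparison work at points where one of the gradients vanishes, since there the operator $|Dw|^p F(D^2w,\cdot)$ is not proper in the usual sense when $p>0$ (it degenerates) or when $-1<p<0$ (it is singular). This is resolved by the by-now-standard device of splitting into the case $Du_i(x_0)\neq 0$ (where the equation is uniformly elliptic and classical comparison applies) and the case of vanishing gradient (where one uses the second alternative in the definition of viscosity solution, or a perturbation argument as in \cite{BD15}), and I expect the bookkeeping there to be the bulk of the work.
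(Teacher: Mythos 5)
Your proposal and the paper diverge already at the level of what the lemma is asserting, and this then affects the validity of your intermediate step.

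\textbf{Interpretation and contradiction hypothesis.} The paper reads (and subsequently uses) the conclusion \emph{pointwise}: for every $x\in B_1$ one has $u_1(x)\ge u_2(x)$ \emph{or} $v_1(x)\ge v_2(x)$. Its negation is the existence of a \emph{single} point $x_0$ with $u_2(x_0)>u_1(x_0)$ \emph{and} $v_2(x_0)>v_1(x_0)$, which is exactly what the paper assumes. With this, the paper goes straight to a doubling-of-variables argument on $\omega_\epsilon(x,y)=u_2(x)-u_1(y)-\frac{1}{4\epsilon}|x-y|^4$, applies the Ishii--Jensen theorem on sums to obtain $X\le Y$, uses uniform ellipticity and the uniform continuity of $F$'s coefficients, and lets $\epsilon\to 0$ to deduce $c_1(z_0)\le 0$, contradicting Assumption~\ref{Section2:ass2}; a separate short case handles the locally constant alternative in the definition of viscosity solution. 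You instead read the conclusion globally ($\{u_2>u_1\}=\emptyset$ or $\{v_2>v_1\}=\emptyset$), which is strictly stronger; the paper neither claims nor needs that form, and the pointwise form gives the negation involving a common point for free, so your steps establishing $\Omega_u\cap\Omega_v\ne\emptyset$ are unnecessary for the paper's assertion.

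\textbf{The sign error in your reduction.} Beyond being unnecessary, your step establishing the overlap does not work as stated. You argue: if $v_1\ge v_2$ on $\Omega_u$ then $Q_1(v_1)\ge Q_1(v_2)$, hence $u_2$ subsolves and $u_1$ supersolves the same scalar equation with right-hand side $g=c_1Q_1(v_1)$. This is not the case: from the definition, $u_2$ is a subsolution with right-hand side $c_1Q_1(v_2)$, i.e.\ $|Du_2|^pF(D^2u_2,x)\ge c_1Q_1(v_2)$, and if $c_1Q_1(v_2)\le c_1Q_1(v_1)=g$ this does \emph{not} give $|Du_2|^pF(D^2u_2,x)\ge g$. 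For the scalar comparison argument (with $u_2$ a subsolution with RHS $g_1$, $u_1$ a supersolution with RHS $g_2$, $u_2\le u_1$ on the boundary, concluding $u_2\le u_1$ inside) one needs $g_1\ge g_2$, i.e.\ $c_1Q_1(v_2)\ge c_1Q_1(v_1)$, i.e.\ $v_2\ge v_1$ on $\Omega_u$ --- the opposite of your hypothesis in that step. In other words, the monotonicity of $Q_1$ runs the wrong way for your reduction: assuming $v_1\ge v_2$ on $\Omega_u$ does \emph{not} lead to a contradiction via scalar comparison, so you cannot conclude $\Omega_u\cap\Omega_v\ne\emptyset$ in this manner.

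\textbf{The remaining overlap argument.} Your step~3 (doubling of variables at a common point, absorbing the favourable sign from the monotone coupling, handling the degeneracy/singularity as in the scalar theory) is in spirit the same as the paper's core argument, so if you discard the incorrect reduction and instead directly take the negation of the pointwise statement (a single $x_0\in B_1$ with $u_2(x_0)>u_1(x_0)$ and $v_2(x_0)>v_1(x_0)$), you land squarely on the paper's proof.
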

\begin{proof}
The idea of the proof is inspired by \cite[Theorem 2.3]{BV23}. We suppose by contradiction that there exists $ x_{0} \in B_{1} $ such that
\begin{equation}\label{Section3:eq10}
  u_{2}(x_{0}) > u_{1}(x_{0}) \ \ \text{and} \ \ v_{2}(x_{0}) > v_{1}(x_{0}).
\end{equation}
Setting
\begin{equation*}
  M : = \sup_{\overline{B}_{1}} (u_{2} - u_{1}) > 0,
\end{equation*}
and for any small $ \epsilon > 0 $, we define
\begin{equation*}
  \omega_{\epsilon}(x,y) = u_{2}(x)- u_{1}(y) - \frac{1}{2\epsilon}|x-y|^{2} \ \ \text{for} \ \ x,y \in B_{1}.
\end{equation*}
We observe that the maximum of $ \omega_{\epsilon} $ is bigger than $ M $. Let $ (x_{\epsilon}, y_{\epsilon}) \in \overline{B}_{1} \times \overline{B}_{1} $ be such that $ \max_{\overline{B}_{1} \times \overline{B}_{1}}\omega_{\epsilon} = \omega_{\epsilon}(x_{\epsilon}, y_{\epsilon}):= M_{\epsilon} \geq M $. It follows from \cite[Lemma 3.1]{CIL92},
\begin{equation*}
  \lim_{\epsilon \rightarrow 0} \frac{|x_{\epsilon}-y_{\epsilon}|^{2}}{2\epsilon} = 0 \ \ \text{and} \ \ \lim_{\epsilon \rightarrow 0}M_{\epsilon} =  M.
\end{equation*}

Noticed that $ x_{\epsilon}, y_{\epsilon} $ must belong to the interior of $ B_{1} $. In fact, it can be seen that $ u_{2}(z_{0}) - u_{1}(z_{0}) = M > 0 = \sup_{\partial B_{1}} (u_{2}- u_{1})  $, which implies $ x_{\epsilon}, y_{\epsilon} \in B' \Subset B_{1}   $. Since $ u_{1}, u_{2} $ are Lipschitz continuous in $ B' $, by Lemma \ref{Section2:lem1}, we can choose a constant $ L > 0 $ such that
\begin{equation*}
  |u_{1}(x)-u_{1}(y)| + |u_{2}(x) - u_{2}(y)| \leq L|x-y|, \ \ x, y \in B'.
\end{equation*}
Observing
\begin{equation*}
  u_{2}(x_{\epsilon})- u_{1}(x_{\epsilon})  \leq u_{2}(x_{\epsilon}) - u_{1}(y_{\epsilon})  - \frac{1}{2\epsilon}|x_{\epsilon}-y_{\epsilon}|^{2},
\end{equation*}
then it immediately follows that
\begin{equation*}
  |x_{\epsilon}-y_{\epsilon}| \leq 2 \epsilon L.
\end{equation*}
Particularly, we have
\begin{equation*}
  \lim_{\epsilon \rightarrow 0} x_{\epsilon} = \lim_{\epsilon \rightarrow 0} y_{\epsilon}   = z_{0}
\end{equation*}
for some $ z_{0} \in B_{1} $.

In the spirit of \cite[Theorem 3.2]{CIL92}, it infers that there exist matrices $ \mathrm{X}, \mathrm{Y} \in  \mathrm{Sym}(n) $ such that
\begin{equation}\label{Section3:eq11}
\bigg( \frac{x_{\epsilon}-y_{\epsilon}}{\epsilon}, \mathrm{X}  \bigg) \in \overline{J}_{B_{1}}^{2,+}u_{2}(x_{\epsilon}) \ \ \text{and} \ \ \bigg( \frac{x_{\epsilon}-y_{\epsilon}}{\epsilon}, \mathrm{Y}  \bigg) \in \overline{J}_{B_{1}}^{2,-}u_{1}(y_{\epsilon}).
\end{equation}
Moreover,
\begin{equation*}
-\frac{1}{\epsilon} \textbf{I}\rm{d}_{n} \leq
\begin{pmatrix}
\mathrm{X}  &   0   \\
0  &    -\mathrm{Y}
\end{pmatrix}
\leq \frac{1}{\epsilon}
\begin{pmatrix}
\textbf{I}\rm{d}_{n}   &   -\textbf{I}\rm{d}_{n}   \\
-\textbf{I}\rm{d}_{n}  &    \textbf{I}\rm{d}_{n}
\end{pmatrix},
\end{equation*}
which means that
\begin{equation}\label{Section3:eq12}
 \mathrm{X} \leq  \mathrm{Y} \quad \mathrm{and} \quad ||\mathrm{Y}|| \leq \frac{1}{\epsilon}.
\end{equation}

If $ u_{1}, v_{1} $ and $ u_{2}, v_{2} $ are not locally constants around $ y_{\epsilon} $ and $ x_{\epsilon} $, respectively, then we combine \eqref{Section3:eq11}, \eqref{Section3:eq12}, \hyperref[Section1:ass1]{\bf (A1)} and \hyperref[Section1:ass2b]{\bf (A2b)} to get
\begin{align*}
c_{1}(x_{\epsilon})Q_{1}(v_{2}(x_{\epsilon})) & \leq  \bigg( \frac{|x_{\epsilon}-y_{\epsilon}|}{\epsilon} \bigg)^{p} F(\mathrm{X},x_{\epsilon}) \leq \bigg( \frac{|x_{\epsilon}-y_{\epsilon}|}{\epsilon} \bigg)^{p}
F(\mathrm{Y},x_{\epsilon})     \\
& = \bigg( \frac{|x_{\epsilon}-y_{\epsilon}|}{\epsilon} \bigg)^{p} \big[ F(\mathrm{Y},x_{\epsilon}) - F(\mathrm{Y},y_{\epsilon})   \big] + \bigg( \frac{|x_{\epsilon}-y_{\epsilon}|}{\epsilon} \bigg)^{p} F(\mathrm{Y},y_{\epsilon})    \\
& \leq  C\bigg( \frac{|x_{\epsilon}-y_{\epsilon}|}{\epsilon} \bigg)^{p} \omega_{F}(|x_{\epsilon}-y_{\epsilon}|) (1+||\text{Y}||)^{\tau} + c_{1}(y_{\epsilon}) Q_{1}(v_{1}(y_{\epsilon}))     \\
& \leq  C\bigg( \frac{|x_{\epsilon}-y_{\epsilon}|}{\epsilon} \bigg)^{p} \epsilon^{\overline{\alpha}} \bigg(1+\frac{1}{\epsilon}\bigg)^{\tau} + c_{1}(y_{\epsilon}) Q_{1}(v_{1}(y_{\epsilon})),
\end{align*}
then it reads
\begin{align}\label{Section3:eq13}
\begin{split}
 C\bigg( \frac{|x_{\epsilon}-y_{\epsilon}|}{\epsilon} \bigg)^{p} \epsilon^{\overline{\alpha}} \bigg(1+\frac{1}{\epsilon}\bigg)^{\tau} &  + c_{1}(x_{\epsilon}) \big[ Q_{1}(v_{1}(y_{\epsilon})) - Q_{1}(v_{2}(x_{\epsilon})) \big]  \\
& + Q_{1}(v_{1}(y_{\epsilon})) \big[c_{1}(y_{\epsilon}) - c_{1}(x_{\epsilon}) \big]  \geq 0 .
\end{split}
\end{align}
Taking the limit $ \epsilon \rightarrow 0 $ in \eqref{Section3:eq13} and invoking \eqref{Section3:eq10}, it deduces $ c_{1}(z_{0}) \leq 0 $, which is a contradiction to \hyperref[Section2:ass3]{\bf (A7)}.

If $ u_{1} \equiv a_{1}, v_{1} \equiv b_{1} $ in $ B_{\delta}(y_{\epsilon}) $ for some $ \delta > 0 $, and
\begin{equation}\label{Section3:eq14}
  c_{1}(y_{\epsilon})Q_{1}(v_{1}(y_{\epsilon})) \geq 0 \ \ \text{and} \ \ c_{2}(y_{\epsilon})Q_{2}(u_{1}(y_{\epsilon})) \geq 0.
\end{equation}
Similarly, if $ u_{2} \equiv a_{2}, v_{2} \equiv b_{2}  $ in $ B_{\delta}(x_{\epsilon}) $ for some $ \delta > 0 $, and
\begin{equation}\label{Section3:eq15}
  c_{1}(x_{\epsilon})Q_{1}(v_{2}(x_{\epsilon})) \leq 0 \ \ \text{and} \ \ c_{2}(x_{\epsilon})Q_{2}(u_{2}(x_{\epsilon})) \leq 0.
\end{equation}
Taking the limit $ \epsilon \rightarrow 0 $ in   \eqref{Section3:eq14}--\eqref{Section3:eq15}, then we have
\begin{equation*}
  c_{1}(z_{0}) (Q_{1}(b_{2})-Q_{1}(b_{1})) \leq 0 \ \ \text{and} \ \ c_{2}(z_{0}) (Q_{2}(a_{2})-Q_{2}(a_{1})) \leq 0,
\end{equation*}
which means $ c_{i}(z_{0}) \leq 0, i=1,2 $, where we have used the fact $ b_{2} > b_{1} $ and $ a_{2} > a_{1} $. However this is a contradiction to \hyperref[Section2:ass3]{\bf (A7)}.

Consequently, the proof is complete.
\end{proof}

Before proceeding further, we make the following important remarks.

\begin{remark}
The case $ c_{1}(x) = c_{2}(x) \equiv 1 $, $ Q_{1}(v) = v_{+}^{\lambda_{1}} $, $ Q_{2}(u) = u_{+}^{\lambda_{2}} $ and $ p=q \equiv 0 $ corresponds \cite[Lemma 2.1]{AT24}.
\end{remark}

\begin{remark}
The method in Lemma \ref{Sec3:le1} does work for general degenerate or singular elliptic systems with Hamiltonian term
 \begin{equation*}
\left\{
     \begin{aligned}
     & |Du|^{p} F(D^{2}u,x) + H_{1}(x, Du)= c_{1}(x) Q_{1}(v(x)) \ \ \text{in} \ \ B_{1}   \\
     &  |Dv|^{q} G(D^{2}v,x) + H_{2}(x, Dv)= c_{2}(x) Q_{2}(u(x))      \ \ \text{in} \ \ B_{1},         \\
     \end{aligned}
     \right.
\end{equation*}
where $ H_{i}(i=1,2) $ satisfy the following conditions:

\begin{enumerate}[a)]

\item $ H_{1}(x, \widehat{p}) \leq C(1+|\widehat{p}|^{\beta_{1}}) $ and $ H_{2}(x, \widehat{p}) \leq C(1+|\widehat{p}|^{\beta_{2}}) $ for some $ \beta_{1} \in (0, 1+p] $ and $ \beta_{2} \in (0, 1+q] $, $ (x, \widehat{p}) \in \overline{B_{1}} \times \mathbb{R}^{n} $;

\item $ |H_{1}(x, \widehat{p})- H_{1}(y, \widehat{p})| \leq \omega_{1}(|x-y|)(1+|\widehat{p}|^{\beta_{1}}) $ and $ |H_{2}(x, \widehat{p})- H_{2}(y, \widehat{p})| \leq \omega_{2}(|x-y|)(1+|\widehat{p}|^{\beta_{2}}) $, where $ \omega_{i}: [0, \infty)  \rightarrow [0, \infty) $ are continuous functions with $ \omega_{i}(0) =0, i=1,2 $.
\end{enumerate}
\end{remark}

In the end, we present an important lemma involving a particular comparison principle for degenerate fully nonlinear equation, which shall be used in the proof of Theorem \ref{Thm6}.

\begin{Lemma}\label{Se2:lemma7}
Suppose that $ F $ satisfies \eqref{1a}, \eqref{2a} and \eqref{3a}. Assume that $ f_{i}: B_{1} \times \mathbb{R} \rightarrow \mathbb{R}      $ for $ i=1,2 $ are continuous. Let $ u, v \in C^{0}(\overline{B_{1}}) $ satisfy
\begin{equation*}
  |Du|^{p} F(D^{2}u,x) \geq f_{1}(x,u)  \quad  \mathrm{and}  \quad   |Dv|^{p} F(D^{2}v,x)   \leq f_{2}(x,v).
\end{equation*}
Furthermore, assume that either $ f_{1}(x,t) $ or $ f_{2}(x,t) $ is non-decreasing in $ t $, and that $ f_{1}(x,t) > f_{2}(x,t) $ for all $ (x,t) \in B_{1} \times \mathbb{R} $. If $ u \leq v $ on $ \partial B_{1} $, then $ u \leq v  $ in $ B_{1} $.
\end{Lemma}

This lemma can be proved similar to Lemma \ref{Sec3:le1} by using a doubling variable method. For the sake of brevity, we omit its proof.

\vspace{3mm}

\section{Regularity of solutions along the free boundary}\label{Section 3}

Here we derive an improved regularity of solutions to \eqref{DCP} along the free boundary $ \partial \{|(u,v)| >0\} $. With this, we prove a finer gradient control near the free boundary $ \partial \{|(u,v)| >0\} $, as well as a better control for dead-core solutions $ (u,v) $ close to its free boundary. It is meaningful to examine the measure estimate for the free boundary.

We first establish a new flatness estimate, which is different from \cite[Lemma 3.1]{AT24}.

\begin{Lemma}[{\bf Flatness estimate}]\label{Se4:lem1}
Suppose that \hyperref[Section1:ass1]{\bf (A1)}, \hyperref[Section1:ass2a]{\bf (A2a)} and \hyperref[Section1:ass3]{\bf (A3)} hold. For given $ \mu > 0 $, there exists $ \kappa_{\mu} >0 $ such that if $ u, v \in C^{0}(B_{1}) $ satisfies
\begin{equation*}
 u(0)=v(0)=0, \ \ 0\leq u,v \leq 1 \ \ \text{in} \ \ B_{1},
\end{equation*}
and
\begin{equation*}
\left\{
     \begin{aligned}
     &  |Du|^{p} F(D^{2}u,x) - \kappa  (v_{+})^{\lambda_{1}} = 0 \ \ \ \text{in} \ \ B_{1}    \\
     &  |Dv|^{q}G(D^{2}v,x)- (u_{+})^{\lambda_{2}} = 0 \ \ \ \ \  \text{in} \ \ B_{1},       \\
     \end{aligned}
     \right.
\end{equation*}
for $ 0 < \kappa \leq \kappa_{\mu} $, then
\begin{equation*}
  \sup_{B_{1/2}} |(u,v)| \leq \mu.
\end{equation*}
\end{Lemma}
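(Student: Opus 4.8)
The natural route is a compactness-and-contradiction argument that uses the coupling of the two equations together with the Harnack inequality to force the limiting profile to vanish. Suppose the claim fails for some fixed $\mu>0$: then there are sequences $\kappa_j\downarrow 0$ and pairs $(u_j,v_j)$ satisfying the hypotheses with $\kappa=\kappa_j$, yet $\sup_{B_{1/2}}|(u_j,v_j)|>\mu$ for all $j$. Since $0\le u_j,v_j\le 1$ and $0<\kappa_j\le 1$, the right-hand sides $\kappa_j(v_j)_+^{\lambda_1}$ and $(u_j)_+^{\lambda_2}$ are bounded in $L^\infty(B_1)$ uniformly in $j$. Applying the single-equation estimate to each component separately, Lemma~\ref{Section2:lem1} (for $p,q\ge 0$) and Remark~\ref{Sec2:rk2} (for $p,q\in(-1,0)$) give a uniform bound for $\|u_j\|_{C^{1,\alpha}(B_{3/4})}$ and $\|v_j\|_{C^{1,\alpha}(B_{3/4})}$. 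By Arzel\`{a}--Ascoli, along a subsequence $u_j\to u_\infty$ and $v_j\to v_\infty$ in $C^1_{\mathrm{loc}}(B_1)$, with $0\le u_\infty,v_\infty\le 1$ and $u_\infty(0)=v_\infty(0)=0$.

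\textbf{Passage to the limit.} By stability of viscosity solutions and $\kappa_j\to 0$, the limit $u_\infty$ is a viscosity solution of the homogeneous equation $|Du_\infty|^p F(D^2u_\infty,x)=0$ in $B_1$, and, using that $u_j\to u_\infty$ locally uniformly, $v_\infty$ solves $|Dv_\infty|^q G(D^2v_\infty,x)=(u_\infty)_+^{\lambda_2}$ in $B_1$. In the singular range $p\in(-1,0)$ one invokes Lemma~\ref{Section2:lem3} to read $u_\infty$ as a viscosity solution of the uniformly elliptic equation $F(D^2u_\infty,x)=0$ (and similarly for $v_\infty$ once its right-hand side is identified as $0$ below).

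\textbf{Propagation of the zero and conclusion.} Since $u_\infty\ge 0$ is a non-negative viscosity solution of a homogeneous degenerate/singular equation, Theorem~\ref{Pre:Thm2.1}, applied (after rescaling) on any ball $B_\rho(y)\subset B_1$, yields $\sup_{B_{\rho/2}(y)}u_\infty\le C\inf_{B_{\rho/2}(y)}u_\infty$; hence $u_\infty(y)=0$ forces $u_\infty\equiv 0$ on $B_{\rho/2}(y)$. Thus $\{u_\infty=0\}$ is open, it is closed by continuity, and it is nonempty because $0$ belongs to it, so by connectedness of $B_1$ we get $u_\infty\equiv 0$ in $B_1$. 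Consequently $(u_\infty)_+^{\lambda_2}\equiv 0$, so $v_\infty$ is itself a non-negative viscosity solution of $|Dv_\infty|^q G(D^2v_\infty,x)=0$, and the identical Harnack-plus-connectedness argument gives $v_\infty\equiv 0$ in $B_1$. Then $u_j\to 0$ and $v_j\to 0$ uniformly on $\overline{B_{1/2}}$, so $\sup_{B_{1/2}}|(u_j,v_j)|\to 0$, contradicting $\sup_{B_{1/2}}|(u_j,v_j)|>\mu$ for large $j$. This proves the lemma, with $\kappa_\mu$ any sufficiently small positive number.

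\textbf{Main obstacle.} The delicate step is turning a single interior zero of the limiting profile into identical vanishing on a whole neighborhood: this is exactly where the scale-invariant Harnack inequality for degenerate/singular operators (Theorem~\ref{Pre:Thm2.1}) is indispensable, and where care is needed to keep every estimate uniform in $j$ when $p$ or $q$ lies in $(-1,0)$ (the reduction of Lemma~\ref{Section2:lem3}, together with the local Lipschitz bound, is what makes this clean). The other point requiring attention is the stability of the limit in the specific viscosity-solution notion used here, including the ``locally constant'' alternative in the definition; once that is in place, the coupling of the system does the rest, transferring the vanishing of $u_\infty$ to $v_\infty$.
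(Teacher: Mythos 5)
Your proof follows the same contradiction-and-compactness skeleton as the paper's (uniform $C^{1,\alpha}$ bounds from Lemma~\ref{Section2:lem1} and Remark~\ref{Sec2:rk2}, Arzel\`a--Ascoli, and stability of viscosity solutions), but the step that forces the limiting pair $(u_\infty,v_\infty)$ to vanish identically goes by a genuinely different route. The paper first uses the cutting lemma of Imbert--Silvestre \cite{LS13} (together with Lemma~\ref{Section2:lem3} in the singular range $p\in(-1,0)$) to pass from $|Du_\infty|^pF_\infty(D^2u_\infty,x)=0$ to the uniformly elliptic equation $F_\infty(D^2u_\infty,x)=0$, and then invokes the Caffarelli--Cabr\'e strong maximum principle (\cite[Proposition~4.9]{CC95}) to conclude $u_\infty\equiv 0$ from $u_\infty\ge 0$ and $u_\infty(0)=0$; the same combination finishes $v_\infty$. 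You instead apply the Harnack inequality of Theorem~\ref{Pre:Thm2.1} directly to the homogeneous degenerate/singular equation and spread the zero by the open-closed-connected argument. Both routes are correct and lean on the coupling in the same way to transfer the vanishing from $u_\infty$ to $v_\infty$. One small remark: since the Harnack estimate of \cite{DFQ10} is already formulated for the full range $p>-1$ in the gradient-degenerate form, the detour through Lemma~\ref{Section2:lem3} that you suggest for $p\in(-1,0)$ is unnecessary for your argument (though harmless) --- the Harnack does the heavy lifting without requiring the reduction to $F(D^2u,x)=0$. The paper's route, in exchange for invoking the cutting lemma, gives along the way the slightly stronger identification that $u_\infty$ is $F$-harmonic in the ordinary viscosity sense.
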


\begin{proof}
We argue by contradiction. Suppose, for $ \mu_{0} > 0 $, there exist the sequence of functions $ \{u_{k}\}_{k}, \{v_{k}\}_{k} $, $ \{F_{k}\}_{k}, \{G_{k}\}_{k}$ such that
$ u_{k}(0) = v_{k}(0) = 0 $, $ 0 \leq u_{k}, v_{k} \leq 1 $ and $ (u_{k},v_{k}) $ satisfies
\begin{equation*}
\left\{
     \begin{aligned}
     &  |Du_{k}|^{p} F_{k}(D^{2}u_{k},x) - \frac{1}{k}  (v_{k})_{+}^{\lambda_{1}} = 0 \ \ \ \text{in} \ \ B_{1}    \\
     &  |Dv_{k}|^{q}G_{k}(D^{2}v_{k},x)- (u_{k})_{+}^{\lambda_{2}} = 0 \ \ \ \ \  \text{in} \ \ B_{1},         \\
     \end{aligned}
     \right.
\end{equation*}
with $ F_{k} $ and $ G_{k}$ still fulfilling \hyperref[Section1:ass1]{\bf (A1)} and \hyperref[Section1:ass2a]{\bf (A2a)}. However,
\begin{equation}\label{sec4:lem1-eq2}
  \sup_{B_{1/2}} |(u_{k},v_{k})| > \mu_{0}.
\end{equation}

Using Lemma \ref{Section2:lem1}, Remark \ref{Sec2:rk2} and the Arzel$ \grave{a}$-Ascoli theorem, up to sub-sequence, $ u_{k} $ and $ v_{k} $ converge locally uniformly to some $ u_{\infty} $ and $ v_{\infty} $ respectively, as $ k\rightarrow \infty $. Clearly, $ u_{\infty}(0) = v_{\infty}(0) =0 $, $ 0 \leq u_{\infty}, v_{\infty} \leq 1 $. By the stability result of viscosity solution, we have that $ (u_{\infty}, v_{\infty}) $ satisfies
\begin{equation}\label{sec4:lem1-eq3}
\left\{
     \begin{aligned}
     &  |Du_{\infty}|^{p} F_{\infty}(D^{2}u_{\infty}, x) = 0  \qquad \qquad \ \text{in} \ \  B_{1/2}     \\
     &  |Dv_{\infty}|^{q}G_{\infty}(D^{2}v_{\infty}, x) = (u_{\infty})_{+}^{\lambda_{2}} \qquad   \text{in} \ \ B_{1/2} ,            \\
     \end{aligned}
     \right.
\end{equation}
where $ F_{k} $ and $ G_{k}$ still satisfy \hyperref[Section1:ass1]{\bf (A1)} and \hyperref[Section1:ass2a]{\bf (A2a)}. In the spirit of Lemma \ref{lemma25}, and Lemma \ref{Section2:lem3}, it follows that
\begin{equation*}
  F_{\infty}(D^{2}u_{\infty}, x) = 0 \ \ \text{in} \ \  B_{1/2}.
\end{equation*}
Then the strong maximum principle \cite[Proposition 4.9]{CC95} implies that $ u_{\infty} \equiv 0 $ in $ B_{1/2}  $. Using \eqref{sec4:lem1-eq3}, Lemma \ref{Section2:lem3} and Lemma \ref{lemma25} again, we obtain
\begin{equation*}
  G_{\infty}(D^{2}v_{\infty}, x) = 0 \ \ \text{in} \ \  B_{1/2},
\end{equation*}
which, together with $ v_{\infty}(0) = 0 $ and strong maximum principle \cite[Proposition 4.9]{CC95}, yield $ v_{\infty} \equiv 0 $ in $ B_{1} $. This contradicts with    \eqref{sec4:lem1-eq2}.
\end{proof}

Now we commence with the proof of Theorem \ref{Thm1}.

\begin{proof}[{\bf Proof of Theorem~\ref{Thm1}}]
We assume, with no loss of generality, that $ x_{0} = 0$. We now take $ \mu = 2^{-{\frac{2}{(1+p)(1+q)-\lambda_{1}\lambda_{2}}}} $ and $ \kappa_{\mu} $ is as in Lemma \ref{Se4:lem1}. Letting
\begin{equation*}
  u_{1}(x) = \widehat{A} u(\widehat{B}x)    \ \  \text{and}   \ \ v_{1}(x) = \widehat{A} v(\widehat{B}x)  \ \  \text{in}  \ \  B_{1},
\end{equation*}
where $ \widehat{A}, \widehat{B} $ are to be determined later, simple calculations yield that $ (u_{1},v_{1}) $ satisfies
\begin{equation}\label{Sec4:sys1}
\left\{
     \begin{aligned}
     &  |Du_{1}|^{p} \widetilde{F}(D^{2}u_{1},x) - \widehat{A}^{1+p-\lambda_{1}}\widehat{B}^{2+p} (v_{1}^{+})^{\lambda_{1}} = 0             \ \ \ \text{in} \ \ B_{1}    \\
     & |Dv_{1}|^{q} \widetilde{G}(D^{2}v_{1},x) - \widehat{A}^{1+q-\lambda_{2} }\widehat{B}^{2+q}(u_{1}^{+})^{\lambda_{2}} =0  \ \ \  \  \text{in} \ \ B_{1}         \\
     \end{aligned}
     \right.
\end{equation}
in the viscosity sense, where
\begin{equation}
\widetilde{F}(D^{2}u_{1},x):= \widehat{A} \widehat{B}^{2} F(\widehat{A}^{-1}\widehat{B}^{-2} D^{2}u_{1}, \widehat{B}x) \ \ \text{and} \ \ \widetilde{G}(D^{2}v_{1},x):= \widehat{A} \widehat{B}^{2} G(\widehat{A}^{-1}\widehat{B}^{-2} D^{2}v_{1}, \widehat{B}x).
\end{equation}
Here we choose $ \widehat{A}, \widehat{B} $ as follows,
\begin{equation*}
     \widehat{A}= \min \bigg\{\kappa_{\mu}^{-\big[(1+p-\lambda_{1})+\frac{(2+p)(\lambda_{2}-1-q)}{2+q}\big]},  1, ||u||_{L^{\infty}(B_{1})}^{-1}, ||v||_{L^{\infty}(B_{1})}^{-1}  \bigg\}
\end{equation*}
and
\begin{equation*}
  \widehat{B} =\widehat{A}^{\frac{\lambda_{2}-1-q}{2+q}}.
\end{equation*}

Then it easily seen that $ u_{1}(0) = v_{1}(0) =0 $ and $ 0 \leq u_{1}, v_{1} \leq 1 $ in $ B_{1} $.
Applying Lemma \ref{Se4:lem1} to $ (u_{1},v_{1}) $, it reads
\begin{equation}\label{Se4:eq1}
 \sup_{B_{1/2}} |(u_{1},v_{1})| \leq 2^{-{\frac{2}{(1+p)(1+q)-\lambda_{1}\lambda_{2}}}}.
\end{equation}
Next we set two new scaled functions
\begin{equation*}
  u_{2}(x) = 2^{{\frac{2}{(1+p)(1+q)-\lambda_{1}\lambda_{2}}}}u_{1}\bigg( \frac{x}{2}  \bigg) \ \ \text{and} \ \  v_{2}(x) = 2^{{\frac{2}{(1+p)(1+q)-\lambda_{1}\lambda_{2}}}}v_{1}\bigg( \frac{x}{2}  \bigg).
\end{equation*}
From (\ref{Se4:eq1}), we have $ 0 \leq u_{2}, v_{2} \leq 1 $ and $ u_{2}(0) = v_{2}(0) = 0 $. Moreover, it is clear that $ (u_{2},v_{2}) $ is a viscosity solution to the variant of system (\ref{Sec4:sys1}). In effect, a straightforward computation yields that $ (u_{2}, v_{2}) $ fulfills
 \begin{equation*}
\left\{
     \begin{aligned}
     &  |Du_{2}|^{p} \widetilde{F}_{0}(D^{2}u_{2},x) - \widehat{A}^{1+p-\lambda_{1}}\widehat{B}^{2+p}\frac{2^{2+p}}{K^{1+\lambda_{1}+p}} (v_{2}^{+})^{\lambda_{1}} = 0             \ \ \ \text{in} \ \ B_{1}    \\
     & |Dv_{2}|^{q} \widetilde{G}_{0}(D^{2}v_{2},x) - \widehat{A}^{1+q-\lambda_{2} }\widehat{B}^{2+q}\frac{2^{2+q}}{K^{1+\lambda_{2}+q}}(u_{2}^{+})^{\lambda_{2}} =0  \ \ \   \text{in} \ \ B_{1}         \\
     \end{aligned}
     \right.
\end{equation*}
in the viscosity sense, where
\begin{equation*}
\left\{
     \begin{aligned}
     & \widetilde{F}_{0}(D^{2}u_{2},x):= \frac{2^{2}}{K} \widetilde{F}(\frac{K}{2^{2}}D^{2}u_{2}, \frac{x}{2});       \\
     & \widetilde{G}_{0}(D^{2}v_{2},x):= \frac{2^{2}}{K} \widetilde{G}(\frac{K}{2^{2}}D^{2}v_{2}, \frac{x}{2});           \\
     &  K:= 2^{{\frac{2}{(1+p)(1+q)-\lambda_{1}\lambda_{2}}}}.
     \end{aligned}
     \right.
\end{equation*}
Then using Lemma \ref{Se4:lem1} again, it infers
\begin{equation*}
  \sup_{B_{1/2}} |(u_{2},v_{2})| \leq 2^{-{\frac{2}{(1+p)(1+q)-\lambda_{1}\lambda_{2}}}},
\end{equation*}
scaling back to $ (u_{1},v_{1}) $, it deduces
\begin{equation*}
  \sup_{B_{1/2^{2}}} |(u_{1},v_{1})| \leq 2^{-2 \cdot{\frac{2}{(1+p)(1+q)-\lambda_{1}\lambda_{2}}}}.
\end{equation*}
Iterating inductively the above reasoning gives the following geometric decay:
\begin{equation}\label{Se4:eq2}
  \sup_{B_{1/2^{i}}} |(u_{1},v_{1})| \leq 2^{-i \cdot{\frac{2}{(1+p)(1+q)-\lambda_{1}\lambda_{2}}}},
\end{equation}
Then, for given $ 0< r <\frac{\widehat{B}}{2} $, there exists $ i \in \mathbb{N} $ such that $ 2^{-(i+1)} <  \frac{r}{\widehat{B}} \leq 2^{-i}  $ and by \eqref{Se4:eq2}, we have
\begin{align*}
\sup_{B_{r}} |(u,v)| \leq \sup_{B_{r/\widehat{B}}} |(u_{1},v_{1})| & \leq  \sup_{B_{2^{-i}}} |(u_{1},v_{1})| \leq 2^{-i \cdot{\frac{2}{(1+p)(1+q)-\lambda_{1}\lambda_{2}}}}   \\
& \leq \big( 2^{-(i+1)}  \big)^{\frac{2}{(1+p)(1+q)-\lambda_{1}\lambda_{2}}} 2^{\frac{2}{(1+p)(1+q)-\lambda_{1}\lambda_{2}}}  \\
& \leq \bigg( \frac{2}{\widehat{B}}  \bigg)^{\frac{2}{(1+p)(1+q)-\lambda_{1}\lambda_{2}}}   r^{\frac{2}{(1+p)(1+q)-\lambda_{1}\lambda_{2}}}
\end{align*}
and the Theorem \ref{Thm1} is shown.
\end{proof}

With Theorem \ref{Thm1}, we can obtain two useful corollaries. The first one is the local growth estimates for first derivative estimate of dead-core solution $ (u,v)$ near the free boundary.
\begin{Corollary}\label{Section4:coro1}
Under the hypotheses of Theorem \ref{Thm1}, and let $ (u,v) \geq 0 $ be a viscosity solution to \eqref{DCP}. Then, for any $ z \in \{ |(u,v)|>0 \} \cap B_{1/2}  $, there holds
  \begin{equation}\label{Se4:eq3}
    |\nabla u(z)| \leq C \bigg(\mathrm{dist}(z, \partial \{ |(u,v)|>0 \} ) \bigg)^{\frac{(1+q)+\lambda_{1}(2+q+\lambda_{2})}{(1+p)(1+q)-\lambda_{1}\lambda_{2}}}
  \end{equation}
  and
  \begin{equation}\label{Se4:eq4}
    |\nabla v(z)| \leq C \bigg(\mathrm{dist}(z, \partial \{ |(u,v)|>0 \} ) \bigg)^{\frac{(1+p)+\lambda_{2}(2+p+\lambda_{1})}{(1+p)(1+q)-\lambda_{1}\lambda_{2}}}.
  \end{equation}
\end{Corollary}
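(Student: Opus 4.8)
The plan is to combine the oscillation bound from Theorem~\ref{Thm1} with the interior gradient estimate of Lemma~\ref{Section2:lem1} (and Remark~\ref{Sec2:rk2} for the singular range $-1<p,q<0$), applied after rescaling on the ball that touches the free boundary. Fix $z \in \{|(u,v)|>0\}\cap B_{1/2}$ and set $d := \mathrm{dist}(z, \partial\{|(u,v)|>0\})$. Let $x_{0} \in \partial\{|(u,v)|>0\}$ be a nearest free boundary point, so $|z - x_{0}| = d$ and $B_{d}(z) \subset \{|(u,v)|>0\}$. We may assume $d$ is small (say $d < 1/16$), otherwise the estimate is trivial since $\nabla u, \nabla v$ are bounded on $B_{1/2}$ by the $C^{1,\alpha}_{\mathrm{loc}}$ regularity. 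Applying Theorem~\ref{Thm1} at $x_{0}$ gives, for every $x \in B_{d}(z)$,
\begin{equation*}
  u(x) \le C\,|x - x_{0}|^{\alpha} \le C\,(2d)^{\alpha}, \qquad v(x) \le C\,|x - x_{0}|^{\beta} \le C\,(2d)^{\beta},
\end{equation*}
where $\alpha = \frac{(1+q)(2+p)+\lambda_{1}(2+q)}{(1+p)(1+q)-\lambda_{1}\lambda_{2}}$ and $\beta = \frac{(1+p)(2+q)+\lambda_{2}(2+p)}{(1+p)(1+q)-\lambda_{1}\lambda_{2}}$.

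Next I would rescale. Define $\tilde u(y) := u(z + dy)$ and $\tilde v(y) := v(z + dy)$ for $y \in B_{1}$. A direct computation shows that $\tilde u$ solves $|D\tilde u|^{p}\widetilde F(D^{2}\tilde u, y) = d^{2+p}(\tilde v_{+})^{\lambda_{1}}$ in $B_{1}$, where $\widetilde F(M,y) := d^{2}F(d^{-2}M, z+dy)$ still satisfies Assumptions~\ref{Section1:ass1}--\ref{Section1:ass2} with the same ellipticity constants, and similarly $|D\tilde v|^{q}\widetilde G(D^{2}\tilde v, y) = d^{2+q}(\tilde u_{+})^{\lambda_{2}}$. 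From the oscillation bounds above, $\|\tilde u\|_{L^{\infty}(B_{1})} \le C d^{\alpha}$ and $\|\tilde v\|_{L^{\infty}(B_{1})} \le C d^{\beta}$, so the right-hand side of the $\tilde u$-equation is bounded in $L^{\infty}(B_{1})$ by $d^{2+p}\cdot C d^{\lambda_{1}\beta} = C d^{2+p+\lambda_{1}\beta}$. Lemma~\ref{Section2:lem1} (for $p \ge 0$; Remark~\ref{Sec2:rk2} for $-1<p<0$) then yields
\begin{equation*}
  |D\tilde u(0)| \le \|\tilde u\|_{C^{1,\alpha}(B_{1/2})} \le C\Big( \|\tilde u\|_{L^{\infty}(B_{1})} + \|d^{2+p}(\tilde v_{+})^{\lambda_{1}}\|_{L^{\infty}(B_{1})}^{\frac{1}{1+p}} \Big) \le C\Big( d^{\alpha} + d^{\frac{2+p+\lambda_{1}\beta}{1+p}} \Big).
\end{equation*}

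Finally I would unwind the scaling: $\nabla u(z) = d^{-1}D\tilde u(0)$, so $|\nabla u(z)| \le C\big( d^{\alpha - 1} + d^{\frac{2+p+\lambda_{1}\beta}{1+p} - 1} \big)$, and one checks by elementary algebra using the definitions of $\alpha,\beta$ that $\alpha - 1 = \frac{(1+q)+\lambda_{1}(2+q+\lambda_{2})}{(1+p)(1+q)-\lambda_{1}\lambda_{2}}$ and that $\frac{2+p+\lambda_{1}\beta}{1+p} - 1 = \frac{1 + \lambda_{1}\beta}{1+p}$ equals the same exponent (both reduce to the quantity in \eqref{Se4:eq3} after clearing the common denominator $(1+p)(1+q)-\lambda_{1}\lambda_{2}$), so the dominant power as $d \to 0$ is exactly that exponent and \eqref{Se4:eq3} follows; \eqref{Se4:eq4} is obtained symmetrically by swapping the roles of $(u,p,\lambda_{1},\alpha)$ and $(v,q,\lambda_{2},\beta)$. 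The main obstacle I anticipate is purely bookkeeping: verifying that the two candidate exponents coming from the two terms in Lemma~\ref{Section2:lem1} coincide (so that neither dominates spuriously) and that this common value is the stated one; a secondary subtlety is the singular case $-1<p<0$, where one should invoke Lemma~\ref{Section2:lem3} to recast the equation in non-degenerate form before applying the gradient estimate, and confirm that the $L^{\infty}$ norm of the transformed right-hand side still scales correctly in $d$.
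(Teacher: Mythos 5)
Your argument is correct and is essentially the paper's: rescale to a unit ball, apply the interior gradient estimate of Lemma~\ref{Section2:lem1} (Remark~\ref{Sec2:rk2} in the singular range), and unwind. The paper uses the anisotropic normalization $\tilde u(y)=u(z+dy)/d^{\alpha}$, $\tilde v(y)=v(z+dy)/d^{\beta}$ so that the rescaled pair solves the identical system with a uniform $L^{\infty}$ bound, whereas your pure dilation produces two candidate powers of $d$; their coincidence, which you correctly check, is exactly the identity $\alpha(1+p)-\lambda_{1}\beta=2+p$ that makes the paper's rescaling close on itself, so the two proofs are the same modulo this normalization.
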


\begin{proof}
We fix $ z \in \{ |(u,v)| >0 \} \cap B_{1/2} $, and denote $ r = \mathrm{dist}(z,\partial \{ |(u,v)| >0 \} ) $. Now we choose a point $ x_{0} \in \partial \{ |(u,v)| >0 \} $ such that $ r = |z-x_{0}| $.

Invoking Theorem \ref{Thm1}, we have
\begin{equation}\label{Se4:eq5}
  \sup_{B_{r}(z)} |(u,v)| \leq \sup_{B_{2r}(x_{0})} |(u,v)| \leq Cr^{\frac{2}{(1+p)(1+q)-\lambda_{1}\lambda_{2}}}.
\end{equation}

Next we define the scaled function $ \widetilde{u},\widetilde{v} $ as follows:
\begin{equation*}
  \widetilde{u}(x) :=    \frac{u(z+rx)}{r^{{\frac{(1+q)(2+p)+\lambda_{1}(2+q)}{(1+p)(1+q)-\lambda_{1}\lambda_{2}}}}} \ \ \text{and} \ \  \widetilde{v}(x) := \frac{v(z+rx)}{r^{^{\frac{(1+p)(2+q)+\lambda_{2}(2+p)}{(1+p)(1+q)-\lambda_{1}\lambda_{2}}}}}.
\end{equation*}
Then a routine calculation reveals that $ (\widetilde{u}, \widetilde{v}) $ satisfies
\begin{equation}\label{Se4:eq6}
\left\{
     \begin{aligned}
     &  |D\widetilde{u}|^{p} \widetilde{F}(D^{2}\widetilde{u},x) = (\widetilde{v}_{+})^{\lambda_{1}}             \ \ \ \text{in} \ \ B_{1}    \\
     &  |D\widetilde{v}|^{q} \widetilde{G}(D^{2}\widetilde{v},x) =(\widetilde{u}_{+})^{\lambda_{2}}   \ \ \  \text{in} \ \ B_{1},       \\
     \end{aligned}
     \right.
\end{equation}
where
\begin{equation*}
  \widetilde{F}(D^{2}\widetilde{u},x) = r^{2-\alpha} F(r^{\alpha-2}D^{2}\widetilde{u}, z+rx), \ \ \widetilde{G}(D^{2}\widetilde{v},x) = r^{2-\beta} G(r^{\beta-2}D^{2}\widetilde{v}, z+rx)
\end{equation*}
and
\begin{equation*}
  \alpha = \frac{(1+q)(2+p)+\lambda_{1}(2+q)}{(1+p)(1+q)-\lambda_{1}\lambda_{2}}, \ \ \beta =  \frac{(1+p)(2+q)+\lambda_{2}(2+p)}{(1+p)(1+q)-\lambda_{1}\lambda_{2}}.
\end{equation*}
Moreover, from \eqref{Se4:eq5}, we know
\begin{equation}\label{Se4:eq7}
  \sup_{B_{r}(z)} |(\widetilde{u},\widetilde{v})| \leq C.
\end{equation}
Finally, we combine the Lemmas \ref{Section2:lem1}, \eqref{Se4:eq6}, \eqref{Se4:eq7} and Remark \ref{Sec2:rk2} to obtain
\begin{equation*}
  |\nabla \widetilde{u}(0)| = \frac{|\nabla u(z)|}{r^{\frac{(1+q)+\lambda_{1}(2+q+\lambda_{2})}{(1+p)(1+q)-\lambda_{1}\lambda_{2}}}} \leq C \ \  \text{and} \ \ |\nabla \widetilde{v}(0)| = \frac{|\nabla v(z)|}{r^{\frac{(1+p)+\lambda_{2}(2+p+\lambda_{1})}{(1+p)(1+q)-\lambda_{1}\lambda_{2}}}} \leq C.
\end{equation*}
Scaling back to $ u, v $, then \eqref{Se4:eq3} and \eqref{Se4:eq4} are proven.
\end{proof}

The second corollary is the finer control for dead-core solutions $ (u,v) $ close its free boundary.

\begin{Corollary}\label{Section4:coro2}
Under the hypotheses of Theorem \ref{Thm1}, and let $ (u,v) \geq 0 $ be a viscosity solution to \eqref{DCP}. Given $ x_{0} \in \{|(u,v)>0|\} \cap B_{1/2}  $, then we have
\begin{equation*}
  |(u(x_{0}), v(x_{0}))| \leq  C \bigg(\mathrm{dist}(x_{0}, \partial\{|(u,v)|>0\})  \bigg)^{\frac{2}{(1+p)(1+q)-\lambda_{1}\lambda_{2}}}.
\end{equation*}
In particular,
\begin{equation*}
  |u(x_{0})| \leq   C \bigg(\mathrm{dist}(x_{0}, \partial\{|(u,v)|>0\})  \bigg)^{\frac{(1+q)(2+p)+\lambda_{1}(2+q)}{(1+p)(1+q)-\lambda_{1}\lambda_{2}}}
\end{equation*}
and
\begin{equation*}
  |v(x_{0})| \leq   C \bigg(\mathrm{dist}(x_{0}, \partial\{|(u,v)|>0\})  \bigg)^{\frac{(1+p)(2+q)+\lambda_{2}(2+p)}{(1+p)(1+q)-\lambda_{1}\lambda_{2}}}.
\end{equation*}
\end{Corollary}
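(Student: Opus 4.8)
The plan is to derive this corollary as an immediate consequence of Theorem~\ref{Thm1}, by transferring the decay estimate from the nearest free boundary point, exactly in the spirit of Corollary~\ref{Section4:coro1}. First I would fix $x_{0}\in\{|(u,v)|>0\}\cap B_{1/2}$ and set $r:=\mathrm{dist}(x_{0},\partial\{|(u,v)|>0\})$. Before anything else I would dispose of the uninteresting range: if $r$ is bounded below by a fixed absolute constant, then the claimed inequality is immediate from $0\le u,v\le\|(u,v)\|_{L^{\infty}(B_{1})}$. So I may assume $r$ is small and pick $x_{*}\in\partial\{|(u,v)|>0\}\cap B_{1/2}$ with $|x_{0}-x_{*}|=r$, chosen so that $B_{r}(x_{*})\subset B_{1/8}$.

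Next, since $u,v$ are continuous and nonnegative the function $|(u,v)|$ is continuous, and $x_{0}\in\overline{B_{r}(x_{*})}$, so $|(u(x_{0}),v(x_{0}))|\le\sup_{B_{r}(x_{*})}|(u,v)|$. Applying Theorem~\ref{Thm1} at the free boundary point $x_{*}$ gives
\[
\sup_{B_{r}(x_{*})}|(u,v)|\le C\,r^{\frac{2}{(1+p)(1+q)-\lambda_{1}\lambda_{2}}},
\]
which is precisely the first displayed estimate of the corollary.

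For the two componentwise bounds I would use that $u,v\ge 0$, so
\[
|(u,v)|=u^{\frac{2}{(1+q)(2+p)+\lambda_{1}(2+q)}}+v^{\frac{2}{(1+p)(2+q)+\lambda_{2}(2+p)}}\ge u^{\frac{2}{(1+q)(2+p)+\lambda_{1}(2+q)}},
\]
and symmetrically $|(u,v)|\ge v^{\frac{2}{(1+p)(2+q)+\lambda_{2}(2+p)}}$. Plugging $x_{0}$ into these inequalities, combining with the bound on $|(u(x_{0}),v(x_{0}))|$ just obtained, and raising to the reciprocal powers produces the asserted exponents $\frac{(1+q)(2+p)+\lambda_{1}(2+q)}{(1+p)(1+q)-\lambda_{1}\lambda_{2}}$ for $u(x_{0})$ and $\frac{(1+p)(2+q)+\lambda_{2}(2+p)}{(1+p)(1+q)-\lambda_{1}\lambda_{2}}$ for $v(x_{0})$.

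There is no serious obstacle here: the only mildly technical point is the bookkeeping that places $B_{r}(x_{*})$ inside the ball $B_{1/8}$ on which Theorem~\ref{Thm1} is formulated, which is handled by the dichotomy on the size of $r$ (or a one-line rescaling), so no new analytic ingredient is needed. Effectively this corollary is just a reformulation of Theorem~\ref{Thm1} relative to the nearest free boundary point, and I expect the written proof to take only a few lines.
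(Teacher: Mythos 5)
Your proof is correct and follows essentially the same route as the paper: pick the nearest free boundary point, cover $x_{0}$ with a ball centered there whose radius is comparable to the distance, and invoke Theorem~\ref{Thm1}; the paper simply bounds $\sup_{B_{d}(x_{0})}|(u,v)|\le\sup_{B_{2d}(z_{0})}|(u,v)|$ instead of your $\sup_{B_{r}(x_{*})}|(u,v)|$, a cosmetic difference. Your explicit derivation of the componentwise bounds from the definition of $|(u,v)|$ is exactly what the paper leaves implicit in the phrase ``which entails the desired result''.
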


\begin{proof}
For simplicity, we denote $ d:= \text{dist}(x_{0}, \partial\{|(u,v)|>0\}) $. Let $ z_{0} \in \partial\{|(u,v)|>0\} $ be such that $ d= |x_{0}-z_{0}| $. Then applying Theorem \ref{Thm1}, it concludes
\begin{equation*}
  |(u(x_{0}), v(x_{0}))| \leq \sup_{B_{d}(x_{0})} |(u,v)|  \leq \sup_{B_{2d}(z_{0})} |(u,v)|  \leq C d^{\frac{2}{(1+p)(1+q)-\lambda_{1}\lambda_{2}}},
\end{equation*}
which entails the desired result.
\end{proof}

\vspace{3mm}

\section{Radial solution}\label{Section 4}
In this section, we shall find a radial viscosity solution for system \eqref{DCP}. Such a kind of information is vital in deriving the geometric properties (e.g., non-degeneracy) of solutions for many free boundary problems (see \cite{T16, SLR21, SS18, SOS18, SO19, AT24, ALT16, LL22}).

Assume that
\begin{equation*}\label{Section4:eq1}
u(x) = A|x|^{\alpha} \ \ \text{and} \ \ v(x) = B|x|^{\beta},
\end{equation*}
where
\begin{equation*}\label{Section4:eq2}
\alpha := \frac{(1+q)(2+p)+\lambda_{1}(2+q)}{(1+p)(1+q)-\lambda_{1}\lambda_{2}}, \ \  \beta :=  \frac{(1+p)(2+q)+\lambda_{2}(2+p)}{(1+p)(1+q)-\lambda_{1}\lambda_{2}}
\end{equation*}
and $ A, B $ are positive constants to be chosen later.

Simple calculations yield that
\begin{equation}\label{Section4:eq3}
  u_{i} = A \alpha |x|^{\alpha-2} x_{i}, \ \ \  u_{ij} = A \alpha \big( |x|^{\alpha-2} \delta_{ij} + (\alpha-2)|x|^{\alpha-4} x_{i}x_{j}     \big), \ \ i,j=1,2,\cdots,n,
\end{equation}
then by \hyperref[Section1:ass1]{\bf (A1)} and \hyperref[Section1:ass4]{\bf (A4)}, we give
\begin{equation}\label{Section4:eq4}
  F(D^{2}u,x) \leq \Lambda A\alpha (n+\alpha-2)|x|^{\alpha-2}.
\end{equation}
We combine \eqref{Section4:eq3}, \eqref{Section4:eq4} and \eqref{DCP} to obtain
\begin{equation*}
  |Du|^{p} F(D^{2}u,x) \leq \Lambda(n+\alpha-2) (A\alpha)^{1+p} |x|^{(\alpha-1)p+(\alpha-2)}.
\end{equation*}
If $ B $ is such that
\begin{equation}\label{Section4:eq5}
  \Lambda(n+\alpha-2) (A\alpha)^{1+p} = B^{\lambda_{1}},
\end{equation}
then
\begin{equation*}
  |Du|^{p} F(D^{2}u,x) \leq  (B|x|^{\beta})^{\lambda_{1}}= v^{\lambda_{1}}.
\end{equation*}
Analogously,
\begin{equation*}
  |Dv|^{q} G(D^{2}v,x) \leq \Lambda (n+\beta-2) (B\beta)^{1+q} |x|^{(\beta-1)q+(\beta-2)}.
\end{equation*}
If $ A $ is such that
\begin{equation}\label{Section4:eq6}
  \Lambda (n+\beta-2) (B\beta)^{1+q} = A^{\lambda_{2}},
\end{equation}
then
\begin{equation*}
  |Dv|^{q} G(D^{2}v,x) \leq (A|x|^{\alpha})^{\lambda_{2}} = u^{\lambda_{2}}.
\end{equation*}
Combining \eqref{Section4:eq5} and \eqref{Section4:eq6} to obtain the values of $ A $ and $ B $ as follows:
\begin{equation}\label{Section4:eq7}
  A = \big[ \Lambda(n+\beta-2)\beta^{1+q} \big]^{\frac{\lambda_{1}}{\lambda_{1}\lambda_{2}-(1+p)(1+q)}}  \big[ \Lambda(n+\alpha-2)\alpha^{1+p} \big]^{\frac{1+q}{\lambda_{1}\lambda_{2}-(1+p)(1+q)}}
\end{equation}
and
\begin{equation}\label{Section4:eq8}
  B = \big[ \Lambda(n+\alpha-2) \alpha^{1+p} \big]^{\frac{\lambda_{2}}{\lambda_{1}\lambda_{2}-(1+p)(1+q)}} \big[ \Lambda(n+\beta-2) \beta^{1+q} \big]^{\frac{1+p}{\lambda_{1}\lambda_{2}-(1+p)(1+q)}}.
\end{equation}

Thereby, with a careful analysis, one can see that the pair of functions
\begin{equation*}
  u(x) = A|x|^{\alpha}   \ \ \text{and} \ \  v(x)= B|x|^{\beta}
\end{equation*}
are a pair of viscosity super-solutions of system \eqref{DCP}.

Similarly, one has
\begin{equation*}
  u(x) = \widehat{A}|x|^{\alpha}   \ \ \text{and} \ \  v(x)= \widehat{B}|x|^{\beta}
\end{equation*}
are a pair of viscosity sub-solutions of system \eqref{DCP}, where the constants
\begin{equation*}
   \widehat{A} = \big[ \lambda(n+\beta-2)\beta^{1+q} \big]^{\frac{\lambda_{1}}{\lambda_{1}\lambda_{2}-(1+p)(1+q)}}  \big[ \lambda(n+\alpha-2)\alpha^{1+p} \big]^{\frac{1+q}{\lambda_{1}\lambda_{2}-(1+p)(1+q)}}
\end{equation*}
and
\begin{equation*}
   \widehat{B} = \big[ \lambda(n+\alpha-2) \alpha^{1+p} \big]^{\frac{\lambda_{2}}{\lambda_{1}\lambda_{2}-(1+p)(1+q)}} \big[ \lambda(n+\beta-2) \beta^{1+q} \big]^{\frac{1+p}{\lambda_{1}\lambda_{2}-(1+p)(1+q)}}.
\end{equation*}

\vspace{3mm}

    \section{Geometric properties of the free boundary}\label{Section 5}
This section is divided into two subsections. In the first subsection, we prove the non-degeneracy of dead-core solution $ (u,v) $ in virtue of the radial solution obtained in Section \ref{Section 4} and weak comparison principle (Lemma \ref{Sec3:le1}). Finally, in the second subsection, we prove porosity of the free boundary, which may be of own interest.

\subsection{Non-degeneracy of solutions}
With Section \ref{Section 4} and Lemma \ref{Sec3:le1}, we now proceed with the proof of Theorem \ref{Thm2}.

\begin{proof}[{\bf Proof of Theorem~\ref{Thm2}}]
Since $ u $ and $ v $ are continuous, it suffices to show the theorem for the points $ x_{0} \in \{|(u,v)|>0\} \cap B_{1/2} $. Set
\begin{equation*}
  \psi_{1}(x)= c_{1} |x-x_{0}|^{\alpha}, \ \ \psi_{2}(x)= c_{2}|x-x_{0}|^{\beta},
\end{equation*}
for $ \alpha := \frac{(1+q)(2+p)+\lambda_{1}(2+q)}{(1+p)(1+q)-\lambda_{1}\lambda_{2}} $, $ \beta := \frac{(1+p)(2+q)+\lambda_{2}(2+p)}{(1+p)(1+q)-\lambda_{1}\lambda_{2}}$, and $ c_{1} \in (0,A) $, $ c_{2} \in (0,B) $ are to be determined.

A routine calculation reveals that
\begin{equation*}
  (\psi_{1})_{i} = c_{1} \alpha |x-x_{0}|^{\alpha-2}(x_{i}-x_{0,i}), \ \ (\psi_{2})_{i} = c_{2} \beta |x-x_{0}|^{\beta-2} (x_{i}-x_{0,i}), \ \ i,j=1,2,\cdots,n.
\end{equation*}
Further, direct calculations yield that
\begin{equation*}
  (\psi_{1})_{ij}= c_{1} \alpha |x-x_{0}|^{\alpha-2} \delta_{ij} + c_{1}\alpha(\alpha-2) |x-x_{0}|^{\alpha-4}(x_{i}-x_{0,i})(x_{j}-x_{0,j}),
\end{equation*}
and
\begin{equation*}
  (\psi_{2})_{ij}= c_{2} \beta |x-x_{0}|^{\alpha-2} \delta_{ij} + c_{2}\beta(\beta-2) |x-x_{0}|^{\alpha-4}(x_{i}-x_{0,i})(x_{j}-x_{0,j}).
\end{equation*}
Using \hyperref[Section1:ass1]{\bf (A1)}, \hyperref[Section1:ass4]{\bf (A4)} and system \eqref{DCP} to give
\begin{equation*}
  |D\psi_{1}|^{p} F(D^{2}\psi_{1},x) - (\psi_{2})_{+}^{\lambda_{1}} \leq \big[ \Lambda(c_{1}\alpha)^{1+p}(n+\alpha-2) - c_{2}^{\lambda_{1}}   \big]|x-x_{0}|^{\beta\lambda_{1}}
\end{equation*}
and
\begin{equation*}
  |D\psi_{2}|^{q} F(D^{2}\psi_{2},x) - (\psi_{1})_{+}^{\lambda_{2}} \leq \big[ \Lambda(c_{2}\beta)^{1+q}(n+ \beta -2) - c_{1}^{\lambda_{2}}   \big]|x-x_{0}|^{\alpha\lambda_{2}}.
\end{equation*}
Choosing suitable $ c_{1}, c_{2} $ such that
\begin{equation*}
  \Lambda(c_{1}\alpha)^{1+p}(n+\alpha-2) - c_{2}^{\lambda_{1}} \leq 0 \ \ \text{and} \ \ \Lambda(c_{2}\beta)^{1+q}(n+ \beta -2) - c_{1}^{\lambda_{2}} \leq 0,
\end{equation*}
which implies that
\begin{equation*}
  |D\psi_{1}|^{p} F(D^{2}\psi_{1},x) - (\psi_{2})_{+}^{\lambda_{1}} \leq 0 \ \ \text{and} \ \ |D\psi_{2}|^{q} F(D^{2}\psi_{2},x) - (\psi_{1})_{+}^{\lambda_{2}} \leq 0.
\end{equation*}

Next we shall prove the following claim:

{\bf Claim}. There exists $ \xi \in \partial B_{r}(x_{0}) $ such that
\begin{equation*}
  (u(\xi), v(\xi)) \geq (\psi_{1}(\xi), \psi_{2}(\xi)).
\end{equation*}
In other words, $ u(\xi) \geq \psi_{1}(\xi) $ and $ v(\xi) \geq \psi_{2}(\xi) $. Once the claim is established, we have
\begin{equation*}
  \sup_{\overline{B_{r}(x_{0})}} |(u,v)| \geq |(u(\xi),v(\xi))| \geq c r^{\frac{2}{(1+p)(1+q)-\lambda_{1}\lambda_{2}}}.
\end{equation*}
Now it remains to prove the claim. If the conclusion is false, then for any $ \xi \in \partial B_{r}(x_{0}) $,
\begin{equation*}
  (u(\xi), v(\xi)) <  (\psi_{1}(\xi), \psi_{2}(\xi)).
\end{equation*}
Define
\begin{equation*}
\widetilde{\psi_{1}}=\left\{
     \begin{aligned}
     &  \min \{u, \psi_{1}\}, \ \ x \in B_{r}(x_{0})       \\
     &  u, \ \ \ \ \ \ \ x \in B_{r}^{c}(x_{0})       \\
     \end{aligned}
     \right.
\end{equation*}
and
\begin{equation*}
\widetilde{\psi_{2}}=\left\{
     \begin{aligned}
     &  \min \{v, \psi_{2}\}, \ \ x \in B_{r}(x_{0})       \\
     &  v, \ \ \ \ \ \ \ x \in B_{r}^{c}(x_{0}) .      \\
     \end{aligned}
     \right.
\end{equation*}
It is clear that $ \widetilde{\psi_{i}} \in C^{0}(B_{1}), i=1,2 $. Since $ (\psi_{1},\psi_{2}) $ is a viscosity super-solution of \eqref{DCP}, and $ (u,v) $ is also a viscosity super-solution of \eqref{DCP}, then $ (\widetilde{\psi_{1}}, \widetilde{\psi_{2}}) $ also is a super-solution. By Lemma \ref{Sec3:le1}, we obtain
\begin{equation*}
 0 = \widetilde{\psi_{1}}(x_{0}) \geq u(x_{0}) >0 \ \ \text{or} \ \  0 = \widetilde{\psi_{2}}(x_{0}) \geq v(x_{0}) >0,
\end{equation*}
which is a contradiction. Hence, this completes the proof of Theorem \ref{Thm2}.
\end{proof}

As a direct consequence of Theorem \ref{Thm1} and Theorem \ref{Thm2}, we have the following:

\begin{Corollary}\label{Se6:coro1}
Suppose that \hyperref[Section1:ass1]{\bf (A1)}, \hyperref[Section1:ass2b]{\bf (A2b)}, \hyperref[Section1:ass3]{\bf (A3)} and \hyperref[Section1:ass4]{\bf (A4)} hold. Let $ (u,v)  $ be a non-negative viscosity solution to \eqref{DCP}, and $ x_{0} \in \partial \{ |(u,v)|>0 \}     $, then
\begin{equation*}
  \frac{1}{C} r^{\frac{2}{(1+p)(1+q)-\lambda_{1}\lambda_{2}}} \leq  \sup_{B_{r}(x_{0})} |(u,v)|  \leq  C r^{\frac{2}{(1+p)(1+q)-\lambda_{1}\lambda_{2}}},
\end{equation*}
where  $ r >0 $ is small enough and $ C = C(n, \lambda, \Lambda, p, q, \lambda_{1}, \lambda_{2}, ||u||_{L^{\infty}(B_{1})}, ||v||_{L^{\infty}(B_{1})})$ is a positive constant.
\end{Corollary}

\subsection{Porosity of the free boundary}
Once we demonstrate the sharp asymptotic behavior of free boundary problem, it naturally follows that we can derive certain weak geometric characteristics of the phase zone. Hence, we shall prove the porosity of the free boundary $ \partial \{ |(u,v)|>0 \} $ in this subsection.

\begin{Corollary}[{\bf Uniform positive density}]
 Suppose that \hyperref[Section1:ass1]{\bf (A1)}, \hyperref[Section1:ass2b]{\bf (A2b)}, \hyperref[Section1:ass3]{\bf (A3)} and \hyperref[Section1:ass4]{\bf (A4)} hold. Let $  (u,v) \geq 0 $ be a bounded viscosity solution to \eqref{DCP}, and $ y \in \partial \{ |(u,v)|>0\} \cap B_{1/2} $, then
\begin{equation*}
  |B_{\rho}(y) \cap \{|(u,v)|>0\}| \geq c \rho^{n} \ \ \text{for any} \ \  \rho \in (0,1/2),
\end{equation*}
where $ c > 0 $ is a universal constant.
\end{Corollary}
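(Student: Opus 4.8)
The plan is to deduce the uniform positive density of the positivity set $\{|(u,v)|>0\}$ directly from the non-degeneracy estimate of Theorem \ref{Thm2} (equivalently Corollary \ref{Se6:coro1}) combined with the optimal growth bound of Theorem \ref{Thm1}. Fix $y \in \partial\{|(u,v)|>0\}\cap B_{1/2}$ and $\rho \in (0,1/2)$. By Theorem \ref{Thm2} there is a point $z \in \overline{B_{\rho/2}(y)}$ such that
\begin{equation*}
|(u(z),v(z))| \;\geq\; c_1 \left(\frac{\rho}{2}\right)^{\frac{2}{(1+p)(1+q)-\lambda_1\lambda_2}},
\end{equation*}
where $c_1$ depends only on $n$. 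Without loss of generality assume the first component realizes a comparable share, so that $u(z)$ (or $v(z)$) is bounded below by a dimensional multiple of $\rho^{\alpha}$ (resp. $\rho^{\beta}$) with $\alpha,\beta$ as in Theorem \ref{Thm1}. In particular $z \in \{|(u,v)|>0\}$, and $z$ sits inside $B_{\rho}(y)$.

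Next I would let $d := \operatorname{dist}(z,\partial\{|(u,v)|>0\})$ and show that $d \geq \theta \rho$ for a universal constant $\theta \in (0,1)$. Indeed, if $d$ were too small, pick $w \in \partial\{|(u,v)|>0\}$ with $|z-w|=d$; then Corollary \ref{Section4:coro2} (the finer control near the free boundary) gives
\begin{equation*}
|(u(z),v(z))| \;\leq\; C\, d^{\frac{2}{(1+p)(1+q)-\lambda_1\lambda_2}}.
\end{equation*}
Comparing this upper bound with the lower bound from the previous paragraph forces
\begin{equation*}
C\, d^{\frac{2}{(1+p)(1+q)-\lambda_1\lambda_2}} \;\geq\; c_1 \left(\tfrac{\rho}{2}\right)^{\frac{2}{(1+p)(1+q)-\lambda_1\lambda_2}},
\end{equation*}
hence $d \geq \theta\rho$ with $\theta := \tfrac12 (c_1/C)^{\frac{(1+p)(1+q)-\lambda_1\lambda_2}{2}} \in (0,1)$ (note $(1+p)(1+q)-\lambda_1\lambda_2>0$ by Assumption \ref{Section1:ass3}, so the exponent is positive and the inequality is not reversed).

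Finally, the ball $B_{\theta\rho}(z)$ is entirely contained in $\{|(u,v)|>0\}$ by the definition of $d$, and it is contained in $B_{\rho}(y)$ since $|z-y|<\rho/2$ and, shrinking $\theta$ if necessary to $\theta \le 1/2$, $|z-y| + \theta\rho < \rho$. Therefore
\begin{equation*}
|B_{\rho}(y)\cap\{|(u,v)|>0\}| \;\geq\; |B_{\theta\rho}(z)| \;=\; \omega_n \theta^n \rho^n \;=:\; c\,\rho^n,
\end{equation*}
with $c = \omega_n\theta^n$ a universal constant, as claimed. The only delicate point is making sure that the constants $c_1$ and $C$ in the non-degeneracy and growth estimates are genuinely universal (depending only on $n$ and the structural data $p,q,\lambda_1,\lambda_2,\lambda,\Lambda$) and independent of the particular free boundary point $y$, so that $\theta$ — and hence $c$ — does not degenerate; this is already guaranteed by the statements of Theorems \ref{Thm1}--\ref{Thm2} and their corollaries, which is why the argument goes through cleanly. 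I would also remark that the same reasoning yields porosity of $\partial\{|(u,v)|>0\}$, since the complementary ball $B_{\theta\rho}(z)$ lies in the positivity set and thus avoids the free boundary.
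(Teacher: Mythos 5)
Your argument is correct and follows essentially the same strategy as the paper's: use the non-degeneracy estimate (Theorem \ref{Thm2}) to locate a point inside $B_{\rho}(y)$ where $|(u,v)|$ has quantitative lower growth, then invoke the sharp upper growth bound (the paper uses Theorem \ref{Thm1} directly; you use its immediate Corollary \ref{Section4:coro2}) to deduce that this point is at distance $\gtrsim \rho$ from the free boundary, hence carries a ball of radius $\theta\rho$ inside the positivity set. Your only cosmetic deviation is taking the non-degeneracy point in $\overline{B_{\rho/2}(y)}$ rather than on $\partial B_{\rho}(y)$ as the paper does, which has the small advantage that (after shrinking $\theta\le 1/2$) the ball $B_{\theta\rho}(z)$ sits entirely inside $B_{\rho}(y)$ rather than only having comparable-measure overlap with it.
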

\begin{proof}
Applying Theorem \ref{Thm2}, we have that there exists $ y_{0} \in \partial B_{\rho}(y) \cap \{|u,v|>0\}   $ such that
\begin{equation}\label{Section6.2:eq1}
|(u(y_{0}), v(y_{0}))| \geq c_{1} \rho^{\frac{2}{(1+p)(1+q)-\lambda_{1}\lambda_{2}}}.
\end{equation}
Furthermore, for a small $ \theta > 0 $, we observe
\begin{equation}\label{Section6.2:eq2}
  B_{\theta\rho}(y_{0}) \subset \{ |(u,v)|>0 \}.
\end{equation}
Indeed, if there exists $ \widehat{y} $ such that
$ \widehat{y} \in B_{\theta\rho}(y_{0}) \cap  \partial \{ |(u,v)|>0 \}  $, then from Theorem \ref{Thm1}, it deduces
\begin{equation*}
  c_{1} \rho^{\frac{2}{(1+p)(1+q)-\lambda_{1}\lambda_{2}}} \leq |(u(y_{0}), v(y_{0}))|  \leq  \sup_{B_{\theta\rho}(\widehat{y})} |(u,v)| \leq c_{2} (\theta \rho)^{\frac{2}{(1+p)(1+q)-\lambda_{1}\lambda_{2}}},
\end{equation*}
which is a contradiction, provided we choose
$$ 0< \theta < \bigg( \frac{c_{1}}{c_{2}} \bigg)^{\frac{(1+p)(1+q)-\lambda_{1}\lambda_{2}}{2}}.   $$
Therefore we have shown that \eqref{Section6.2:eq2} holds, and
 \begin{equation*}
   |B_{\rho}(y) \cap \{|(u,v)|>0\}| \geq |B_{\rho}(y) \cap B_{\theta\rho}(y_{0})  | \geq c \rho^{n}.
 \end{equation*}
This completes the proof.
\end{proof}

Before proving the porosity of the free boundary, we recall a concept concerning the porosity of set.

\begin{Definition}[{\bf \cite[Definition 3.8]{PSU12}}]
A set $ V \subset \mathbb{R}^{n} $ is called porous with porosity $ \tau $, if there exists an $ R >0 $ such that $ \forall x \in V $ and $ \forall r \in (0, R)$ there exists $ y \in \mathbb{R}^{n} $ such that
      $$ B_{\tau r}(y)   \subset  B_{r}(x) \setminus V .       $$
\end{Definition}

\begin{Corollary}[{\bf Porosity of the free boundary}]
  Suppose that \hyperref[Section1:ass1]{\bf (A1)}, \hyperref[Section1:ass2b]{\bf (A2b)}, \hyperref[Section1:ass3]{\bf (A3)} and \hyperref[Section1:ass4]{\bf (A4)} hold. Let $  (u,v) \geq 0 $ be a bounded viscosity solution to \eqref{DCP}. Then, there exists a constants $ 0< \epsilon \leq 1 $ such that
  $$ \mathscr{H}^{n-\epsilon}(\partial \{|(u,v)|>0\}\cap B_{1/2}) < \infty .  $$
\end{Corollary}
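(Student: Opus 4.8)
The goal is to show that the free boundary $\partial\{|(u,v)|>0\}\cap B_{1/2}$ has finite $\mathscr H^{n-\epsilon}$-measure for some $\epsilon\in(0,1]$. The standard mechanism here is to deduce porosity of the free boundary from the two-sided estimate in Corollary \ref{Se6:coro1}, and then invoke the classical fact that a porous set in $\mathbb R^n$ has Hausdorff dimension strictly less than $n$, indeed with an explicit bound $\mathscr H^{n-\epsilon}$-finiteness for $\epsilon$ depending only on $n$ and the porosity constant (see, e.g., \cite[Definition 3.8]{PSU12} and the accompanying remarks, or Zajíček's theorem). So the proof reduces to exhibiting a uniform porosity constant $\tau$ for $\partial\{|(u,v)|>0\}\cap B_{1/2}$.

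First I would fix $x_0\in\partial\{|(u,v)|>0\}\cap B_{1/2}$ and $r\in(0,r_0)$ small, and look at $\sup_{B_r(x_0)}|(u,v)|$. By the non-degeneracy half of Corollary \ref{Se6:coro1} there is a point $y\in \overline{B_{r/2}(x_0)}$ (apply Theorem \ref{Thm2} at $x_0$ with radius $r/2$) with
\[
|(u(y),v(y))|\geq \tfrac1C\big(\tfrac r2\big)^{\frac{2}{(1+p)(1+q)-\lambda_1\lambda_2}}.
\]
In particular $y\in\{|(u,v)|>0\}$, and it sits well inside $B_r(x_0)$. Now I claim there is $\theta=\theta(n,p,q,\lambda_1,\lambda_2)\in(0,1)$ such that $B_{\theta r}(y)\subset\{|(u,v)|>0\}$; this is exactly the argument already run in the "Uniform positive density" corollary: if some $\widehat y\in B_{\theta r}(y)\cap\partial\{|(u,v)|>0\}$, then the growth bound (Theorem \ref{Thm1}, upper estimate in Corollary \ref{Se6:coro1}) applied at $\widehat y$ gives
\[
\tfrac1C\big(\tfrac r2\big)^{\frac{2}{(1+p)(1+q)-\lambda_1\lambda_2}}\leq |(u(y),v(y))|\leq \sup_{B_{\theta r}(\widehat y)}|(u,v)|\leq C(\theta r)^{\frac{2}{(1+p)(1+q)-\lambda_1\lambda_2}},
\]
which is impossible once $\theta$ is chosen small enough in terms of the constants $C$ (and the exponent). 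Hence $B_{\theta r}(y)\subset B_r(x_0)\setminus\partial\{|(u,v)|>0\}$, so the free boundary is porous with porosity $\tau=\theta/2$ (a harmless factor to make room inside $B_r(x_0)$), uniformly on $B_{1/2}$, with $R=r_0$.

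Once porosity is established, I would quote the standard measure-theoretic consequence: a porous set $V\subset\mathbb R^n$ with porosity $\tau$ satisfies $\dim_{\mathscr H}V\leq n-c\tau^n$ for a dimensional constant $c>0$, and more precisely $\mathscr H^{n-\epsilon}(V\cap K)<\infty$ for every compact $K$ and $\epsilon=\epsilon(n,\tau)\in(0,1]$ (this is the content behind \cite[Definition 3.8]{PSU12}; cf. also the covering argument: at scale $2^{-k}$ one can delete a fixed proportion $\gtrsim\tau^n$ of each cube meeting $V$, so the number of cubes of side $2^{-k}$ needed to cover $V\cap B_{1/2}$ is $\lesssim (1-c\tau^n)^k 2^{kn}$, which is $o(2^{k(n-\epsilon)})$ for suitable $\epsilon$). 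Applying this with $V=\partial\{|(u,v)|>0\}$ and $K=\overline{B_{1/2}}$ yields $\mathscr H^{n-\epsilon}(\partial\{|(u,v)|>0\}\cap B_{1/2})<\infty$, which is the assertion.

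The only genuinely delicate point is making sure the porosity constant $\theta$ (equivalently $\tau$) is truly universal — i.e. independent of the particular free boundary point $x_0$ and of $r$ — which it is, because the constant $C$ in Corollary \ref{Se6:coro1} depends only on $n$ (and the structural data $p,q,\lambda_1,\lambda_2,\lambda,\Lambda$) and the two-sided bound holds at every free boundary point in $B_{1/2}$ for all small $r$. Everything else — the reduction of finite $\mathscr H^{n-\epsilon}$-measure to porosity, and the covering estimate — is classical and can be cited. I expect the write-up to be essentially a short paragraph mirroring the density-estimate corollary, plus one invocation of the porous-set dimension bound.
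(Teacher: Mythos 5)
Your argument is correct and matches the paper's strategy: both derive uniform porosity of $\partial\{|(u,v)|>0\}$ from the non-degeneracy estimate (Theorem \ref{Thm2}) combined with the upper growth bound (Theorem \ref{Thm1} / Corollary \ref{Section4:coro2}), and then invoke the classical fact that porous sets have Hausdorff dimension strictly below $n$ (the paper cites \cite[Theorem 2.1]{KR97} for this step). The only cosmetic difference is that the paper picks $y$ as the maximizer of $|(u,v)|$ on $\partial B_r(x)$ and therefore re-centers the ball at a point $z$ on the segment $[x,y]$, whereas you place $y$ inside $\overline{B_{r/2}(x_0)}$ so that no re-centering is needed.
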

\begin{proof}
Let $ R > 0 $ and $ x_{0} \in B_{1} $ be such that $ \overline{B_{4R}(x_{0})} \subset B_{1} $. From \cite[Theorem 2.1]{KR97}, it is enough to prove the free boundary $ \partial \{|(u,v)|>0\} \cap B_{R}(x_{0}) $ is a $ \frac{\tau}{2}$-porous set for some $ 0 < \tau <1 $. For this purpose, let $ x \in \partial \{|(u,v)|>0\} \cap B_{R}(x_{0}) $. For any $ r \in (0,R) $, clearly it sees $ \overline{B_{r}(x)} \subset B_{2R}(x_{0}) \subset B_{1} $. Let $ y \in \partial B_{r}(x) $ be such that
\begin{equation*}
  \sup_{\partial B_{r}(x)}|(u,v)| = |(u(y), v(y))|.
\end{equation*}
Applying Theorem \ref{Thm2} above, it deduces
\begin{equation}\label{Section6.2:eq3}
|(u(y), v(y))| \geq c_{1} r^{\frac{2}{(1+p)(1+q)-\lambda_{1}\lambda_{2}}}.
\end{equation}
On the other hand, from Corollary \ref{Section4:coro2}, we obtain
\begin{equation}\label{Section6.2:eq4}
|(u(y), v(y))| \leq c_{2} (d(y))^{\frac{2}{(1+p)(1+q)-\lambda_{1}\lambda_{2}}},
\end{equation}
where $ d(y):=\text{dist}(y, \partial \{|(u,v)|>0\}\cap \overline{B_{2R}(x_{0})})    $.
Combining \eqref{Section6.2:eq3} and \eqref{Section6.2:eq4}, we get
\begin{equation*}
  d(y) \geq \bigg( \frac{c_{1}}{c_{2}}  \bigg)^{\frac{(1+p)(1+q)-\lambda_{1}\lambda_{2}}{2}}r := \tau r,
\end{equation*}
which implies
\begin{equation}\label{Section6.2:eq5}
  B_{\tau r}(y) \subset B_{d(y)}(y) \subset \{|(u,v)|>0 \}.
\end{equation}
Now letting $ z $ be a point on the line segment connecting $ x$ and $ y $ such that $ |z-y| \leq \frac{\tau r}{2} $, then it can easily seen
\begin{equation}\label{Section6.2:eq6}
  B_{\frac{\tau r}{2}}(z) \subset B_{\tau r}(y) \cap B_{r}(x).
\end{equation}
Hence, it follows from \eqref{Section6.2:eq5} and \eqref{Section6.2:eq6} that
\begin{equation*}
  B_{\frac{\tau r}{2}}(z) \subset B_{\tau r}(y) \setminus \partial \{|(u,v)|>0\} \subset  B_{r}(x) \setminus \big(\partial \{|(u,v)|>0\} \cap B_{R}(x_{0})    \big),
\end{equation*}
which yields the desired result.
\end{proof}

We remark that the free boundary $ \partial \{|(u,v)|>0\} $ has Lebesgue measure zero.

\vspace{3mm}

\section{Liouville type results and blow-up analysis}\label{Section 6}
In this section, we give an application of Theorem \ref{Thm1}, namely, Liouville-type result. It shows that the dead-core solution $ (u,v) $ is identical zero in $ \mathbb{R}^{n} $ provided the solution vanishes at origin and has a growth control at infinity.

The first result in this section is formulated as follows.

\begin{Theorem}[{\bf Liouville-type result}]
\label{Section7:Thm1}
Under the hypotheses of Theorem \ref{Thm1}, let $ (u,v) $ be a non-negative viscosity solution of
\begin{equation*}
\left\{
     \begin{aligned}
     &  |Du|^{p} F(D^{2}u,x) = (v_{+})^{\lambda_{1}}\ \ \text{in} \ \ \mathbb{R}^{n}    \\
     &  |Dv|^{q} G(D^{2}v,x) =  (u_{+})^{\lambda_{2}} \ \ \text{in} \ \ \mathbb{R}^{n},       \\
     \end{aligned}
     \right.
\end{equation*}
and $ u(0) = v(0) = 0 $. If
\begin{equation}\label{Section7:eq1}
|(u(x),v(x))| = o\bigg(|x|^{\frac{2}{(1+p)(1+q)-\lambda_{1}\lambda_{2}}}\bigg) \ \ \text{as} \ \ |x|\rightarrow \infty,
\end{equation}
then $ u \equiv v \equiv 0 $ in $ \mathbb{R}^{n} $.
\end{Theorem}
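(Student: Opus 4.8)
The plan is to exploit the sharp growth rate from Theorem~\ref{Thm1} together with a scaling (blow-down) argument. First I would argue by contradiction: suppose $(u,v)\not\equiv(0,0)$, say $u(x_1)>0$ for some $x_1\in\mathbb{R}^n$. Since $u(0)=v(0)=0$, the origin lies on the free boundary $\partial\{|(u,v)|>0\}$. The idea is to rescale: for each large $R>0$ set
\begin{equation*}
u_R(x):=\frac{u(Rx)}{R^{\alpha}},\qquad v_R(x):=\frac{v(Rx)}{R^{\beta}},
\end{equation*}
where $\alpha=\frac{(1+q)(2+p)+\lambda_1(2+q)}{(1+p)(1+q)-\lambda_1\lambda_2}$ and $\beta=\frac{(1+p)(2+q)+\lambda_2(2+p)}{(1+p)(1+q)-\lambda_1\lambda_2}$ are precisely the exponents appearing in Theorem~\ref{Thm1}. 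A routine computation (using $\alpha p+\alpha-2=\beta\lambda_1$ and the analogous identity for $v$, which are exactly what fixed $\alpha,\beta$) shows that $(u_R,v_R)$ solves a system of the same form \eqref{DCP} with rescaled operators $F_R(M,x)=R^{2-\alpha}F(R^{\alpha-2}M,Rx)$ and $G_R(M,x)=R^{2-\beta}G(R^{\beta-2}M,Rx)$, which still satisfy Assumptions~\ref{Section1:ass1}--\ref{Section1:ass2} with the same ellipticity constants and moduli controlled independently of $R$.

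Next I would use the growth hypothesis \eqref{Section7:eq1}. Since $|(u(x),v(x))|=o(|x|^{2/((1+p)(1+q)-\lambda_1\lambda_2)})$, one checks that for any fixed $\rho>0$,
\begin{equation*}
\sup_{B_\rho}\Big((u_R)_+^{\frac{2}{(1+q)(2+p)+\lambda_1(2+q)}}+(v_R)_+^{\frac{2}{(1+p)(2+q)+\lambda_2(2+p)}}\Big)=\sup_{B_{\rho R}}|(u,v)|\cdot R^{-\frac{2}{(1+p)(1+q)-\lambda_1\lambda_2}}\longrightarrow 0
\end{equation*}
as $R\to\infty$; equivalently $u_R,v_R\to 0$ locally uniformly. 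By the interior $C^{1,\gamma}_{\mathrm{loc}}$ estimates of Lemma~\ref{Section2:lem1} (and Remark~\ref{Sec2:rk2} for the singular range), the $u_R,v_R$ are equibounded in $C^{1,\gamma}$ on compact sets, so the convergence $u_R,v_R\to 0$ improves to $C^1_{\mathrm{loc}}$; in particular $Du_R(0)\to 0$. On the other hand, applying Corollary~\ref{Section4:coro1} to the fixed solution $(u,v)$ at the point $x_1$ (which has positive distance $d$ to the free boundary) gives $|Du(x_1)|>0$ a fixed constant. Rescaling: choosing $R$ with $x_1=Ry_1$ for a suitable comparison point, the gradient lower bound transfers to a uniform lower bound for $|Du_R(y_1)|$ at a point $y_1$ bounded away from $0$ and $\infty$, contradicting $u_R\to0$ in $C^1_{\mathrm{loc}}$ on the region where $u_R>0$.

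The cleanest version actually avoids Corollary~\ref{Section4:coro1}: since $u_R\to0$ and $v_R\to0$ uniformly on $B_2$, while the pair is a viscosity solution of the rescaled system with the same structural constants, I would instead pass to the limit directly. By the stability of viscosity solutions under locally uniform convergence (together with the flatness/compactness machinery behind Lemma~\ref{Se4:lem1}, i.e.\ the cutting lemma \cite{LS13} and Lemma~\ref{Section2:lem3}), any limit is the zero pair — but more usefully, I would fix the normalization so that $\sup_{B_1}|(u,v)|=1$ at scale $R$ chosen so that this sup is attained; then $\sup_{B_1}|(u_R,v_R)|$ stays order one, yet the growth condition forces $\sup_{B_k}|(u_R,v_R)|=o(k^{2/((1+p)(1+q)-\lambda_1\lambda_2)})$ uniformly. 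Passing to a limit $(u_\infty,v_\infty)$ on $\mathbb{R}^n$, nonnegative, vanishing at $0$, solving \eqref{Section7:eqq1} with $\sup_{B_1}|(u_\infty,v_\infty)|$ still order one, and satisfying the strict bound \eqref{Section7:eq5} (which passes to the limit since $o(\cdot)$ beats the threshold), Theorem~\ref{Section7:Thm2} forces $u_\infty\equiv v_\infty\equiv0$, contradicting the normalization. The main obstacle I anticipate is the bookkeeping in the rescaling — verifying that the rescaled operators genuinely preserve Assumptions~\ref{Section1:ass1}--\ref{Section1:ass2} with $R$-independent constants (the coefficient modulus $\omega_i(R|x-y|)$ is the delicate point, though on a fixed compact set times $R$ it still tends to a uniform modulus) — and ensuring the compactness argument is applied on the non-degenerate region $\{u_R>0\}$ where Lemma~\ref{Section2:lem1} is available. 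I expect Theorem~\ref{Section7:Thm2} to do the heavy lifting once the blow-down limit is set up correctly, so the proof should reduce to: rescale, extract a locally uniform limit via the a priori estimates, and invoke the quantitative Liouville theorem.
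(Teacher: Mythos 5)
The scaling setup in your proposal matches the paper's — both proofs rescale by $u_R(x)=R^{-\alpha}u(Rx)$, $v_R(x)=R^{-\beta}v(Rx)$ and use that $(u_R,v_R)$ solves a system of the same form with $R$-independent structural constants. However, both of the closing arguments you sketch have genuine gaps.

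\textbf{On the first variant.} Corollary~\ref{Section4:coro1} gives an \emph{upper} bound $|\nabla u(z)|\le C\,\mathrm{dist}(z,\partial\{|(u,v)|>0\})^{\theta}$; it says nothing about a positive lower bound, so the step ``$|Du(x_1)|>0$ a fixed constant'' is unsupported (and in general false: $u$ could attain a local maximum at an interior point $x_1$). Moreover, if $x_1$ is fixed and $R\to\infty$, then $y_1=x_1/R\to 0$, so $y_1$ is not ``bounded away from $0$''; the $C^1_{\mathrm{loc}}$ convergence $u_R\to 0$ therefore yields no contradiction at $y_1$.

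\textbf{On the ``cleanest version.''} The normalization is inconsistent. Under hypothesis \eqref{Section7:eq1}, for every fixed $\rho$ one has
\begin{equation*}
\sup_{B_\rho}|(u_R,v_R)| = R^{-\frac{2}{(1+p)(1+q)-\lambda_1\lambda_2}}\sup_{B_{\rho R}}|(u,v)|\longrightarrow 0,
\end{equation*}
so the PDE-preserving rescaling necessarily sends the family to the zero pair, and there is no scale $R\to\infty$ at which $\sup_{B_1}|(u_R,v_R)|$ ``stays order one.'' Any attempt to force such a normalization (e.g.\ dividing by $\sup_{B_R}u$ rather than by $R^\alpha$) destroys the scaling invariance of \eqref{DCP}, so the rescaled pair no longer solves a system of the same form. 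Finally, even if a nontrivial blow-down existed, invoking Theorem~\ref{Section7:Thm2} would only cover $0\le p,q<\infty$ (its proof relies on the radial barriers of Section~\ref{Section 4} and Lemma~\ref{Sec3:le1}, which the paper notes fail for $-1<p,q<0$), whereas Theorem~\ref{Section7:Thm1} is asserted for the full range of Theorem~\ref{Thm1}.

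\textbf{What the paper does instead.} After establishing $|(u_k,v_k)|\to 0$ locally uniformly (by splitting into $|kx_k|\to\infty$ and $|kx_k|$ bounded), the paper applies Theorem~\ref{Thm1} directly to the rescaled solutions. Because the constant in that estimate scales with $\sup|(u_k,v_k)|$, one obtains
$\sup_{B_{1/2}}|(u_k,v_k)|/|x|^{2/((1+p)(1+q)-\lambda_1\lambda_2)}\to 0$; then the scale-invariance of this ratio under the blow-down is exploited to conclude that $|(u,v)|$ vanishes identically. No Liouville theorem, no gradient lower bound, and no compactness extraction of a nontrivial limit is needed — your strategy should be redirected toward this quantitative self-improving estimate rather than toward producing a nontrivial blow-down profile.
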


\begin{proof}
We first define the scaled auxiliary functions $ u_{k}, v_{k} $ as follows:
 \begin{equation*}
   u_{k}(x) : =   k^{-\frac{(1+q)(2+p)+\lambda_{1}(2+q)}{(1+p)(1+q)-\lambda_{1}\lambda_{2}}} u(kx), \ \ v_{k}(x) := k^{-\frac{(1+p)(2+q)+\lambda_{2}(2+p)}{(1+p)(1+q)-\lambda_{1}\lambda_{2}}} v(kx).
 \end{equation*}
Direct calculations yield that $ u_{k}(0) = v_{k}(0) = 0$, and $ (u_{k},v_{k}) $ satisfies
\begin{equation*}
\left\{
     \begin{aligned}
     &  |Du_{k}|^{p} F_{k}(D^{2}u_{k},x) = (v_{k})_{+}^{\lambda_{1}}\ \ \text{in} \ \ \mathbb{R}^{n}    \\
     &  |Dv_{k}|^{q} G_{k}(D^{2}v_{k},x) =  (u_{k})_{+}^{\lambda_{2}} \ \ \text{in} \ \ \mathbb{R}^{n},      \\
     \end{aligned}
     \right.
\end{equation*}
in the viscosity sense, where
\begin{equation*}
  F_{k}(D^{2}u_{k},x) = k^{\frac{p(1+q)-\lambda_{1}(2+q+2\lambda_{2})}{(1+p)(1+q)-\lambda_{1}\lambda_{2}}} F\big(k^{\frac{\lambda_{1}(2+q+2\lambda_{2})-p(1+q)}{(1+p)(1+q)-\lambda_{1}\lambda_{2}}}D^{2}u_{k},kx\big),
\end{equation*}
and
\begin{equation*}
  G_{k}(D^{2}v_{k},x) = k^{\frac{q(1+p)-\lambda_{2}(2+p+2\lambda_{1})}{(1+p)(1+q)-\lambda_{1}\lambda_{2}}} G\big(k^{\frac{\lambda_{2}(2+p+2\lambda_{1})-q(1+p)}{(1+p)(1+q)-\lambda_{1}\lambda_{2}}}D^{2}v_{k},kx\big).
\end{equation*}
It is easy to check that $ F_{k} $ and $ G_{k} $ still satisfy \hyperref[Section1:ass1]{\bf (A1)} and \hyperref[Section1:ass2a]{\bf (A2a)}. Moreover, we notice that
\begin{equation}\label{Section7:eq2}
|(u_{k},v_{k})| \rightarrow 0, \ \ \text{as} \ \ k \rightarrow \infty.
\end{equation}
Indeed, let $ x_{k} \in \overline{B_{r}} $ be such that $ \sup_{\overline{B_{r}}} |(u_{k},v_{k})| = |(u_{k}(x_{k}), v_{k}(x_{k}))|  $. Now we consider the following two cases:

Case 1. If $ |k x_{k}| \rightarrow \infty $, as $ k \rightarrow \infty $. By \eqref{Section7:eq1}, we get
\begin{align*}
  |(u_{k}(x_{k}), v_{k}(x_{k}))| & = k^{-\frac{2}{(1+p)(1+q)-\lambda_{1}\lambda_{2}}} |(u(kx_{k}), v(kx_{k}))|    \\
  & \leq |kx_{k}|^{-\frac{2}{(1+p)(1+q)-\lambda_{1}\lambda_{2}}} |(u(kx_{k}), v(kx_{k}))| \rightarrow 0, \ \ \text{as} \ \  k \rightarrow \infty.
\end{align*}

Case 2. If $ |k x_{k}| $ is bounded, then from Theorem \ref{Thm1}, we have
\begin{equation}\label{Section7:eq3}
  |(u_{k}(x_{k}), v_{k}(x_{k}))| \leq C_{k} |x_{k}|^{\frac{2}{(1+p)(1+q)-\lambda_{1}\lambda_{2}}},
\end{equation}
where the constant $ C_{k}= C_{k}(||u_{k}||_{L^{\infty}(B_{1})}, ||v_{k}||_{L^{\infty}(B_{1})}) $, and $ C_{k} \rightarrow 0 $ as $ k \rightarrow \infty $. Scaling back to $ u $, it gives
\begin{equation*}
  |(u(kx_{k}), v(kx_{k}))| \leq C_{k} |kx_{k}|^{\frac{2}{(1+p)(1+q)-\lambda_{1}\lambda_{2}}} \rightarrow 0, \ \ \text{as} \ \  k \rightarrow \infty.
\end{equation*}
Hence, \eqref{Section7:eq2} is shown.

Now if the conclusion of Theorem \ref{Section7:Thm1} is not true, then there exists $ y_{0} \in \mathbb{R}^{n} $ such that $ |(u(y_{0}), v(y_{0}))| >0   $. In the spirit of \eqref{Section7:eq3}, it infers
\begin{equation}\label{Section7:eq4}
  \sup_{B_{1/2}} \frac{|(u_{k}(x), v_{k}(x))|}{|x|^{\frac{2}{(1+p)(1+q)-\lambda_{1}\lambda_{2}}}}  \leq \frac{|(u(y_{0}), v(y_{0})))|}{40|y_{0}|^{\frac{2}{(1+p)(1+q)-\lambda_{1}\lambda_{2}}}}
\end{equation}
for large enough $ k $. Indeed, taking $ k \gg 2|y_{0}| $, from \eqref{Section7:eq4}, one has
\begin{equation*}
\frac{|(u(y_{0}), v(y_{0}))|}{|y_{0}|^{\frac{2}{(1+p)(1+q)-\lambda_{1}\lambda_{2}}}} \leq \sup_{B_{k/2}} \frac{|(u(z), v(z))|}{|z|^{\frac{2}{(1+p)(1+q)-\lambda_{1}\lambda_{2}}}}       =\sup_{B_{1/2}} \frac{|(u_{k}(x), v_{k}(x))|}{|x|^{\frac{2}{(1+p)(1+q)-\lambda_{1}\lambda_{2}}}}  \leq \frac{|(u(y_{0}), v(y_{0}))|}{40|y_{0}|^{\frac{2}{(1+p)(1+q)-\lambda_{1}\lambda_{2}}}},
\end{equation*}
which implies $ |(u(y_{0}), v(y_{0}))| = 0 $. This contradicts with $ |(u(y_{0}), v(y_{0}))| > 0 $. Hence, we finish the proof of theorem.
\end{proof}

An improved Liouville-type result of Theorem \ref{Section7:Thm1} is Theorem \ref{Section7:Thm2}.

\begin{proof}[{\bf Proof of Theorem \ref{Section7:Thm2}}]
We follow the ideas of \cite[Theorem 5.2]{AT24}(see also     \cite[Theorem 5.1]{ALT16} and \cite[Theorem 4.2]{DT20}). Fixing $ R >0 $, we define
\begin{equation}\label{Section7:eq6}
S_{R} = \sup_{\partial B_{R}} |(u,v)| = \sup_{\partial B_{R}} \bigg( u_{+}^{\frac{2}{(1+q)(2+p)+\lambda_{1}(2+q)}} + v_{+}^{\frac{2}{(1+p)(2+q)+\lambda_{2}(2+p)}}    \bigg).
\end{equation}
In the spirit of \eqref{Section7:eq5}, there exist $ 0 < \theta \ll 1 $ and $ R \gg 1 $ such that
\begin{equation}\label{Section7:eq7}
  R^{\frac{-2}{(1+p)(1+q)-\lambda_{1}\lambda_{2}}} S_{R} \leq \theta m,
\end{equation}
where $ m = \min \big\{A^{\frac{2}{(1+q)(2+p)+\lambda_{1}(2+q)}},   B^{\frac{2}{(1+p)(2+q)+\lambda_{2}(2+p)}} \big\} $.

Now we consider the following two auxiliary functions $ \widetilde{u}(x), \widetilde{v}(x) $:
\begin{equation*}
  \widetilde{u}(x) = A(|x|-\rho)_{+}^{\frac{(1+q)(2+p)+\lambda_{1}(2+q)}{(1+p)(1+q)-\lambda_{1}\lambda_{2}}} \ \ \text{and}  \ \ \widetilde{v}(x) = B(|x|-\rho)_{+}^{\frac{(1+p)(2+q)+\lambda_{2}(2+p)}{(1+p)(1+q)-\lambda_{1}\lambda_{2}}}
\end{equation*}
for some $ \rho >0 $. As the previous analysis in Section \ref{Section 4}, one has the dead-core domain $ B_{\rho} = \{|(\widetilde{u},\widetilde{v})|=0\} $. In order to use Lemma \ref{Sec3:le1}, for $ R \gg 1$, we set
\begin{equation}\label{Section7:eq8}
  \rho := R- \bigg( \frac{S_{R}}{m} \bigg)^{\frac{(1+p)(1+q)-\lambda_{1}\lambda_{2}}{2}} \geq \bigg(1-\theta^{\frac{(1+p)(1+q)-\lambda_{1}\lambda_{2}}{2}}\bigg)R,
\end{equation}
where we have used \eqref{Section7:eq7}. Then we combine \eqref{Section7:eq6} and \eqref{Section7:eq8} to obtain
\begin{equation*}
  \widetilde{u}(x) = A \bigg( \frac{S_{R}}{m} \bigg)^{\frac{(1+q)(2+p)+\lambda_{1}(2+q)}{2}}  \geq \sup_{\partial B_{R}} u,
\end{equation*}
and
\begin{equation*}
  \widetilde{v}(x) = B \bigg( \frac{S_{R}}{m} \bigg)^{\frac{(1+p)(2+q)+\lambda_{2}(2+p)}{2} } \geq \sup_{\partial B_{R}} v
\end{equation*}
for any $ x \in \partial B_{R} $. By Lemma \ref{Sec3:le1}, we deduce
\begin{equation}\label{Section7:eq9}
\widetilde{u} \geq u \ \ \text{or} \ \ \widetilde{v} \geq v \ \ \text{in} \ \ B_{R}.
\end{equation}
Thus, taking $ R \rightarrow \infty $ and using \eqref{Section7:eq8} and \eqref{Section7:eq9}, it follows that $ u(x) v(x) = 0 $ in $ \mathbb{R}^{n} $, and so
\begin{equation}\label{Section7:eq10}
  \{ u >0 \}  \subset \{ v=0 \} \ \ \text{and} \ \ \{ v >0 \}  \subset \{ u=0 \}.
\end{equation}
If there exists $ z_{0} \in \mathbb{R}^{n} $ such that $ u(z_{0}) > 0 $. Using the first part of \eqref{Section7:eq10} and the continuity of $ u $, one has $ v = 0 $ in some neighbourhood of $ z_{0} $. From the second equation of \eqref{Section7:eqq1}, we infer $ u = 0 $ in some neighbourhood of $ z_{0} $, which contradicts with $ u(z_{0}) > 0 $. Thereby, $ u \equiv 0 $ holds. Similarly, we can also show $ v \equiv 0 $.
\end{proof}

\begin{remark}
We remark that the inequality in \eqref{Section7:eq5} is sharp. In fact, when
\begin{equation*}
  u(x) = A |x|^{\frac{(1+q)(2+p)+\lambda_{1}(2+q)}{(1+p)(1+q)-\lambda_{1}\lambda_{2}}} \ \ \text{and} \ \ v(x) = B |x|^{\frac{(1+p)(2+q)+\lambda_{2}(2+p)}{(1+p)(1+q)-\lambda_{1}\lambda_{2}}},
\end{equation*}
\eqref{Section7:eq5} becomes an equality, but neither $ u $ nor $ v $ is identically $ 0 $.
\end{remark}

\begin{remark}
We highlight that the proof of Theorem \ref{Section7:Thm2} does not work for $ -1 < p, q < 0 $.
\end{remark}

In the next theorem, we shall study the blow-up analysis over free boundary points. Let $ (u,v) $ be  solution to dead-core problem \eqref{DCP} and $ z_{0} \in \partial \{ |(u,v)|>0 \} \cap B_{1/2} $. For each $ \tau >0 $, we define a blow-up sequence $ |(u_{\tau}, v_{\tau})|: B_{\frac{1}{2\tau}} \rightarrow \mathbb{R}  $ as
\begin{equation*}
   |(u_{\tau}, v_{\tau})| = \frac{|(u(z_{0}+\tau x),v(z_{0}+\tau x))|}{\tau^{\frac{2}{(1+p)(1+q)-\lambda_{1}\lambda_{2}}}},
\end{equation*}
that is to say,
\begin{equation*}
  u_{\tau}(x) = \frac{u(z_{0}+\tau x)}{\tau^{\frac{(1+q)(2+p)+\lambda_{1}(2+q)}{(1+p)(1+q)-\lambda_{1}\lambda_{2}}}} \ \ \text{and} \ \ v_{\tau}(x) = \frac{v(z_{0}+\tau x)}{\tau^{\frac{(1+p)(2+q)+\lambda_{2}(2+p)}{(1+p)(1+q)-\lambda_{1}\lambda_{2}}}}.
\end{equation*}
The next result analyses the limiting profile for any blow-up sequence.

\vspace{1mm}

\begin{Theorem}[{\bf Blow-up limit}]
Let $ z_{0} \in \partial \{ |(u,v)|>0 \} \cap B_{1/2} $ be a free boundary point, and a blow-up sequence $ |(u_{\tau}, v_{\tau})| $. Then, up to a subsequence,
\begin{equation*}
  (u_{\tau}, v_{\tau})  \rightarrow  (u_{0}, v_{0}) \ \ \text{and} \ \ (Du_{\tau}, Dv_{\tau})  \rightarrow  (Du_{0}, Dv_{0}) \ \text{locally uniformly in $ B_{1}$}.
\end{equation*}
Furthermore, $ (u_{0}, v_{0}) $ is a non-negative viscosity solution to the following global degenerate elliptic system:
 \begin{equation}\label{Section7:eq11}
\left\{
     \begin{aligned}
     & |Du_{0}|^{p} F_{0}(D^{2}u_{0},x) = (v_{0})_{+}^{\lambda_{1}} \ \ \text{in} \ \  \mathbb{R}^{n}    \\
     &  |Dv_{0}|^{q} G_{0}(D^{2}v_{0},x) =  (u_{0})_{+}^{\lambda_{2}} \ \ \text{in} \ \ \mathbb{R}^{n},        \\
     \end{aligned}
     \right.
\end{equation}
where $ F_{0}, G_{0} $ are uniformly elliptic operator. In the end, $ 0 \in \partial \{|(u_{0},v_{0})|>0\} $.
\end{Theorem}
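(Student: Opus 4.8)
The plan is to prove the three assertions—local uniform convergence of $(u_\tau,v_\tau)$, local uniform convergence of the gradients, and the limiting system—in that order, and then deduce $0\in\partial\{|(u_0,v_0)|>0\}$ from the non-degeneracy (Theorem~\ref{Thm2}) together with the growth bound of Theorem~\ref{Thm1}. First I would record that $(u_\tau,v_\tau)$ solves a scaled system of the same structural type as \eqref{DCP}: a routine computation shows that
\begin{equation*}
|Du_\tau|^p F_\tau(D^2u_\tau,x)=(v_\tau)_+^{\lambda_1},\qquad |Dv_\tau|^q G_\tau(D^2v_\tau,x)=(u_\tau)_+^{\lambda_2}\quad\text{in }B_{\frac{1}{2\tau}},
\end{equation*}
with $F_\tau(M,x)=\tau^{2-\alpha}F(\tau^{\alpha-2}M,z_0+\tau x)$, $G_\tau(M,x)=\tau^{2-\beta}G(\tau^{\beta-2}M,z_0+\tau x)$, where $\alpha,\beta$ are as in Theorem~\ref{Thm1}; these $F_\tau,G_\tau$ are uniformly elliptic with the \emph{same} constants $\lambda,\Lambda$ and inherit uniformly continuous coefficients. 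The key quantitative input is Theorem~\ref{Thm1} applied at $z_0$: since $z_0$ is a free boundary point, $\sup_{B_{2R}(z_0)}|(u,v)|\le C(2R)^{2/((1+p)(1+q)-\lambda_1\lambda_2)}$, which rescales to a bound $\sup_{B_R}|(u_\tau,v_\tau)|\le C R^{2/((1+p)(1+q)-\lambda_1\lambda_2)}$ uniform in $\tau$; in particular $u_\tau,v_\tau$ are locally uniformly bounded on any fixed ball once $\tau$ is small.

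Next I would upgrade boundedness to precompactness. On a fixed ball $B_R$, the uniform local $L^\infty$ bound together with the interior gradient H\"older estimate Lemma~\ref{Section2:lem1} (for $p,q\ge 0$; Remark~\ref{Sec2:rk2} for $-1<p,q<0$) applied to each equation—whose right-hand sides are uniformly bounded—gives a uniform $C^{1,\alpha_\star}_{\mathrm{loc}}$ bound for $(u_\tau,v_\tau)$ with $\alpha_\star\in(0,1)$ independent of $\tau$. By Arzel\`a--Ascoli and a diagonal argument over an exhaustion $B_{R_j}\uparrow\mathbb{R}^n$, we extract a subsequence with $(u_\tau,v_\tau)\to(u_0,v_0)$ and $(Du_\tau,Dv_\tau)\to(Du_0,Dv_0)$ locally uniformly; clearly $u_0,v_0\ge 0$. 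Simultaneously, by the (pre)compactness of normalized uniformly elliptic operators with equicontinuous coefficients, $F_\tau\to F_0$ and $G_\tau\to G_0$ locally uniformly (up to a further subsequence) for some uniformly elliptic $F_0,G_0$. The stability of viscosity solutions under locally uniform convergence of solutions and operators—exactly as invoked in the proof of Lemma~\ref{Se4:lem1}, using the cutting lemma \cite[Lemma 6]{LS13} and Lemma~\ref{Section2:lem3} to handle the degeneracy/singularity where gradients vanish—then yields that $(u_0,v_0)$ is a viscosity solution of \eqref{Section7:eq11} in all of $\mathbb{R}^n$.

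Finally I would verify $0\in\partial\{|(u_0,v_0)|>0\}$. Since $u_\tau(0)=v_\tau(0)=0$ for every $\tau$ and the convergence is locally uniform, $|(u_0(0),v_0(0))|=0$, so $0\notin\{|(u_0,v_0)|>0\}$; it remains to show $0$ is not an interior point of the zero set, i.e.\ there are points arbitrarily close to $0$ where $|(u_0,v_0)|>0$. This is where I would use non-degeneracy. For each fixed $\tau$, $z_0\in\overline{\{|(u,v)|>0\}}$, so Theorem~\ref{Thm2} gives $\sup_{\overline{B_{\tau r}(z_0)}}|(u,v)|\ge c(\tau r)^{2/((1+p)(1+q)-\lambda_1\lambda_2)}$ for every $r\in(0,1/2)$ (taking $\tau r<1/2$), which after rescaling reads $\sup_{\overline{B_r}}|(u_\tau,v_\tau)|\ge c\,r^{2/((1+p)(1+q)-\lambda_1\lambda_2)}$ with $c$ independent of $\tau$. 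Passing to the limit along the chosen subsequence, $\sup_{\overline{B_r}}|(u_0,v_0)|\ge c\,r^{2/((1+p)(1+q)-\lambda_1\lambda_2)}>0$ for every small $r>0$, so every neighbourhood of $0$ meets $\{|(u_0,v_0)|>0\}$; combined with $|(u_0(0),v_0(0))|=0$ this gives $0\in\partial\{|(u_0,v_0)|>0\}$. The main obstacle is the stability step across the very degenerate/singular free boundary: making the viscosity passage to the limit rigorous where $|Du_\tau|$ or $|Dv_\tau|$ may vanish requires invoking the cutting lemma and Lemma~\ref{Section2:lem3} carefully, exactly as in Lemma~\ref{Se4:lem1}, and simultaneously controlling the convergence of $F_\tau,G_\tau$; the remaining compactness and non-degeneracy arguments are routine given Theorems~\ref{Thm1}--\ref{Thm2} and Lemma~\ref{Section2:lem1}.
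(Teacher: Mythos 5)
Your proposal follows essentially the same route as the paper: the same rescaled operators $F_\tau,G_\tau$, the same growth bound from Theorem~\ref{Thm1}, the same $C^{1,\alpha}$ compactness via Lemma~\ref{Section2:lem1} and Arzel\`a--Ascoli, and the same stability passage to the limit. The one place you go beyond the paper is the final assertion $0\in\partial\{|(u_0,v_0)|>0\}$: the paper simply states it ``holds clearly'' from $0\in\partial\{|(u_\tau,v_\tau)|>0\}$, which by itself does not survive the uniform limit (the interface could a priori recede to infinity). Your invocation of the scale-invariant non-degeneracy estimate from Theorem~\ref{Thm2}, rescaled and passed to the limit, is exactly the quantitative uniformity needed to make that step rigorous, so your version is in fact a clean repair of a gap the paper leaves implicit; just note that this step quietly imports Assumption~\ref{Section1:ass4}, which the stated theorem does not list among its hypotheses.
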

\begin{proof}
A straightforward calculation yields that $ (u_{\tau}, v_{\tau}) $ fulfills
 \begin{equation}\label{Section7:eq12}
\left\{
     \begin{aligned}
     & |Du_{\tau}|^{p} F_{\tau}(D^{2}u_{\tau},x) = (v_{\tau}(x))_{+}^{\lambda_{1}} \ \ \text{in} \ \  B_{\frac{1}{2\tau}}    \\
     &  |Dv_{\tau}|^{q} G_{\tau}(D^{2}v_{\tau},x) =  (u_{\tau}(x))_{+}^{\lambda_{2}} \ \ \text{in} \ \ B_{\frac{1}{2\tau}},      \\
     \end{aligned}
     \right.
\end{equation}
in the viscosity sense, where
\begin{equation*}
  F_{\tau}(D^{2}u_{\tau},x) = \tau^{2-\alpha} F(\tau^{\alpha-2}D^{2}u_{\tau}, z_{0}+\tau x), \ \ G_{\tau}(D^{2}v_{\tau},x) = \tau^{2-\beta} G(\tau^{\beta-2}D^{2}u_{\tau}, z_{0}+\tau x).
\end{equation*}
From Theorem \ref{Thm1}, it follows that
\begin{equation}\label{Section7:eq13}
|u_{\tau}(x)| \leq C |x|^{\frac{(1+q)(2+p)+\lambda_{1}(2+q)}{(1+p)(1+q)-\lambda_{1}\lambda_{2}}} \ \ \text{and} \ \ |v_{\tau}(x)| \leq C |x|^{\frac{(1+p)(2+q)+\lambda_{2}(2+p)}{(1+p)(1+q)-\lambda_{1}\lambda_{2}}}
\end{equation}
for any $ x \in B_{\frac{1}{2\tau}} $.

Now combining \eqref{Section7:eq13}, Lemma \ref{Section2:lem1} and Arzel$ \grave{a}$-Ascoli theorem, then, up to a sequence, we have
\begin{equation*}
  (u_{\tau}, v_{\tau})  \rightarrow  (u_{0}, v_{0}) \ \ \text{and} \ \ (Du_{\tau}, Dv_{\tau})  \rightarrow  (Du_{0}, Dv_{0}) \ \text{locally uniformly in $ B_{1}$}.
\end{equation*}
Taking the limit $ \tau \rightarrow 0 $ in \eqref{Section7:eq12} and using the stability result of viscosity solution, it infers \eqref{Section7:eq11} holds. Finally, since $ 0 \in \partial \{ |(u_{\tau},v_{\tau})|>0 \} $, then $ 0 \in \partial \{|(u,v)|>0\} $ holds clearly.
\end{proof}

An immediate consequence of Corollary \ref{Se6:coro1} is the following:

\begin{Theorem}
Let $ (u_{0}, v_{0}) $ be a blow-up solution at $ z_{0} \in \partial \{ |(u,v)|>0 \}  $. Then, there exists a positive constant $ C=C( n, \lambda, \Lambda, p, q, \lambda_{1}, \lambda_{2}, ||u||_{L^{\infty}(B_{1})}, ||v||_{L^{\infty}(B_{1})}) $ such that
\begin{equation*}
  \frac{1}{C} \leq \liminf_{|x|\rightarrow \infty} \bigg( |x|^{-\frac{2}{(1+p)(1+q)-\lambda_{1}\lambda_{2}}}  |(u_{0}, v_{0})|  \bigg)  \leq  \limsup_{|x|\rightarrow \infty} \bigg( |x|^{-\frac{2}{(1+p)(1+q)-\lambda_{1}\lambda_{2}}}  |(u_{0}, v_{0})|  \bigg) \leq C.
\end{equation*}
\end{Theorem}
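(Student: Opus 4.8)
The plan is to upgrade the two-sided estimate of Corollary~\ref{Se6:coro1}, which controls $(u,v)$ on all sufficiently small balls centred at the free boundary point $z_0$, into a \emph{scale-invariant} estimate for the blow-up limit $(u_0,v_0)$, and then read off the $\liminf$/$\limsup$ assertion. Throughout write $\kappa:=(1+p)(1+q)-\lambda_1\lambda_2>0$, so the exponent under scrutiny is $2/\kappa$.

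First I would record the two facts that drive the argument. From the construction of the blow-up family, up to a subsequence $(u_\tau,v_\tau)\to(u_0,v_0)$ locally uniformly in $\mathbb{R}^n$, with $(u_0,v_0)$ a non-negative viscosity solution of the global system~\eqref{Section7:eq11} and $0\in\partial\{|(u_0,v_0)|>0\}$. Moreover, the exponents $\alpha,\beta$ defining $u_\tau,v_\tau$ are calibrated precisely so that $|(u_\tau,v_\tau)|(x)=\tau^{-2/\kappa}\,|(u,v)|(z_0+\tau x)$; consequently
\begin{equation*}
  \sup_{B_R}|(u_\tau,v_\tau)|=\tau^{-2/\kappa}\sup_{B_{\tau R}(z_0)}|(u,v)|\qquad\text{for all }R>0,\ \tau>0 .
\end{equation*}

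Now fix $R>0$. For every $\tau\in(0,\tfrac{1}{2R})$ one has $\tau R\in(0,\tfrac12)$, so Corollary~\ref{Se6:coro1}, applied to $(u,v)$ at the free boundary point $z_0$ with radius $\tau R$, gives
\begin{equation*}
  \frac{1}{C}(\tau R)^{2/\kappa}\le\sup_{B_{\tau R}(z_0)}|(u,v)|\le C(\tau R)^{2/\kappa},
\end{equation*}
with $C=C(n)$ \emph{independent of} $\tau$. Combining this with the identity above yields $\tfrac1C R^{2/\kappa}\le\sup_{B_R}|(u_\tau,v_\tau)|\le C R^{2/\kappa}$, uniformly for $\tau\in(0,\tfrac1{2R})$; since $(u_\tau,v_\tau)\to(u_0,v_0)$ uniformly on $\overline{B_R}$ the suprema pass to the limit and we obtain
\begin{equation*}
  \frac1C R^{2/\kappa}\le\sup_{B_R}|(u_0,v_0)|\le C R^{2/\kappa}\qquad\text{for every }R>0 .
\end{equation*}
This is the assertion once one makes the implicit suprema explicit: reading $|(u_0,v_0)|$ at scale $|x|$ as $\sup_{B_{|x|}}|(u_0,v_0)|$ (equivalently $\sup_{\partial B_{|x|}}$, since $u_0,v_0$ are subsolutions of uniformly elliptic equations with non-negative right-hand side and hence attain their maxima over $\overline{B_{|x|}}$ on the sphere), letting $|x|\to\infty$ gives both bounds. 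In addition, the pointwise upper bound $|(u_0,v_0)(x)|\le C'|x|^{2/\kappa}$ follows by taking $R=2|x|$.

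The one point I would treat carefully is the \emph{scale-independence} of the constant in Corollary~\ref{Se6:coro1}: by that corollary $C$ depends only on $n$ (more generally only on the structural data $n,\lambda,\Lambda,p,q,\lambda_1,\lambda_2,\omega_1,\omega_2$), and in no case on $\tau$ — the rescaling $x\mapsto z_0+\tau x$ enters the coefficients of $F,G$ only through the moduli $\omega_i(\tau\,\cdot)\le\omega_i(\cdot)$, which can only help. Granting this, the estimate for $\sup_{B_{\tau R}(z_0)}|(u,v)|$ is genuinely uniform in $\tau$ and the proof goes through; everything else is bookkeeping, which is why the statement is an immediate consequence of Corollary~\ref{Se6:coro1}.
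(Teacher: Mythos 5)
Your proposal is correct and follows the route the paper intends: the paper states this result as ``an immediate consequence of Corollary~\ref{Se6:coro1}'' and gives no separate argument, and your proof is precisely the bookkeeping behind that assertion — the scaling identity $|(u_\tau,v_\tau)|(x)=\tau^{-2/\kappa}|(u,v)|(z_0+\tau x)$, the uniform (in $\tau$) two-sided bound from Corollary~\ref{Se6:coro1}, and passage to the limit via local uniform convergence. You also correctly flag that the paper's notation must be read as a supremum over balls (or spheres) centred at the origin, since a pointwise lower bound on $|(u_0,v_0)|(x)$ as $|x|\to\infty$ cannot follow from non-degeneracy alone (a blow-up limit whose dead core is, say, a half-space would violate it), which is a legitimate clarification of the theorem as stated.
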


\vspace{3mm}

  \section{Proof of Theorem \ref{Thm5}, Corollary \ref{Coro1} and applications}\label{Section 7}
In this section, we give an alternative proof of Theorem~\ref{Thm5} using the Harnack inequality rather than flatness estimate. Subsequently, the proof of Corollary \ref{Coro1} and some applications will be given.

\begin{proof}[{\bf Proof of Theorem~\ref{Thm5}}]
he idea of the proof is inspired by \cite[Theorem 1]{JSNS24} and references therein. Without loss of generality, we assume $ x_{0} = 0 $. We want to show that the existence of a $ C > 0 $, such that for all $ j \in \mathbb{N} $, there holds
\begin{equation}\label{Section9:eq1}
  \theta_{j+1} \leq  \max \big\{ C 2^{-\beta(j+1)}, 2^{-\beta}\theta_{j}     \big\},
\end{equation}
where
\begin{equation}\label{Section9:eq2}
  \theta_{j} := \sup_{B_{2^{-j}}} u    \  \   \text{and} \  \ \beta := \frac{2+p+\alpha}{1+p-\mu}.
\end{equation}
We argue by contradiction, if \eqref{Section9:eq1} does not hold, i.e., for each $ k \in \mathbb{N} $, there exists $ j_{k} \in \mathbb{N} $ such that
\begin{equation}\label{Section9:eq3}
  \theta_{j_{k}+1} >  \max \big\{ k 2^{-\beta(j_{k}+1)}, 2^{-\beta}\theta_{j_{k}}     \big\}.
\end{equation}

Now we define a rescaled function $ v_{k}: B_{1} \rightarrow \mathbb{R} $ as follows:
\begin{equation*}
  v_{k}(x) = \frac{u(2^{-j_{k}}x)}{\theta_{j_{k}+1}},
\end{equation*}
then simple calculations yield that
\begin{equation}\label{Section9:eq4}
  0 \leq v_{k}(x) \leq 2^{\beta}; \ \  v_{k}(0) = 0; \ \ \sup_{\overline{B}_{1/2}} v_{k}(x) = 1.
\end{equation}
Furthermore, $ v_{k} $ is a viscosity solution to
\begin{equation}\label{Section9:eq5}
  |Dv_{k}|^{p} \widetilde{F}(D^{2}v_{k},x) = \widetilde{f}(|x|, v_{k}(x)) \ \ \text{in} \ \ B_{1},
\end{equation}
where
\begin{equation*}
 \widetilde{F}(D^{2}v_{k},x) = 2^{-2j_{k}} \theta_{j_{k+1}}^{-1} F(2^{2j_{k}} \theta_{j_{k+1}}, 2^{-j_{k}}x)
\end{equation*}
and
\begin{equation*}
  \widetilde{f}(|x|, v_{k}(x)) = 2^{-(p+2)j_{k}} \theta_{j_{k+1}}^{-(p+1)} f(2^{-j_{k}}|x|, \theta_{j_{k}+1}v_{k}(x)).
\end{equation*}
Here the operator $  \widetilde{F} $ still satisfies \eqref{1a} and \eqref{2a}.

Using \hyperref[Section1:ass5]{\bf (A5)}, we have
\begin{equation}\label{Section9:eq6}
  ||\widetilde{f}||_{L^{\infty}(B_{1})}  \leq \frac{C_{n}||f_{0}||_{L^{\infty}(B_{1})}}{k^{p+1-\mu}2^{(p+2+\alpha)(2+j_{k})}},
\end{equation}
then from \eqref{Section9:eq4}, \eqref{Section9:eq6}, and the Harnack inequality, Theorem \ref{Pre:Thm2.1}, it concludes
\begin{align*}
  1= \sup_{B_{1/2}} v_{k}(x)  & \leq C \bigg( \inf_{B_{1/2}} v_{k}(x)   + ||\widetilde{f}||_{L^{\infty}(B_{1})}^{\frac{1}{1+p}}
   \bigg)    \\
  &  \leq C \bigg(\frac{C_{n}||f_{0}||_{L^{\infty}(B_{1})}}{k^{p+1-\mu}2^{(p+2+\alpha)(2+j_{k})}}    \bigg)^{\frac{1}{1+p}} \rightarrow 0, \ \ \text{as} \ \ k \rightarrow \infty,
\end{align*}
which is a contradiction. Hence \eqref{Section9:eq1} holds.

For given $ r \in (0,1) $, then there exists $ j \in \mathbb{N} $ such that $ 2^{-j-1} \leq r  < 2^{-j}$. In the spirit of \eqref{Section9:eq1}, it follows that
\begin{equation*}
  \sup_{B_{r}} u \leq \sup_{B_{2^{-j}}} u \leq C 2^{-\beta j} = C 2^{-\beta(1+j)} 2^{\beta} \leq C r^{\beta} = Cr^{\frac{2+p+\alpha}{1+p-\mu}}.
\end{equation*}
We finish the proof of Theorem \ref{Thm5}.
\end{proof}

\begin{proof}[{\bf Proof of Corollary~\ref{Coro1}}]
The proof is analogous to the proof of Corollary \ref{Section4:coro1}, for the sake of completeness, we present it here. Let $ x_{0} \in \{u>0\} \cap B_{1/2}  $ be a point such that $ f(x,t) \simeq |x-\gamma(x_{0})|^{\alpha} t_{+}^{\mu} $, where $ \gamma:= \gamma(x_{0}) \in \partial \{u>0\} $ fulfills
\begin{equation*}
  |x_{0}-\gamma(x_{0})| = \text{dist}(x_{0},\partial \{u>0\}):= d.
\end{equation*}

Invoking Theorem \ref{Thm5}, we obtain
\begin{equation}\label{Section9:eq7}
  \sup_{B_{d}(x_{0})} u \leq \sup_{B_{2d}(\gamma)} u \leq C d^{\frac{2+p+\alpha}{1+p-\mu}}.
\end{equation}

Next we consider an auxiliary function $ v: B_{1} \rightarrow \mathbb{R} $ as follows:
\begin{equation*}
  v(x) = \frac{u(x_{0}+dx)}{d^{\frac{2+p+\alpha}{1+p-\mu}}}.
\end{equation*}
Direct computations yield that $ v $ is a viscosity solution to
\begin{equation*}
  |Dv|^{p} \widetilde{F}(D^{2}v,x) =  \widetilde{f}(|x|,v(x)),
\end{equation*}
where
\begin{equation*}
  \widetilde{F}(D^{2}v,x) = d^{-\frac{\alpha+2\mu-p}{1+p-\mu}} F(d^{\frac{\alpha+2\mu-p}{1+p-\mu}}D^{2}v(x), x_{0}+dx)
\end{equation*}
and
\begin{equation*}
  \widetilde{f}(|x|,v(x)):= d^{-\frac{\alpha(1+p)+\mu(2+p)}{1+p-\mu}} f(|x_{0}+dx|,u(x_{0}+dx)).
\end{equation*}
Here the operator $  \widetilde{F} $ still satisfies \eqref{1a} and \eqref{2a}.
Note that
\begin{equation*}
  \widetilde{f}(|x|,v(x)) \simeq  d^{-\frac{\alpha(1+p)+\mu(2+p)}{1+p-\mu}} d^{\alpha+\frac{\mu(2+p+\alpha)}{1+p-\mu}} |x|^{\alpha} v_{+}^{\mu} =  |x|^{\alpha} v_{+}^{\mu},
\end{equation*}
then from \eqref{Section9:eq7}, Lemma \ref{Section2:lem1} and Remark \ref{Sec2:rk2}, we have
\begin{equation*}
  \frac{|\nabla u(x_{0})|}{d^{\frac{1+\alpha+\mu}{1+p-\mu}}} = |\nabla v(0)| \leq \sup_{B_{1/2}} |\nabla v|  \leq C \big( \sup_{B_{1}} v + ||\widetilde{f}||_{L^{\infty}(B_{1})}^{\frac{1}{1+p}}  \big) \leq C,
\end{equation*}
which gives
\begin{equation*}
  |\nabla u(x_{0})| \leq C d^{\frac{1+\alpha+\mu}{1+p-\mu}}.
\end{equation*}\end{proof}

We conclude this section by giving some applications, i.e., Liouville-type results. The proof is just the same as that of Theorems \ref{Section7:Thm2} and \ref{Section7:Thm1}, and is omitted here.

\begin{Theorem}\label{Section8:thm1}
Under the hypotheses of Theorem \ref{Thm5}, let $ u $ be a non-negative bounded entire viscosity solution to
\begin{equation*}
  |Du|^{p} F(D^{2}u, x) = |x|^{\alpha} (u)_{+}^{\mu} \ \ \text{in} \ \ \mathbb{R}^{n}
\end{equation*}
with $ u(0) = 0 $. Assume that
\begin{equation*}
  \lim_{|x|\rightarrow \infty} \frac{u(x)}{|x|^{\frac{2+p+\alpha}{1+p-\mu}}} = 0,
\end{equation*}
then $ u \equiv 0 $ in $ \mathbb{R}^{n} $.
\end{Theorem}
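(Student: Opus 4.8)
The plan is to transcribe the blow-down argument of Theorem~\ref{Section7:Thm1} to the scalar setting, with $u$ in place of $|(u,v)|$ and with $\beta:=\frac{2+p+\alpha}{1+p-\mu}$ playing the role of the decay exponent. Suppose, towards a contradiction, that $u\not\equiv0$, so $u(y_0)>0$ for some $y_0$; since $u(0)=0$ and $u$ is continuous, $\partial\{u>0\}\neq\emptyset$, and I may assume $0\in\partial\{u>0\}$ (if $0$ lies in the interior of the dead core then $u$ vanishes on a whole ball around $0$ and the complementary case is handled analogously).

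First I would rescale: set $u_k(x):=k^{-\beta}u(kx)$ on $B_1$. Using the identity $\beta(1+p-\mu)=2+p+\alpha$, a direct computation shows that $u_k$ is a nonnegative viscosity solution of
\[
|Du_k|^{p}F_k(D^2u_k,x)=|x|^{\alpha}(u_k)_+^{\mu}\quad\text{in }B_1,\qquad F_k(M,x):=k^{2-\beta}F(k^{\beta-2}M,kx),
\]
and $F_k$ still satisfies \eqref{1a}--\eqref{2a} with the same $\lambda,\Lambda$; moreover $u_k(0)=0$ and, since $u$ is bounded and $\beta>0$, $\|u_k\|_{L^\infty(B_1)}=k^{-\beta}\|u\|_{L^\infty(B_k)}\to0$.

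Next I would show $u_k\to0$ locally uniformly. If $x_k\in\overline{B_r}$ realises $\sup_{\overline{B_r}}u_k$, then either $|kx_k|\to\infty$ along a subsequence, in which case $u_k(x_k)=|x_k|^{\beta}\,\frac{u(kx_k)}{|kx_k|^{\beta}}\to0$ by the hypothesis $\lim_{|x|\to\infty}u(x)/|x|^{\beta}=0$, or $|kx_k|$ stays bounded, in which case $u_k(x_k)\le k^{-\beta}\|u\|_{L^\infty}\to0$; hence $\sup_{\overline{B_r}}u_k\to0$ for every $r>0$. The last step closes the argument exactly as \eqref{Section7:eq3}--\eqref{Section7:eq4}: applying Theorem~\ref{Thm5} to $u_k$ at the free-boundary point $0$ (its reaction is precisely $|x|^{\alpha}(u_k)_+^{\mu}$, and the constant there is universal) together with the decay of $\sup_{B_{1/2}}u_k$ gives $\sup_{B_{1/2}}\frac{u_k(x)}{|x|^{\beta}}\to0$; since $\frac{u_k(x)}{|x|^{\beta}}=\frac{u(kx)}{|kx|^{\beta}}$, for all large $k$ one gets
\[
\frac{u(y_0)}{|y_0|^{\beta}}\le\sup_{B_{k/2}}\frac{u(z)}{|z|^{\beta}}=\sup_{B_{1/2}}\frac{u_k(x)}{|x|^{\beta}}\le\frac{u(y_0)}{40\,|y_0|^{\beta}},
\]
forcing $u(y_0)=0$, a contradiction; hence $u\equiv0$. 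An equivalent route, mirroring Theorem~\ref{Section7:Thm2}, is (when $F(0,\cdot)\equiv0$) to compare $u$ on $B_R$ with the explicit radial supersolution $A_*(|x|-\rho)_+^{\beta}$ built as in Section~\ref{Section 4}, with $A_*=[\Lambda(n+\beta-2)\beta^{1+p}]^{1/(\mu-1-p)}$ and $\rho=\rho(R):=R-(S_R/A_*)^{1/\beta}$, $S_R:=\sup_{\partial B_R}u$, so that $u\le A_*(|x|-\rho)_+^{\beta}$ on $\partial B_R$; Lemma~\ref{lemma24} then forces $u\equiv0$ on $B_{\rho(R)}$, and letting $R\to\infty$ (so $\rho(R)\to\infty$ by the decay hypothesis) yields $u\equiv0$.

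I expect the one genuinely delicate point to be the uniform-in-$k$ control of the normalised quotient $u_k(x)/|x|^{\beta}$ as $x\to0$: away from the origin the decay of $\sup_{B_{1/2}}u_k$ does the job, but near the origin one must exploit that the growth estimate of Theorem~\ref{Thm5} --- equivalently, the self-improving inequality~\eqref{Section9:eq1} --- carries a constant independent of the solution and of the particular uniformly elliptic operator $F_k$, so that it transfers uniformly along the whole blow-down sequence. In the comparison-based route the analogous subtlety is instead checking that $A_*(|x|-\rho)_+^{\beta}$ is indeed a supersolution of the weighted equation (this is where $\beta(1+p-\mu)=2+p+\alpha$ enters) and that the comparison of Lemma~\ref{lemma24} still applies in spite of the weight $|x|^{\alpha}$, which is singular at the origin when $\alpha<0$.
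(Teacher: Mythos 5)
Your transcription of the blow-down argument of Theorem~\ref{Section7:Thm1} is faithful, and the rescaling identities, the uniform ellipticity of $F_k$, and the locally uniform convergence $u_k\to 0$ are all correct. The gap is in the final step, and it is not merely a ``delicate point'': the quantity you want to send to zero,
\begin{equation*}
\sup_{B_{1/2}}\frac{u_k(x)}{|x|^{\beta}}=\sup_{z\in B_{k/2}}\frac{u(z)}{|z|^{\beta}},
\end{equation*}
is \emph{monotone nondecreasing} in $k$, so it cannot tend to $0$ unless $u\equiv0$ already. Theorem~\ref{Thm5} applied to $u_k$ only delivers $\sup_{B_r}u_k\le C_k r^{\beta}$ with $C_k\to C>0$ (for $\|u_k\|_{L^\infty(B_1)}\to0$ the constant stabilises at its universal value, it does not vanish; trace through \eqref{Section9:eq1} and note that the inductive bound from \eqref{Section9:eq1} is $\theta_j\le \max\{C,\theta_0\}2^{-\beta j}$). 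So you obtain the uniform bound $\sup_{B_{k/2}}u/|z|^{\beta}\le C$, which is not a contradiction with $u(y_0)>0$. As written the argument is circular; indeed, for $k\ge 2|y_0|$ the point $y_0/k$ lies in $B_{1/2}$ and the left-hand side is already $\ge u(y_0)/|y_0|^{\beta}$, so the inequality you aim for can never hold. (The same difficulty lurks in the paper's own Theorem~\ref{Section7:Thm1}, which Theorem~\ref{Section8:thm1} is proved by reference to, so you have faithfully inherited it; but faithful transcription is not validation.)

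Your second route, by comparison with the translated radial barrier, is the one that actually closes, but it needs more care than you allot it and it is not merely ``equivalent''. Two checks are genuinely required. First, Lemma~\ref{lemma24} must be applicable with $\lambda_0(x)=|x|^{\alpha}$, which is fine for $\alpha\ge0$ but singular at the origin when $\alpha<0$; one should at least note that the origin is strictly inside the contact set $\{u\equiv0\}$ so the argument can be run away from $0$. Second, and more seriously, $\Psi_\rho(x)=A_*(|x|-\rho)_+^{\beta}$ is a viscosity supersolution of $|D\Psi|^{p}F(D^2\Psi,x)=|x|^{\alpha}\Psi_+^{\mu}$ only when $\alpha\ge0$. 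Repeating the computation of Section~\ref{Section 4} with the identity $\beta(1+p-\mu)=2+p+\alpha$ one finds that the left-hand side is bounded above by a multiple of $(|x|-\rho)^{\alpha+\beta\mu}$, whereas the right-hand side is $|x|^{\alpha}(|x|-\rho)^{\beta\mu}$; since $|x|-\rho\le|x|$, the needed inequality $(|x|-\rho)^{\alpha}\le|x|^{\alpha}$ holds precisely when $\alpha\ge0$ and is reversed when $\alpha<0$. So for the Hardy range $-\mu\frac{2+p}{1+p}<\alpha<0$ this barrier is not a supersolution and the route breaks down unless you exhibit a modified barrier adapted to the singular weight. In short: the blow-down route as written fails, and the comparison route works under the extra hypotheses $F(0,\cdot)\equiv0$ and $\alpha\ge0$ but is not justified for $\alpha<0$.
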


\begin{Theorem}\label{Section8:thm2}
 Under the hypotheses of Theorem \ref{Thm5}, and \eqref{3a} also holds. Let $ u $ be a non-negative bounded entire viscosity solution to
\begin{equation*}
  |Du|^{p} F(D^{2}u, x) = |x|^{\alpha} (u)_{+}^{\mu} \ \ \text{in} \ \ \mathbb{R}^{n}.
\end{equation*}
Suppose that
\begin{equation*}
  \limsup_{|x|\rightarrow \infty}  \frac{u(x)}{|x|^{\frac{2+p+\alpha}{1+p-\mu}}} < C_{1}(n, p, \mu, \alpha, \Lambda),
\end{equation*}
where $ C_{1}(n, p, \mu, \alpha, \Lambda) $ is the constant in Example \ref{Intro:ex3} of Section \ref{Section9},
then $ u \equiv 0 $ in $ \mathbb{R}^{n} $.
\end{Theorem}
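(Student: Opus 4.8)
The plan is to mimic the barrier proof of Theorem~\ref{Section7:Thm2}, working with the scalar comparison principle Lemma~\ref{lemma24} in place of the weak comparison principle for systems (Lemma~\ref{Sec3:le1}), and to assume the stated conclusion is $u\equiv 0$. Set $\beta:=\frac{2+p+\alpha}{1+p-\mu}$; one first checks $\beta>1$, since $\alpha>-\mu\frac{2+p}{1+p}$ and $-\mu\frac{2+p}{1+p}>-1-\mu$, the last inequality being equivalent to $\mu<1+p$. The starting point is the radial super-solution of Example~\ref{Intro:ex3}: the function $w(x)=C_{1}|x|^{\beta}$ with $C_{1}=C_{1}(n,p,\mu,\alpha,\Lambda)$ satisfies $|Dw|^{p}F(D^{2}w,x)\le|x|^{\alpha}w_{+}^{\mu}$ in the viscosity sense. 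Because $\beta>1$, the truncated profile $\widetilde{u}_{\rho}(x):=C_{1}(|x|-\rho)_{+}^{\beta}$ is of class $C^{1}$ across the sphere $\partial B_{\rho}$, and, using $(|x|-\rho)^{\alpha}\le|x|^{\alpha}$ for $\alpha\ge 0$ in the annulus $B_{R}\setminus\overline{B_{\rho}}$ together with the trivial dead-core bound $f(|x|,0)\ge 0$ inside $B_{\rho}$, it remains a viscosity super-solution of \eqref{Intro:eq4} with $f(|x|,t)=|x|^{\alpha}t_{+}^{\mu}$; its dead-core set is exactly $\overline{B_{\rho}}$.

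Next I would fix $R>0$, put $S_{R}:=\sup_{\partial B_{R}}u$, and use the growth hypothesis to select $\theta\in(0,1)$ and $R_{0}>0$ with $R^{-\beta}S_{R}\le\theta C_{1}$ for all $R\ge R_{0}$. For such $R$ I set
\[
\rho=\rho(R):=R-(S_{R}/C_{1})^{1/\beta}\ \ge\ \big(1-\theta^{1/\beta}\big)R>0,
\]
so that $\rho(R)\to\infty$ as $R\to\infty$, and $\widetilde{u}_{\rho}=C_{1}(R-\rho)^{\beta}=S_{R}\ge u$ on $\partial B_{R}$. Applying Lemma~\ref{lemma24} on $B_{R}$ with $\lambda_{0}(x)=|x|^{\alpha}$ (continuous and non-negative there when $\alpha\ge 0$) and the increasing function $f(t)=t_{+}^{\mu}$ vanishing at $0$, with $u_{1}=\widetilde{u}_{\rho}$ the super-solution and $u_{2}=u$ the sub-solution, I obtain $\widetilde{u}_{\rho}\ge u$ throughout $B_{R}$.

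Finally, fixing any $x_{0}\in\mathbb{R}^{n}$ and taking $R$ large enough that $\rho(R)>|x_{0}|$, one gets $0\le u(x_{0})\le\widetilde{u}_{\rho}(x_{0})=0$; letting $R\to\infty$ yields $u(x_{0})=0$, whence $u\equiv0$, which is the intended conclusion of Theorem~\ref{Section8:thm2}. (The analogue of Theorem~\ref{Section7:Thm1} — namely Theorem~\ref{Section8:thm1} — is handled instead by the scaling and compactness argument, but here only the barrier version is needed.)

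The hard part will be the first step: verifying rigorously that the truncated profile $\widetilde{u}_{\rho}$ is a bona fide viscosity super-solution of \eqref{Intro:eq4} across the dead-core sphere $\partial B_{\rho}$ and up to $\partial B_{R}$, and in particular taming the weight $|x|^{\alpha}$. The inequality $(|x|-\rho)^{\alpha}\le|x|^{\alpha}$ that makes the annular computation go through forces $\alpha\ge 0$; for $-\mu\frac{2+p}{1+p}<\alpha<0$ the truncated power barrier degrades near $\partial B_{\rho}$ and one would need a different comparison function, exactly in the spirit of the restriction already recorded for Theorem~\ref{Section7:Thm2}. A secondary, routine point is to confirm that Lemma~\ref{lemma24} tolerates the possibly degenerate coefficient $\lambda_{0}(x)=|x|^{\alpha}$, which one sees on inspecting its proof.
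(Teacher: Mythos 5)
Your proof is correct and takes essentially the same barrier-plus-comparison route that the paper intends when it says, after Corollary~\ref{Coro1}, that ``the proof is just the same as that of Theorems \ref{Section7:Thm2} and \ref{Section7:Thm1}, and is omitted here''; your use of the radial super-solution of Example~\ref{Intro:ex3} in place of the pair $(A|x|^{\alpha},B|x|^{\beta})$, and of the scalar comparison Lemma~\ref{lemma24} in place of the system principle Lemma~\ref{Sec3:le1}, is exactly the expected adaptation, and the $C^{1}$-matching of the truncated cap across $\partial B_{\rho}$ (using $\beta>1$) handles the free-boundary sphere correctly since test functions touching there must have vanishing gradient and the viscosity definition only tests at nonzero gradients. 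You also rightly flag — where the paper stays silent — that the inequality $(|x|-\rho)^{\alpha}\le|x|^{\alpha}$ underlying the annular computation demands $\alpha\ge 0$, so that on the remaining range $-\mu\frac{2+p}{1+p}<\alpha<0$ allowed by Assumption~\ref{Section1:ass5} this particular comparison function does not suffice, a restriction analogous to the one the paper records for Theorem~\ref{Section7:Thm2}.
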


\vspace{3mm}

\section{Proof of Theorem \ref{Thm6} and Theorem \ref{Thm7}}\label{Section 8}

In this section, we will give the proof of Theorem \ref{Thm6} and Theorem \ref{Thm7}.

\begin{proof}[{\bf Proof of Theorem~\ref{Thm6}}]
For simplicity, here we only consider the special case $ f(|x|,u(x)) = |x|^{\alpha}(u)_{+}^{\mu} $, as the general case is similar. For each $ \epsilon >0 $, we first consider the following penalized problem
\begin{equation}\tag{$\mathbf{P}_{\epsilon}$}
\label{P1}
\left\{
     \begin{aligned}
     & |Du_{\epsilon}|^{p} F(D^{2}u_{\epsilon},x) = |x|^{\alpha}(u_{\epsilon})_{+}^{\mu}  +  \epsilon        \quad \ \mathrm{in} \ \   B_{1}         \\
     &  u_{\epsilon} =  u   \qquad \qquad \qquad \qquad \qquad \qquad  \ \  \   \mathrm{on} \ \ \partial B_{1}.                     \\
     \end{aligned}
     \right.
\end{equation}
Note that for each fixed $ \epsilon > 0 $, the existence of such a solution $ u_{\epsilon} $ to \eqref{P1} is guaranteed by the classial Perron's method. For simplicity, we denote $ f_{\epsilon}(x,t):= |x|^{\alpha}(u_{\epsilon})_{+}^{\mu}  +  \epsilon $. Clearly, $ f_{\epsilon} $ satisfies the condition $ (\mathrm{i})$ and $(\mathrm{ii})$ in Theorem \ref{Se2:Thm2.2}, hence, one can see that $ u_{\epsilon} $ is bounded. Observed that
\begin{equation}\label{Se81:1}
  H_{1} \leq u_{\epsilon_{1}} \leq u_{\epsilon_{2}} \leq \cdots \leq u_{\epsilon_{j}} \leq \cdots \leq u \leq H_{2}
\end{equation}
for every subsequence $ \{ \epsilon_{j}\}_{j \in \mathbb{N}} \subset [0,1] $ and $ \epsilon_{j} \rightarrow 0^{+}$, where $ H_{2} \in C^{0}(\overline{B_{1}}) $ satisfies
\begin{equation*}
\left\{
     \begin{aligned}
     & |DH_{2}|^{p} F(D^{2}H_{2},x) = 0   \quad \mathrm{in} \ \ B_{1}         \\
     &  H_{2}   =  u   \qquad \qquad \qquad \qquad \ \mathrm{on} \ \ \partial B_{1}                    \\
     \end{aligned}
     \right.
\end{equation*}
in the viscosity sense, and $ H_{1} \in C^{0}(\overline{B_{1}}) $ fulfills in the viscosity sense
\begin{equation*}
\left\{
     \begin{aligned}
     & |DH_{1}|^{p} F(D^{2}H_{1},x) = |x|^{\alpha}(H_{1})_{+}^{\mu} + 1   \quad \mathrm{in} \ \ B_{1}         \\
     &  H_{1}   =  u   \qquad \qquad \qquad \qquad \qquad \qquad \ \  \mathrm{on} \ \ \partial B_{1}.                          \\
     \end{aligned}
     \right.
\end{equation*}
Next we will establish a non-degeneracy estimate of solution $ u_{\epsilon} $ to \eqref{P1}(uniformly in $ \epsilon$). To achieve this, by continuity, without loss of generality, we assume that $ x_{0} = 0 \in  \{ u > 0 \} \cap B_{1}   $. Then we consider the radial function
\begin{equation*}
  \Phi(x):= \bigg[\frac{(1+p-\mu)^{2+p}}{\Lambda(2+p+\alpha)^{1+p}[n(1+p-\mu)+(2\mu+\alpha-p)]}            \bigg]^{\frac{1}{1+p-\mu}} |x|^{{\frac{2+p+\alpha}{1+p-\mu}}}.
\end{equation*}
By the analysis in Example \ref{Intro:ex3} of Section \ref{Section9}, it can be seen that the function $ \Phi $ is a viscosity super-solution to
\begin{equation*}
  |D \Phi|^{p} F(D^{2}\Phi,x) = |x|^{\alpha} \Phi_{+}^{\mu}  \quad \mathrm{in}  \quad  B_{1}.
\end{equation*}
We claim that there exists $ z_{r} \in \partial B_{r} $ such that $ u_{\epsilon}(z_{r}) > \Phi(z_{r}) $. If suppose not, by comparison principle(Lemma \ref{Se2:lemma7}), $ u_{\epsilon} \leq \Phi $ in $ B_{r} $. However, this leads a contradiction since $ u_{\epsilon}(0) > 0 = \Phi(0) $.

Consequently, we have the following estimate
\begin{align}\label{Se81:2}
\begin{split}
\sup_{\partial B_{r}} u_{\epsilon} & \geq u_{\epsilon}(z_{r})   \\
& > \Phi(z_{r})= \bigg[\frac{(1+p-\mu)^{2+p}}{\Lambda(2+p+\alpha)^{1+p}[n(1+p-\mu)+(2\mu+\alpha-p)]}            \bigg]^{\frac{1}{1+p-\mu}} r^{{\frac{2+p+\alpha}{1+p-\mu}}}.
\end{split}
\end{align}

Finally, using \eqref{Se81:1}, it gives $ u_{\epsilon_{j}} \leq u $. Then taking the supremum and using \eqref{Se81:2}, we obtain
\begin{equation*}
  \sup_{\partial B_{r}} u \geq \sup_{\partial B_{r}} u_{\epsilon} \geq \bigg[\frac{(1+p-\mu)^{2+p}}{\Lambda(2+p+\alpha)^{1+p}[n(1+p-\mu)+(2\mu+\alpha-p)]}            \bigg]^{\frac{1}{1+p-\mu}} r^{{\frac{2+p+\alpha}{1+p-\mu}}}.
\end{equation*}
\end{proof}

\begin{proof}[{\bf Proof of Theorem~\ref{Thm7}}] The idea of the proof is inspired by \cite[Theorem 5]{JSNS24}. Consider an open set $ B^{+} = \{ x \in B_{1}| u(x) >0  \} $. If $ B^{+} $ is nonempty, we get that
\begin{equation*}
  |Du|^{p} F(D^{2}u,x) \geq 0 \quad \text{in} \quad B^{+}.
\end{equation*}
From Lemma \ref{lemma25}, maximum principle(\cite[Theorem 3.3]{CIL92}), and since there exists $ x_{0} \in B_{1} $ such that $ u(x_{0}) = 0 $, it leads to
\begin{equation*}
  u(x) \leq \sup_{B^{+}} u = \sup_{\partial B^{+}} u = 0,
\end{equation*}
which contradicts the definition of $ B^{+} $.

Analogously, consider an open set $ B^{-} = \{ x \in B_{1}| u(x) < 0  \} $, and assume that $ B^{-} $ is nonempty, then
\begin{equation*}
  |Du|^{p} F(D^{2}u,x) \leq 0 \quad \text{in} \quad B^{-}.
\end{equation*}

As before, by Lemma \ref{lemma25}, maximum principle (\cite[Theorem 3.3]{CIL92}), and since there exists $ x_{0} \in B_{1} $ such that $ u(x_{0}) = 0 $, we derive that
\begin{equation*}
  u(x) \geq \inf_{B^{-}} u = \inf_{\partial B^{-}} u = 0,
\end{equation*}
which also contradicts the definition of $ B^{-} $. Consequently, we finish the proof of Theorem~\ref{Thm7}.
\end{proof}

\vspace{3mm}

\section{Examples and remarks}\label{Section9}

In this section, we give some examples and remarks.
\begin{Example}
  A natural extension of our results arises when considering equations given by
\begin{equation*}\label{Section8:eq1}
\left\{
     \begin{aligned}
     &  |Du|^{p} F(D^{2}u,x) =  f(v) \ \ \text{in} \ \ B_{1}   \\
     &  |Dv|^{q} G(D^{2}v,x) = g(u) \ \ \text{in} \ \ B_{1},        \\
     \end{aligned}
     \right.
\end{equation*}
where $ f,g \in C^{0}(\mathbb{R}^{+}) $ and
\begin{equation}\label{Appendix: eq1}
0 \leq f(\delta t) \leq M_{1} \delta^{\lambda_{1}} f(t) \ \ \text{and} \ \ 0 \leq g(\delta t) \leq M_{2} \delta^{\lambda_{2}} g(t),
\end{equation}
with $ M_{1}, M_{2} >0 $, $ 0 \leq   p, q < \infty $, $ \lambda_{1}\lambda_{2}  < (1+p)(1+q) $, $ t >0 $ bounded, and $ \delta > 0 $ small enough. In addition, we assume that
\begin{equation}\label{Appendix: eq2}
f,g \ \text{are non-decreasing}.
\end{equation}

{\bf $ (\mathrm{I}) $.} The results obtained in Sections \ref{Section 3}--\ref{Section 6} remain true, without changes the proof even when $ f, g $ have some bounded coefficients, namely,
\begin{equation*}
\left\{
     \begin{aligned}
     &  |Du|^{p} F(D^{2}u,x) =  f(x, v) \ \ \text{in} \ \ B_{1}   \\
     &  |Dv|^{q} G(D^{2}v,x) = g(x, u) \ \ \text{in} \ \ B_{1},        \\
     \end{aligned}
     \right.
\end{equation*}
where $ f, g $ still satisfy some properties \eqref{Appendix: eq1}--\eqref{Appendix: eq2}.

\vspace{1mm}

{\bf $ (\mathrm{II}) $.} Note that the case $ f(v) = v_{+}^{\lambda_{1}} $, $ g(u) = u_{+}^{\lambda_{2}}    $ corresponds \eqref{DCP}. In addition, $ f, g $ also contain some interesting models, for example,

$ \bullet $ $ f(v) = e^{v}-1 $ and $ g(u) = e^{u} -1 $ with $ M_{1} = M_{2} = 1 $ and $ \lambda_{1} = \lambda_{2} = 0 $;

$ \bullet $ $ f(v) = \log(1+v^{3}) $ and $ g(u) = \log(1+u^{3})  $ with $ M_{1} = M_{2} = 1 $ and $ \lambda_{1} = \lambda_{2} = 0 $,
see \cite[Section 7]{DT20} for more details.
\end{Example}

\begin{Example}
\label{example0}
A classical model is the following:
\begin{equation*}
  |Du|^{p} F(D^{2}u,x) = \sum_{i=1}^{l} c_{i} |x|^{\alpha_{i}} (u)_{+}^{\mu_{i}}(x) \ \ \text{in} \ \ B_{1},
\end{equation*}
where
\begin{equation*}
  c_{i} \geq 0, \ 0 \leq \mu_{i} < \infty \ \ \text{and} \ \ 0< \alpha_{i} < \infty  \ \ \text{for} \ \ 1 \leq i \leq l.
\end{equation*}
In this case, $ f(x, u) = \sum_{i=1}^{l} c_{i} |x|^{\alpha_{i}} (u)_{+}^{\mu_{i}}(x)$ satisfies \hyperref[Section1:ass5]{\bf (A5)} as follows:
\begin{align*}
  & |f(r|x|, su)| \leq  r^{\min_{1\leq i \leq l}\{\alpha_{i}\}} s^{\min_{1\leq i \leq l}\{\mu_{i}\}} ||f_{0}||_{L^{\infty}(B_{1})}   \\
  & \big[resp. |f(r|x|, su)| \leq  r^{\max_{1\leq i \leq l}\{\alpha_{i}\}} s^{\max_{1\leq i \leq l}\{\mu_{i}\}} ||f_{0}||_{L^{\infty}(B_{1})} \big],
\end{align*}
where
\begin{equation*}
  ||f_{0}||_{L^{\infty}(B_{1})} := \sup_{\overline{B}_{1}} |f(|x|, u(x))|.
\end{equation*}
\end{Example}

\begin{Example}\label{example1}
Consider
\begin{equation*}
  u(x_{1}, x_{2},\cdots, x_{n}) = \bigg[ \frac{(1+p-\mu)^{2+p}}{ [(n-2)(1+p-\mu)+(2+p+\alpha)]    (p+2+\alpha)^{1+p}}  \bigg] |x_{i}|^{\frac{2+p+\alpha}{1+p-\mu}}, \ \ x \in B_{1}.
\end{equation*}
Simple computations yield that $ u $ satisfies
\begin{equation*}
|Du|^{p} \Delta u =   \left\{
     \begin{aligned}
     &  |x_{i}|^{\alpha}(u)_{+}^{\mu}  \lesssim |x|^{\alpha}(u)_{+}^{\mu}    \ \  \text{in}   \ \  B_{1},  \ \ \text{if}   \ \   \alpha, \mu \neq 0,          \\
     &  |x_{i}|^{\alpha} \lesssim |x|^{\alpha}  \qquad \qquad \ \  \text{in} \ \ B_{1}, \ \ \text{if} \ \ \alpha \neq 0,  \mu = 0,     \\
     & (u)_{+}^{\mu} \qquad \qquad  \qquad  \ \ \ \   \text{in} \ \ B_{1}, \ \ \text{if} \ \  \alpha = 0, \mu \neq 0,   \\
     & 1 \qquad \qquad \qquad \qquad \ \ \   \text{in} \ \ B_{1}, \ \ \text{if} \ \  \alpha, \mu = 0.
     \end{aligned}
     \right.
\end{equation*}

In particular, we have that $ u \in C^{1,\frac{1+\alpha+\mu}{1+p-\mu}}_{\mathrm{loc}}(B_ {1}) $. Furthermore, if the condition $ 0 < \alpha < \infty $ in \hyperref[Section1:ass5]{\bf (A5)} also holds, then we have $ 1+ \alpha + \mu > 0 $.
\end{Example}

\begin{remark}\label{rk91}
It is essential to highlight some consequences arising from Example \ref{example1}. Let $ \tau = \frac{1+\alpha+\mu}{1+p-\mu} $, then we note that at critical point $ 0 \in \mathcal{S}_{u}(B_{1})$, the following relationships hold:
\begin{equation*}
 \left\{
     \begin{aligned}
     & \tau \in (0,1) \ \ \Leftrightarrow \ \ \alpha < p-2\mu \ \ \Rightarrow \ \ u \ \text{belongs to} \ C^{1,\tau} \ \text{at the origin},                \\
     & \tau =1  \qquad \ \Leftrightarrow \ \ \alpha = p-2\mu \ \ \Rightarrow \ \ u \ \text{exhibits a quadratic decay at the origin},                 \\
     & \tau >1 \qquad  \  \Leftrightarrow \ \  \alpha >  p-2\mu \ \ \Rightarrow \ \ u \ \text{exhibits a super-quadratic decay at the origin}.
     \end{aligned}
     \right.
\end{equation*}

In particular, viscosity solutions enjoy classical estimates along the set of critical points $ \mathcal{S}_{u}(B_{1}) $, provided $ \alpha > p-2\mu $.

\end{remark}

\vspace{2mm}

\begin{Example}\label{example2}
A general model is formulated as follows:
\begin{equation}\label{Appendix: eq3}
  |Du|^{p} F(D^{2}u, x) = \sum_{i=1}^{N} \mathrm{dist}^{\alpha_{i}}(x, G_{i})(u)_{+}^{\mu_{i}}  +  g_{i}(|x|) \ \ \text{in} \ \ B_{1},
\end{equation}
where $ G_{i} \subset B_{1} $ are disjoint closed sets, $ \mu_{i} \in [0, 1+p)$, $ 0 < \alpha_{i} < \infty  $ and
\begin{equation*}
  \limsup_{|x|\rightarrow 0} \frac{g_{i}(|x|)}{\mathrm{dist}^{l_{i}}(x, G_{i})} = L_{i} \in [0, \infty) \ \ \text{for some} \ \ l_{i} \geq 0.
\end{equation*}
Then viscosity solution to problem \eqref{Appendix: eq3} belongs to $ C_{\mathrm{loc}}^{\min_{1\leq i \leq N}\big\{\frac{2+p+\alpha_{i}}{1+p-\mu_{i}}, \frac{2+p+l_{i}}{1+p}  \big\}}$ along the sets $ \bigcap_{i=1}^{N}G_{i} \cap \mathcal{S}_{u}(B_{1}) $, where $ \mathcal{S}_{u}(B_{1}) = \{ x \in B_{1}: u(x)=|Du| = 0\} $.
\end{Example}

\begin{Example}[{\bf Radial Scenario}]
\label{Intro:ex3}
Consider the function $ \Phi: B_{1} \rightarrow \mathbb{R} $ given by
\begin{equation*}
  \Phi(x) = C_{1}(n, p, \mu, \alpha, \Lambda) |x|^{{\frac{2+p+\alpha}{1+p-\mu}}},
\end{equation*}
where $ C_{1}(n, p, \mu, \alpha, \Lambda) $ is a positive constant to be determined.

Direct calculations yield that
\begin{equation*}
  D\Phi(x) = \frac{2+p+\alpha}{1+p-\mu} C_{1} |x|^{\frac{1+\alpha+\mu}{1+p-\mu}}\frac{x}{|x|},
\end{equation*}
and
\begin{equation*}
  D^{2} \Phi(x) = C_{1}\frac{2+p+\alpha}{1+p-\mu} \bigg[ \frac{1+\alpha+\mu}{1+p-\mu} |x|^{\frac{2\mu+\alpha-p}{1+p-\mu}}\frac{x\otimes x}{|x|^{2}}   + |x|^{\frac{2\mu+\alpha-p}{1+p-\mu}} \big(\textbf{I}d_{n}-\frac{x\otimes x}{|x|^{2}}\big)   \bigg].
\end{equation*}
By \eqref{1a} and \eqref{3a}, we have that
\begin{equation*}
  F(D^{2}\Phi, x) \leq C_{1} \Lambda \frac{(p+2+\alpha)\big[n(1+p-\mu)+ (2\mu+\alpha-p)  \big]}{(1+p-\mu)^{2}} |x|^{\frac{\alpha+2\mu-p}{1+p-\mu}}.
\end{equation*}
Thus, if we choose
\begin{equation*}
  C_{1}(n, p, \mu, \alpha, \Lambda):= \bigg[\frac{(1+p-\mu)^{2+p}}{\Lambda(2+p+\alpha)^{1+p}[n(1+p-\mu)+(2\mu+\alpha-p)]}            \bigg]^{\frac{1}{1+p-\mu}},
\end{equation*}
it follows that $ \Phi $ is a viscosity super-solution to
\begin{equation*}
  |D\Phi|^{p} F(D^{2}\Phi, x) = |x|^{\alpha}(\Phi)_{+}^{\mu}  \ \ \text{in} \ \ B_{1}.
\end{equation*}
Particularly, it deduces that $ \Phi $ belongs to $ C^{1, \frac{1+\alpha+\mu}{1+p-\mu}}_{\mathrm{loc}}(B_{1})  $.
\end{Example}

\begin{remark}
The method of Theorem \ref{Thm5} also be applicable to
\begin{equation}\label{Appendix: eq5}
|Du|^{\gamma} \Delta_{p}^{\rm{N}} u = |x|^{\alpha} (u)_{+}^{\mu} \ \ \text{in} \ \ B_{1},
\end{equation}
where $ -1 < \gamma < \infty $, $ 1 < p < \infty $, $ 0 \leq \mu < 1+\gamma $ and $ 0 < \alpha < \infty $. The viscosity solutions to problem \eqref{Appendix: eq5} should belong to the class $ C^{\frac{2+\gamma+\alpha}{1+\gamma-\mu}}_{\mathrm{loc}} $ along the critical set $ \mathcal{S}(u) $, see \cite[Theorem 6.1]{AdaS25} for further details.
\end{remark}

\begin{remark}
It is natural to consider a more general form of the degeneracy law:
\begin{equation*}
  \mathcal{H}_{i}: B_{1} \times \mathbb{R}^{n} \rightarrow \mathbb{R}, \ \ i=1,2,
\end{equation*}
satisfying for some constants $ c_{0}, c_{1} > 0 $,
\begin{equation*}
  c_{0}|\xi|^{p_{i}} \leq \mathcal{H}_{i}(x,\xi) \leq c_{1} |\xi|^{p_{i}}, \ \ \forall (x,\xi) \in B_{1} \times \mathbb{R}^{n}.
\end{equation*}
\end{remark}

\appendix

\section{Proof of Theorem \ref{Se2:Thm2.2}}
\label{Appendix A}

In this appendix, we are devoted to presenting the proof of Theorem \ref{Se2:Thm2.2}.   

\begin{proof}[{\bf Proof of Theorem \ref{Se2:Thm2.2}}]
The idea of the proof is inspired by \cite[Theorem 5.3]{BM12} and references therein. We begin with condition $ (\mathrm{i}) $ to derive an upper bound. Choose an $ \epsilon >0 $ (to be determined later). From condition $ (\mathrm{i}) $, there exists $ t_{\alpha} > 0 $, depending on $ f, p $ and $ \epsilon $, such that
\begin{equation}\label{Appendix:eq1}
  f(x,t) \geq - \epsilon t^{1+p},  \quad   (x,t) \in B_{1} \times [t_{\alpha}, \infty).
\end{equation}

For a given viscosity solution $ u \in C(\overline{B}_{1}) $ to \eqref{Se2:D-BVP}, we consider the following open subset of $ B_{1} $
\begin{equation*}
 \widehat{B}_{\alpha}:= \{x \in B_{1}: u(x) > t_{\alpha}   \}.
\end{equation*}

Now we proceed by distinguishing two cases.

\textbf{Case 1.} If $ \widehat{B}_{\alpha} = \emptyset $, then the conclusion holds in this case.

\textbf{Case 2.} If $ \widehat{B}_{\alpha} \neq \emptyset $, without loss of generality, we assume $ t_{\alpha} \geq \sup_{\partial B_{1}} g $. Note that $ t_{\alpha} < M:= \sup_{B_{1}} u $. Let $ v \in C(\overline{B}_{1}) $ be a viscosity solution of the following Dirichlet problem:
\begin{equation}
\label{Appendix:eq2}
\left\{
     \begin{aligned}
     & |Dv|^{p} F(D^{2}v,x) = - \epsilon M^{1+p}   \quad \mathrm{in} \ \ B_{1}          \\
     &  v = t_{\alpha}   \quad \quad \quad \quad \quad \quad \quad \quad \quad \quad  \mathrm{on}   \ \ \partial B_{1}.        \\
     \end{aligned}
     \right.
\end{equation}
By the comparison principle (Lemma \ref{Sec2:lemma4}), we have $ v \geq t_{\alpha} $ in $ B_{1} $. Moreover, applying the ABP estimate (\cite[Theorem 1]{GPA09}) yields
\begin{equation}
\label{Appendix:eq3}
\sup_{B_{1}} v \leq C(t_{\alpha}+ \epsilon^{\frac{1}{1+p}}M),
\end{equation}
where $ C > 0 $ depends only on $ n, \lambda, \Lambda $ and $ p $. Next, consider the following Dirichlet problem in subdomain $ \widehat{B}_{\alpha} $:
 \begin{equation*}
\left\{
     \begin{aligned}
     & |Du|^{p} F(D^{2}u,x) = f(x,u)   \quad \mathrm{in} \ \ \widehat{B}_{\alpha}          \\
     &  u = t_{\alpha}   \quad \quad \quad \quad \quad \quad \quad \quad \quad   \mathrm{on}   \ \ \partial \widehat{B}_{\alpha},        \\
     \end{aligned}
     \right.
\end{equation*}
where $ f(x,u) \geq - \epsilon u^{1+p} \geq - \epsilon M^{1+p} $. Applying the comparison principle (Lemma \ref{Sec2:lemma4}) once more gives $ u \leq v $ in $ \widehat{B}_{\alpha} $, and hence
\begin{equation}
\label{Appendix:eq4}
  M \leq \sup_{B_{1}} v.
\end{equation}

Now we combine \eqref{Appendix:eq3} and \eqref{Appendix:eq4} to obtain
\begin{equation}\label{Appendix:eq5}
  M \leq \sup_{B_{1}} v \leq C(t_{\alpha}+ \epsilon^{\frac{1}{1+p}}M).
\end{equation}
By choosing $ \epsilon = (\frac{2t_{\alpha}}{M})^{1+p} $, then \eqref{Appendix:eq5} leads to
\begin{equation*}
  u \leq M \leq 3Ct_{\alpha}.
\end{equation*}
Hence the desired estimate follows in this case. For condition $ (\mathrm{ii}) $, a similar argument gives a corresponding lower bound $ u \geq 3Ct_{\alpha} $ in $ B_{1} $. This completes the proof of Theorem \ref{Se2:Thm2.2}.
\end{proof}

\noindent {\bf Acknowledgments} The authors express their gratitude to Prof. Eduardo V. Teixeira for his interest in this paper. They also sincerely thank the anonymous referees for their valuable and insightful comments, which have greatly improved the quality of the manuscript.

\vspace{2mm}

\noindent {\bf Author Contributions} We declare that all authors have reviewed and contributed equally to this manuscript.

\vspace{2mm}

\noindent {\bf Funding} This work was supported by the National Natural Science Foundation of China (No. 12271093) and the Jiangsu Provincial Scientific Research Center of Applied Mathematics (Grant No. BK20233002). The second author is funded by Shanghai Institute for Mathematics and Interdisciplinary Sciences (SIMIS) under grant number SIMIS-ID-2025-AD.

\vspace{2mm}

\noindent {\bf Data Availability} No datasets were generated or analyzed during the current study.

\vspace{2mm}

\noindent {\bf \Large Declarations}

\vspace{2mm}

\noindent {\bf Competing Interests} The authors declare no competing interests.

\vspace{2mm}

\noindent {\bf Ethical Approval} Not applicable

\vspace{2mm}

\end{sloppypar}
\end{document}